\newcommand{\eps}{\varepsilon}
\renewcommand{\Pr}{\mathbb{P}}
\newcommand{\bN}{\mathbb{N}}
\newcommand{\E}{\mathbb{E}}
\newcommand{\cA}{\mathcal{A}}
\newcommand{\cB}{\mathcal{B}}
\newcommand{\cG}{\mathcal{G}}
\newcommand{\cE}{\mathcal{E}}
\newcommand{\cF}{\mathcal{F}}
\newcommand{\cT}{\mathcal{T}}
\newcommand{\TV}{\mathrm{d}_{\mathrm{TV}}}
\DeclareMathOperator{\var}{Var}
\newcommand{\T}{\mathcal{U}_{n,m}}
\newcommand\lrpar[1]{\left(#1\right)}
\newcommand{\epsp}{\zeta} 
\newcommand\abs[1]{\left|#1\right|} 
\newcommand\floor[1]{\left\lfloor #1 \right \rfloor}
\newcommand\ceil[1]{\left\lceil #1 \right \rceil}
\newcommand{\xone}{x^{(1)}}
\newcommand{\xonetwo}{x^{(i)}}
\newcommand{\xtwo}{x^{(2)}}
\newcommand{\xot}{x^{(0)}}
\newcommand{\Xonetwo}{X^{(i)}}
\newcommand{\Xone}{X^{(1)}}
\newcommand{\Xtwo}{X^{(2)}}
\newcommand{\ionetwo}{I^{(i)}}
\newcommand{\ione}{I^{(1)}}
\newcommand{\itwo}{I^{(2)}}
\newcommand{\nN}{n_N}
\newcommand{\eve}{\mathcal{B}}
\newcommand\Var{\operatorname{Var}} 
\newcommand{\isub}{\sqsubseteq}
\newcommand{\Aut}{\mathrm{Aut}}
\renewcommand{\u}{u}
\renewcommand{\v}{v}
\newcommand{\w}{w}
\newcommand{\ps}{\hat{p}}
\newcommand{\pz}{p^*}
\DeclareMathOperator{\pr}{\mathbb{P}}
\newtheorem{thm}{Theorem}
\newtheorem{lemma}[thm]{Lemma}
\newtheorem{cor}[thm]{Corollary}
\newtheorem{remark}{Remark}
\newtheorem{problem}{Problem}
\newtheorem*{lemma*}{Lemma}
\newcommand\xpar[1]{(#1)}
\newcommand\bigpar[1]{\bigl(#1\bigr)}
\newcommand\Bigpar[1]{\Bigl(#1\Bigr)}
\newcommand\biggpar[1]{\biggl(#1\biggr)}
\newcommand\Biggpar[1]{\Biggl(#1\Biggr)}
\newcommand\bigsqpar[1]{\bigl[#1\bigr]}
\newcommand\Bigsqpar[1]{\Bigl[#1\Bigr]}
\newcommand\biggsqpar[1]{\biggl[#1\biggr]}
\newcommand\Biggsqpar[1]{\Biggl[#1\Biggr]}
\newcommand\lrsqpar[1]{\left[#1\right]}
\newcommand\bigcpar[1]{\bigl\{#1\bigr\}}
\newcommand\Bigcpar[1]{\Bigl\{#1\Bigr\}}
\newcommand\biggcpar[1]{\biggl\{#1\biggr\}}
\newcommand\Biggcpar[1]{\Biggl\{#1\Biggr\}}
\newcommand\bigabs[1]{\bigl|#1\bigr|}
\newcommand\Bigabs[1]{\Bigl|#1\Bigr|}
\newcommand\biggabs[1]{\biggl|#1\biggr|}
\newcommand\bigfloor[1]{\bigl\lfloor#1\bigr\rfloor}
\newcommand{\indic}[1]{\mathbbm{1}_{\{{#1}\}}}
\newcommand{\refT}[1]{Theorem~\ref{#1}}
\newcommand{\refL}[1]{Lemma~\ref{#1}}
\newcommand{\refR}[1]{Remark~\ref{#1}}
\newcommand{\refS}[1]{Section~\ref{#1}}
\newcommand{\refA}[1]{Appendix~\ref{#1}}
\newcommand\dto{\overset{\mathrm{d}}{\to}}
\newcommand\Po{\mathrm{Po}}
\newcommand\Nor{\mathrm{N}}
\newcommand\TNor{\mathrm{SN}}
\newcommand\noproof{\qed}
\newenvironment{romenumerate}{
\vspace{-0.25em}\begin{enumerate}
\itemsep0pt \parskip0pt \parsep0pt%
 }{\vspace{-0.05em}\end{enumerate}\vspace{-0.05em}}
 \def\@currentlabel{(ii)}\label{enum:contain:main} 
 \def\@currentlabel{(ii)}\label{enum:contain:dist:other} 
\let\OLDthebibliography\thebibliography
\renewcommand\thebibliography[1]{
  \OLDthebibliography{#1}
  \setlength{\parskip}{0pt}
  \setlength{\itemsep}{0pt plus 0.3ex}
}
\title{Isomorphisms between dense random graphs}
\author{Erlang Surya\thanks{Department of Mathematics, University of California at San Diego, La Jolla, CA 92093, USA. Email: {\tt esurya@ucsd.edu}. Supported by NSF~CAREER grant~DMS-1945481.}
\and
Lutz Warnke\thanks{Department of Mathematics, University of California, San Diego, La Jolla, CA~92093, USA. 
E-mail: {\tt lwarnke@ucsd.edu}. 
Supported by NSF~CAREER grant~DMS-1945481, and a Sloan Research Fellowship.}
\and
Emily Zhu\thanks{Department of Mathematics, University of California, San Diego, La Jolla, CA~92093, USA. 
E-mail: {\tt e9zhu@ucsd.edu}. 
Supported by NSF~Graduate Research Fellowship Program grant~DGE-2038238, and a Sloan Research Fellowship.}}
\date{June~13, 2023; revised January~28, 2025}
\begin{document}

\maketitle

\begin{abstract}
We consider two variants of the induced subgraph isomorphism problem for two independent binomial random graphs with constant edge-probabilities~$p_1,p_2$.  
In particular, 
(i)~we prove a sharp threshold result for the appearance of~$G_{n,p_1}$ as an induced subgraph of~$G_{N,p_2}$, 
(ii)~we show two-point concentration of the size of the maximum common induced subgraph of~$G_{N, p_1}$ and~$G_{N,p_2}$,
and 
(iii)~we show that the number of induced copies of~$G_{n,p_1}$ in~$G_{N,p_2}$ has an unusual limiting~distribution.

These results confirm simulation-based predictions of McCreesh, Prosser, Solnon and~Trimble, and resolve several open problems of Chatterjee and~Diaconis.
The proofs are based on careful refinements of the first and second moment method, 
using extra twists to 
(a)~take some non-standard behaviors into account, and 
(b)~work around the large variance issues that prevent standard applications of these~methods. 
\end{abstract}

\section{Introduction}
%
Applied benchmark tests for the famous `subgraph isomorphism problem' 
empirically discovered interesting phase transitions in random graphs. 
More concretely, these phase transitions were observed in two induced variants of the `subgraph containment problem' widely-studied in random graph theory. 
In this paper we prove that the behavior of these two new random graph problems is surprisingly rich, 
with unexpected phenomena such as 
(a)~that the form of the answer changes for constant edge-probabilities, 
(b)~that the classical second moment method fails due to large variance, and 
(c)~that an unusual 
limiting distribution~arises. 

To add more context, in many applications such as pattern recognition, computer vision, biochemistry and molecular science, 
it is a fundamental problem to determine whether an induced copy of a given graph~$F$ (or a large part of a given graph~$F$) is contained in another graph~$G$; see~\cite{Cook1994,Raymond2002,Conte2004,Damiand2011,ehrlich2011maximum,Giugno2013,Bonnici2013,mccreesh2018subgraph}. 
In this paper we consider two probabilistic variants of this problem, 
where the two graphs~$F$ and~$G$ are both independent binomial random graphs with constant edge-probabilities~$p_1,p_2 \in (0,1)$. 
Our main results are~threefold:
\vspace{-0.25em}\begin{itemize}
\itemsep 0.125em \parskip 0em  \partopsep=0pt \parsep 0em 
	\item We prove a sharp threshold result for the appearance of~$G_{n,p_1}$ as an induced subgraph of~$G_{N,p_2}$, 
	and discover that the sharpness differs between the cases~$p_2=1/2$ and~$p_2 \neq 1/2$; see~Theorem~\ref{thm:contain} and~Corollary~\ref{cor:contain}.
	\item We show that the number of induced copies of~$G_{n,p_1}$ in~$G_{N,p_2}$ has a Poisson limiting distribution for~$p_2=1/2$, and a `squashed' log-normal limiting~distribution for~$p_2 \neq 1/2$; see~Theorem~\ref{thm:contain:dist}.
	\item We show two-point concentration 
	of the maximum common induced subgraph of~$G_{N, p_1}$ and~$G_{N,p_2}$, 
	and discover that the form of the maximum size changes as we vary~$p_1,p_2 \in (0,1)$; see~Theorem~\ref{thm:maincommon}.
\vspace{-0.25em}\end{itemize}
These results confirm simulation-based phase transition predictions of McCreesh, Prosser, Solnon and~Trimble~\cite{mccreesh2016subgraph,mccreesh2018subgraph}, 
and resolve several open problems of Chatterjee and Diaconis~\cite{chatterjee2021isomorphisms}. 
%
Our proofs 
are based on careful refinements of the first and second moment method, 
using several extra twists to 
(a)~take the non-standard phenomena into account, and 
(b)~work around the large variance issues that prevent standard applications of these moment based methods, 
using in particular pseudorandom properties and multi-round exposure~arguments to tame the variance.

\pagebreak[3]
\subsection{Induced subgraph isomorphism problem for random graphs}\label{sec:intro:contain}
In the \emph{induced subgraph isomorphism problem} the objective is to determine whether~$F$ is isomorphic to an induced subgraph of~$G$, 
i.e., whether~$G$ contains an induced copy of~$F$. 
In this paper we focus on this problem for two independent binomial random graphs~$F=G_{n,p_1}$ and~$G=G_{N,p_2}$, 
with constant edge-probabilities~${p_1,p_2 \in (0,1)}$ and~${n \ll N}$ many vertices. 
The motivation here is that, in applied work on benchmark tests for this NP-hard problem, it was empirically discovered~\cite{mccreesh2016subgraph,mccreesh2018subgraph} 
that such random graphs can be used to generate algorithmically hard problem-instances, 
leading to intriguing phase transitions. 
Knuth~\cite{SJPC,chatterjee2021isomorphisms} asked for mathematical explanations of these phase transitions, 
which are illustrated in Figure~\ref{fig:heatmap150} via containment probability phase-diagram plots.
The central points of these plots for~$p_1=p_2=1/2$ were resolved by Chatterjee and Diaconis~\cite{chatterjee2021isomorphisms} (and independently by Alon~\cite{Alon2017,Alon2023}), 
who emphasized in talks that the more interesting general case seems substantially more complicated. 
In this paper we resolve the general case 
with edge-probabilities~${(p_1,p_2) \in (0,1)^2}$: 
besides explaining the phase-diagram plots in~Figure~\ref{fig:heatmap150}, 
we discover that the non-uniform case~$p_2 \neq 1/2$ gives rise to new phenomena not anticipated by earlier work, 
including a different sharpness of the phase transition and an unusual `squashed' log-normal limiting~distribution. 

\begin{figure}
    \centering
    \includegraphics[width=\textwidth]{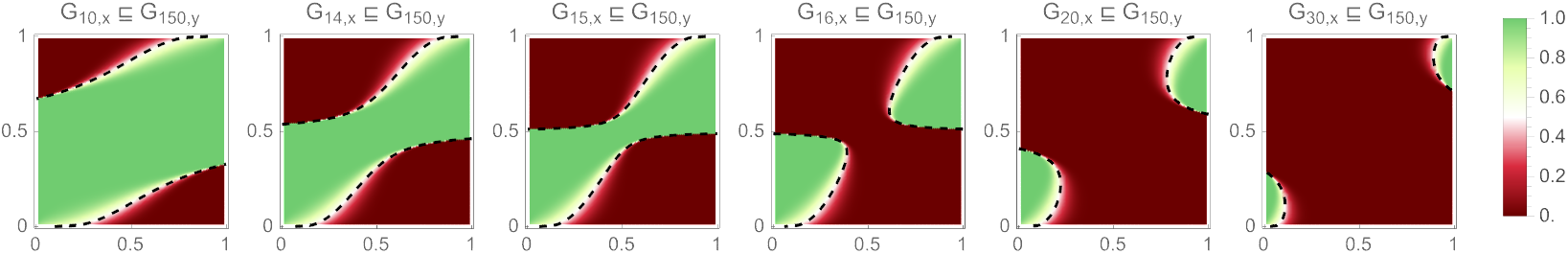}
    \caption{Theorem~\ref{thm:contain} establishes a sharp threshold around~$n^{*}=n^{*}(p_1,p_2,N):=2\log_a N + 1$ for the appearance of 
    the binomial random graph~$G_{n,p_1}$ as an induced subgraph of the independent random graph~$G_{N,p_2}$.
    It also yields the induced containment probability estimate~${\Pr(G_{n^{*}+c,p_1}\isub G_{N,p_2})\approx f_{p_1,p_2}(c)}$, which allows us to reproduce an idealized version of Figure~5 in~\cite{mccreesh2018subgraph}, where~$\Pr(G_{n,x}\isub G_{N,y})$ with~$N=150$ is empirically plotted for all~$x,y\in [0,1]$ and ${n\in \{10, 14, 15, 16, 20, 30\}}$; the dashed line corresponds to the threshold~$n^{*}$.
		Previous work~\cite{chatterjee2021isomorphisms} applied to the special case~$p_1=p_2=1/2$, i.e., only reproduced the central point in each~plot.%
\label{fig:heatmap150}}
\end{figure}

Our first result 
establishes a sharp threshold for the appearance of the binomial random graph~$G_{n,p_1}$ as an induced subgraph of the independent random graph~$G_{N,p_2}$, 
resolving an open problem of Chatterjee and Diaconis~\cite{chatterjee2021isomorphisms}. 
Below the abbreviation~$F\isub G$ means that~$G$ contains an induced copy of~$F$. 
%
\begin{thm}[Sharp threshold]\label{thm:contain}
Let~$p_1,p_2\in (0,1)$ be constants. Define $a := {1/\bigpar{p_2^{p_1}(1-p_2)^{1-p_1}}}$ and ${\varepsilon_N := (\log \log N)^2/\log N}$.
Then the following~holds, for independent binomial random graphs~$G_{n,p_1}$ and~$G_{N,p_2}$:%
\begin{romenumerate}
\item\label{enum:contain:12}
If~$p_2=1/2$, then~$a=2$ and
\begin{align}\vspace{-0.25em}
\label{eq:contain:12}
\lim_{N \to \infty}\Pr\bigpar{G_{n,p_1}\isub G_{N,p_2}} &= \begin{cases} 1 \ \ \ \  & \text{if $n \le 2\log_a N + 1-\varepsilon_N$,}\\ 0 \ \ \ \ & \text{if $n \ge 2\log_a N + 1+\varepsilon_N$.}\end{cases}\hspace{2.5em}\vspace{-0.25em}
\intertext{\item 
If~$p_2 \neq 1/2$, then} 
\label{eq:contain:main}
\lim_{N \to \infty}\Pr\bigpar{G_{n,p_1}\isub G_{N,p_2}} &= \begin{cases} 1 & \text{if $n-(2\log_a N+1) \to-\infty$,}\\ 
f(c) & \text{if $n-(2\log_a N+1)\to c \in (-\infty,+\infty)$,}\\
0 & \text{if $n-(2\log_a N+1) \to \infty$,}\end{cases}\hspace{2.5em}\vspace{-0.125em}%
\end{align}%
where~$f(c) = f_{p_1,p_2}(c):=\Pr\bigpar{\Nor(0,\sigma^2)\ge c}$ 
for a normal random variable~$\Nor(0,\sigma^2)$ with mean~$0$ and variance~$\sigma^2:= 
2p_1(1-p_1)  {\log^2_a(1/p_2-1)}$.
\end{romenumerate}\vspace{-0.125em}%
\end{thm}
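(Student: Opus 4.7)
The plan is to expose the source $F := G_{n,p_1}$ first and work conditionally on it, studying the count $X$ of induced copies of $F$ in the target $G := G_{N,p_2}$. Direct counting gives
\[
\E[X \mid F] \;=\; \frac{(N)_n}{|\Aut(F)|}\, p_2^{e(F)}(1-p_2)^{\binom{n}{2}-e(F)},
\]
and writing $e(F) = p_1\binom{n}{2}+\Delta$ together with the definition of $a$ yields
\[
\log \E[X\mid F] \;=\; \log\frac{(N)_n}{|\Aut(F)|} - \binom{n}{2}\log a + \Delta\,\log\tfrac{p_2}{1-p_2}.
\]
Since $|\Aut(F)|=1$ w.h.p., $\Delta$ is asymptotically $\Nor(0,p_1(1-p_1)\binom{n}{2})$ by the CLT, and $\binom{n}{2}\sim 2(\log N)^2/(\log a)^2$, plugging in $n=2\log_a N+1+c$ and collapsing the quadratic in $n$ reduces the deterministic part to $-c\log N + O(1)$ and gives
\[
\tfrac{1}{\log N}\log\E[X\mid F] \;\approx\; -c + Z, \qquad Z\sim \Nor(0,\sigma^2),
\]
with exactly $\sigma^2 = 2p_1(1-p_1)\log_a^2(1/p_2-1)$ after combining prefactors.

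\textbf{The two regimes.} When $p_2=1/2$ the factor $\log(p_2/(1-p_2))$ vanishes and there is no Gaussian contribution; the residual randomness in $\log\E[X\mid F]$ comes only from lower-order corrections (Stirling in $\log(N)_n$, the rare event $|\Aut(F)|\neq 1$, and tails of $\Delta$), each of which is $o((\log\log N)^2)$ with probability $1-o(1)$ by standard concentration, explaining the sharp window $\varepsilon_N=(\log\log N)^2/\log N$. When $p_2\neq 1/2$ the Gaussian survives, and the limiting probability $f(c)=\Pr(\Nor(0,\sigma^2)\ge c)$ arises by integrating conditional first- and second-moment bounds against the normal limit of $Z$ via
\[
\Pr(G_{n,p_1}\isub G_{N,p_2}) \;\to\; \Pr(Z>c) \;=\; f(c).
\]

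\textbf{Second moment and the main obstacle.} The $0$-direction uses $\Pr(X\ge 1\mid F)\le \min(1,\E[X\mid F])$, which after averaging over $F$ and invoking the CLT for $\Delta$ gives the required upper bounds on $\Pr(G_{n,p_1}\isub G_{N,p_2})$ in both regimes. The $1$-direction requires $X>0$ w.h.p.\ when $\E[X\mid F]\to\infty$, and a direct second-moment bound on $X$ fails because pairs of induced copies $(S,T)\subseteq V(G)$ sharing many vertices inflate $\E[X^2\mid F]$ far beyond $\E[X\mid F]^2$. To circumvent this, the plan is a two-round exposure of $G_{N,p_2}$: first expose a sparsified $G'\sim G_{N,p'_2}$ with $p'_2<p_2$ suitably tuned, and restrict attention to $n$-subsets $S$ whose $G'$-induced subgraph is \emph{pseudorandomly compatible} with $F$ (matching degree, codegree, and intersection statistics within prescribed tolerances); then expose the remaining independent edge-slots and attempt to complete induced copies on the surviving subsets. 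The filtered subcount $\widetilde X$ has the problematic highly-overlapping pairs pre-eliminated, and the aim is to show $\E[\widetilde X^2\mid F]\le (1+o(1))\E[\widetilde X\mid F]^2$ together with $\E[\widetilde X\mid F]=\Omega(\E[X\mid F])$, so that Chebyshev yields $X\ge \widetilde X>0$ w.h.p. The main technical difficulty is the simultaneous calibration of $p'_2$ and the pseudorandom tolerances: the filter must be strong enough to kill correlated overlaps yet loose enough that a useful fraction of candidate $S$ survives the first round, which is precisely the combinatorial heart of the argument.
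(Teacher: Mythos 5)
Your conditional first-moment setup, the CLT reduction for the edge deviation $\Delta$, and the resulting formula for $\sigma^2$ all match the paper and are correct; you also correctly identify that the $0$-direction follows from Markov plus the CLT, and that the genuine difficulty is $\Var(X\mid F)\gg \E[X\mid F]^2$ in a naive second-moment attempt. However, your proposed fix for the second-moment obstacle is not what the paper does, and I don't think it works as described.

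The paper's resolution is that the pseudorandomness lives entirely in $H=G_{n,p_1}$, not in a filtered/sparsified version of $G_{N,p_2}$. Concretely, the paper restricts to $H\in\cT_{n,m}=\cA_n\cap\cE_{n,m}$: every large induced subgraph $H[L]$ (with $|L|\ge n-n^{2/3}$) is asymmetric, and every $H[L]$ has $e(H[L])=\binom{|L|}{2}m/\binom{n}{2}\pm n^{2/3}(n-|L|)$ edges. These two deterministic properties of $H$ are exactly what tame the overlapping-pair terms $\E(I_R I_S)$ for $|R\cap S|=\ell$ large: asymmetry caps the number of embeddings of the second copy overlapping the first (the $|\Aut(H[L])|$ factor), and the edge-distribution property pins down $e(H[L])$ so that $p_2^{-e(H[L])}(1-p_2)^{-\binom{\ell}{2}+e(H[L])}$ can be estimated sharply. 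With those in hand, a direct Chebyshev on $X_H=\sum_S I_S$ succeeds with no modification of the ambient $G_{N,p_2}$. You only invoke $|\Aut(F)|=1$ whp, which is much weaker than $\cA_n$ (asymmetry of all large induced subgraphs) and does not by itself control the dominant variance terms, and you do not use any analogue of $\cE_{n,m}$ at all.

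Your proposed two-round sparsification of $G_{N,p_2}$ (expose $G'\sim G_{N,p'_2}$, filter subsets, then sprinkle) does not obviously eliminate the problematic pairs: if $S$ and $T$ both survive your filter and share $\ell\approx n$ vertices, the correlation between $I_S$ and $I_T$ still comes from the $\binom{\ell}{2}$ shared edge-slots and is not removed by preselection on the sparse round; moreover, conditioning on a subset surviving the filter makes the remaining edge-slots non-product, so the ``expose the rest independently'' step needs justification you don't give. You would also need to show that the filter keeps $\E[\widetilde X\mid F]=\Omega(\E[X\mid F])$ while simultaneously killing overlaps, and you flag this calibration yourself as the unresolved ``combinatorial heart.'' In short: the reduction and the distributional formula are right, but the variance step is a genuine gap, and the missing idea is to condition on $H\in\cA_n\cap\cE_{n,m}$ and exploit those two properties directly in the $w_\ell$-estimates, as in the paper's Lemma~\ref{lem:maincontain}.
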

%
%
In concrete words, Theorem~\ref{thm:contain} shows that, around the threshold~$n \approx 2\log_a N+1$, the induced containment probability~$\Pr(G_{n,p_1}\isub G_{N,p_2})$ drops from~$1-o(1)$ to~$o(1)$  
in a window of size at most two when~${p_2=1/2}$, whereas this window has unbounded size when~${p_2\neq 1/2}$. 
Our proof explains why this new phenomenon happens in the non-uniform case~${p_2 \neq 1/2}$:  
here the asymmetry of edges and non-edges makes~${\Pr(G_{n,p_1} \isub G_{N,p_2} \mid G_{n,p_1})}$ strongly dependent on the number of edges in~$G_{n,p_1}$, 
which does not occur in the uniform case~${p_2=1/2}$ considered in previous work~\cite{chatterjee2021isomorphisms}. 
This edge-deviation effect turns out to be the driving force behind the limiting probability ${f(c) \in (0,1)}$ in~\eqref{eq:contain:main}; 
see Section~\ref{sec:heur:ISIP} for more proof~heuristics.

Theorem~\ref{thm:contain} confirms the simulation based predictions of McCreesh, Prosser, Solnon and~Trimble~\mbox{\cite{mccreesh2016subgraph,mccreesh2018subgraph}}, 
who empirically plotted the induced containment probability~$\Pr(G_{n,x}\isub G_{N,y})$ for~$N=150$ and predicted a phase transition near~${n \approx 2\log_a N}$; see Figure~5 and Section~3.1 in~\cite{mccreesh2018subgraph}.  
Figure~\ref{fig:heatmap150} illustrates that the fuzziness they found near their predicted threshold can be explained by the limiting probability~$f(c)$ in~\eqref{eq:contain:main}, 
whose existence was not predicted in earlier work 
(of course, the `small number of vertices' effect also leads to some discrepancies in the plots, in particular for very small and large edge-probabilities in Figure~\ref{fig:heatmap150}).

The classical problems of determining the size of the largest independent set and clique of~$G_{N,p_2}$ are both related to Theorem~\ref{thm:contain}, 
as they would correspond to the excluded edge-probabilities~${p_1 \in \{0,1\}}$.
These two classical parameters~$\alpha(G_{N,p_2})$ and~$\omega(G_{N,p_2})$ are well-known~\cite{BB,JLR} to typically have size~${2 \log_{a}N - \Theta(\log \log N)}$ for~$a \in \{1/(1-p_2),1/p_2\}$, 
and this additive~$\Theta(\log \log N)$ left-shift compared to the threshold from Theorem~\ref{thm:contain} 
stems from an important conceptual difference: 
$n$-vertex cliques and independent sets have an automorphism group of size~$n!$, 
whereas~$G_{n,p_1}$ typically has a trivial automorphism group. 
This is one reason why our proof needs to take pseudorandom properties of random graphs into account; see also Sections~\ref{sec:heur:ISIP} and~\ref{sec:pseudorandom}. 


\begin{figure}
\centering
\begin{subfigure}{.29\textwidth}
\centering
\includegraphics[scale=0.25]{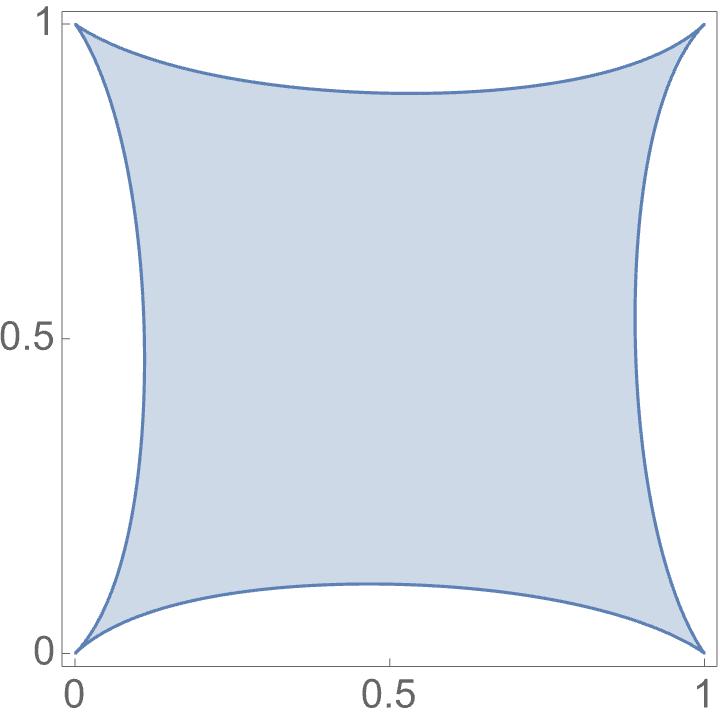}\hspace{.8em}
\caption[]{}
\label{fig:region1}
\end{subfigure}
\begin{subfigure}{.29\textwidth}
\centering
\includegraphics[scale=0.25]{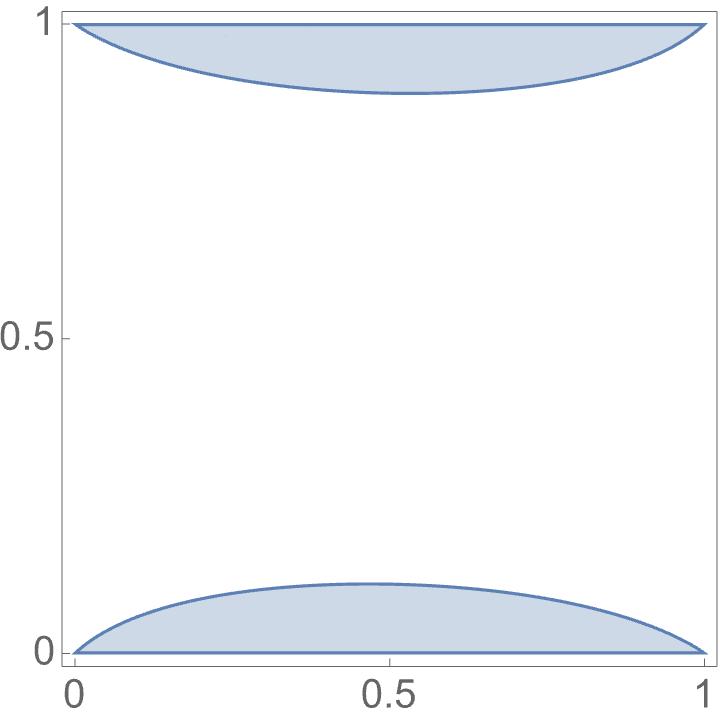}\hspace{.8em}
\caption[]{}
\label{fig:region2}
\end{subfigure}
\begin{subfigure}{.29\textwidth}
\centering
\includegraphics[scale=0.25]{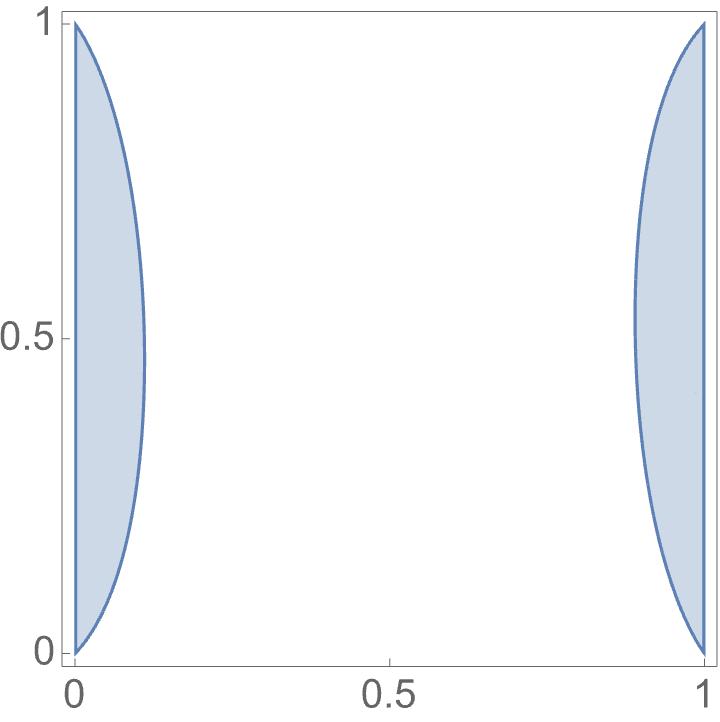}\hspace{.8em}
\caption[]{}
\label{fig:region3}
\end{subfigure}
\captionsetup{singlelinecheck=off}
\vspace{-0.5em}\caption{Theorem~\ref{thm:maincommon} establishes two-point concentration around~$n_N$ 
of the maximum common induced subgraph of two independent binomial random graphs~$G_{N,p_1}$ and~$G_{N,p_2}$. 
The plots illustrate how the form of~$n_N$ subdivides the unit square of edge-probabilities ${(p_1,p_2)\in (0,1)^2}$: 
we have ${\nN= \xot_N(\ps)\sim 4\log_{b_0(\hat{p})} N}$ in~region~(a), 
${\nN\sim \xone_N(p^*_1) \sim 2\log_{b_1(p^*_1)}N}$ in~region~(b), 
and ${\nN\sim \xtwo_N(p^*_2) \sim 2\log_{b_2(p^*_2)}N}$ in~region~(c), 
where~$\ps$ is defined as in~\eqref{def:phat} and~$p^*_i$ is the unique solution of~$\log b_0(p)=2\log b_i(p)$; 
see~Lemma~\ref{lem:figcaption} in~Appendix~\ref{sec:calcstuff}. 
Previous work~\cite{chatterjee2021isomorphisms} applied to the special case~$p_1=p_2=1/2$, i.e., only to the central point in region~(a).%
\label{fig:regions}}
\end{figure}

As a consequence of our proof of Theorem~\ref{thm:contain}, 
we are able to determine the asymptotic distribution of the number of induced copies of $G_{n,p_1}$ in $G_{N,p_2}$, 
resolving another open problem of Chatterjee and Diaconis~\cite{chatterjee2021isomorphisms}. 
In concrete words, Theorem~\ref{thm:contain:dist}~\ref{enum:contain:dist:12} shows that 
the number of induced copies has a Poisson distribution 
for~${p_2=1/2}$ and~$n$ close to the sharp threshold location~${2\log_a N + 1}$. 
Furthermore, Theorem~\ref{thm:contain:dist}~\ref{enum:contain:dist:other} shows that  
the number of induced copies has a `squashed' log-normal distribution
for~${p_2 \neq 1/2}$ and ${n=2\log_a N + \Theta(1)}$, 
which is a rather unusual limiting distribution for random discrete structures 
(that intuitively arises since the number of such induced copies is so strongly dependent on the number of edges in~$G_{n,p_1}$; see Section~\ref{sec:contain:dist}). 
Below~${(N)_n := N(N-1)\dots (N-n+1)}$ denotes the falling factorial. 
\enlargethispage{\baselineskip}
\begin{thm}[Asymptotic distribution]\label{thm:contain:dist}%
Let~$p_1,p_2\in (0,1)$ be constants.  
Define $a := {1/\bigpar{p_2^{p_1}(1-p_2)^{1-p_1}}}$ and ${\varepsilon_N := (\log \log N)^2/\log N}$ as in~Theorem~\ref{thm:contain}. 
For independent binomial random graphs~$G_{n,p_1}$ and~$G_{N,p_2}$, let~$X=X_{n,N}$ denote the number of induced copies of~$G_{n,p_1}$ in~$G_{N,p_2}$. 
Then the following~holds, as~$N \to \infty$:%
\begin{romenumerate}
\item\label{enum:contain:dist:12}
If~$p_2=1/2$ and~$2\log_a N -1 + \eps_N \le n \le N $, then~$X$ has asymptotically Poisson distribution with mean~$\mu=\mu_{n,N}:=(N)_{n}2^{-\binom{n}{2}}  = \bigpar{N \cdot 2^{-(n-1)/2+O(n/N)} }^{n}$, i.e., 
\begin{align}\label{eq:contain:dist:12}
\TV\bigpar{X, \: \Po\xpar{\mu}} \: &\to \: 0.
\intertext{%
\item 
If~$p_2 \neq 1/2$ and~$n-(2\log_a N+1) \to c \in (-\infty,\infty)$ as~$N \to \infty$, then}
\label{eq:contain:dist:other}
\frac{\log (1+X)}{\log N}\: &\dto \: \TNor\Bigpar{-c, \; 2p_1(1-p_1)  {\log^2_a(1/p_2-1)}},
\end{align}
where~$\TNor(\mu,\sigma^2)$ has cumulative distribution function~$F(x) := {\indic{x \ge 0} \Pr(\Nor(\mu,\sigma^2) \le x)}$.
\end{romenumerate}\vspace{-0.125em}%
\end{thm}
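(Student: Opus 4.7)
The plan is to condition on $F := G_{n,p_1}$ and, for each realization of $F$ with $E := e(F)$ edges, to study the conditional count $X_F$ of induced copies of the specific graph $F$ in $G_{N,p_2}$. Standard facts about dense random graphs imply that $F$ has trivial automorphism group and suitable pseudorandom properties whp, so on this event the conditional mean is
\[
\mu_F \;=\; (N)_n \, p_2^{E}\,(1-p_2)^{\binom{n}{2}-E}.
\]
For part~\ref{enum:contain:dist:12} with $p_2=1/2$, this collapses to the deterministic value $\mu_F=\mu$, independent of $E$. For part~\ref{enum:contain:dist:other} with $p_2\neq 1/2$, the quantity $\mu_F$ depends exponentially on the random variable $E$, which is the mechanism behind the squashed log-normal limit.

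For part~\ref{enum:contain:dist:12} I would establish Poisson convergence of $X_F$ conditional on typical $F$ via the method of factorial moments, or equivalently a Stein--Chen coupling. Writing
\[
\E\bigl[X_F^{(k)}\bigr] \;=\; \sum_{(S_1,\ldots,S_k)}\Pr\bigl(G_{N,1/2}[S_i]\cong F \text{ for each }i\bigr),
\]
the contribution from $k$-tuples with pairwise disjoint $S_i$ equals $\mu^k\bigl(1+o(1)\bigr)$, so it suffices to bound the overlapping-tuple contribution by $o(\mu^k)$ uniformly for $2\log_a N - 1 + \eps_N \le n \le N$. This is exactly where the pseudorandom properties of $F$ enter: they rule out the kind of unusually symmetric overlap patterns (local isomorphism density concentrations, near-automorphisms on small subsets) that would otherwise spoil the factorial-moment matching. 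Integrating over $F$, using $\mu_F=\mu$ uniformly on the pseudorandom event, then delivers the claimed TV bound~\eqref{eq:contain:dist:12}.

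For part~\ref{enum:contain:dist:other} I would first analyze the random variable $\log \mu_F$ using the CLT for $E\sim\Bi\bigl(\binom{n}{2},p_1\bigr)$. Writing $E=\binom{n}{2}p_1+Z$ and substituting, together with the identities $\log a = -p_1\log p_2 - (1-p_1)\log(1-p_2)$ and $\log(N)_n = n\log N + O(n^2/N)$, yields
\[
\log \mu_F \;=\; -\tfrac{1}{2}\,n\,c'\,\log a \;+\; Z\,\log\!\bigl(p_2/(1-p_2)\bigr) \;+\; o(\log N),
\]
where $c' := n-(2\log_a N+1)\to c$. Normalizing by $\log N$ and using $n/\log_a N\to 2$ gives $(\log\mu_F)/\log N\dto Y\sim\Nor(-c,\sigma^2)$ with the stated $\sigma^2=2p_1(1-p_1)\log_a^2(1/p_2-1)$. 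The remaining task is a transfer from $\mu_F$ to $\log(1+X_F)$: on the event $Y>0$, $\mu_F$ is polynomial in $N$ and concentration of $X_F$ around $\mu_F$ gives $\log(1+X_F)/\log N\to Y$; on the event $Y<0$, a first moment bound yields $X_F=0$ whp and hence $\log(1+X_F)/\log N\to 0$. Combining these cases produces $\max(0,Y)$, which is precisely the distribution $\TNor(-c,\sigma^2)$ as defined in the theorem.

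The main obstacle is the concentration step in part~\ref{enum:contain:dist:other}: the ordinary second moment method for induced subgraph counts fails here because of large variance contributions from overlapping copies, as emphasized in the introduction. I expect to handle this by first restricting to a pseudorandom event for $F$ and then using a multi-round exposure of $G_{N,p_2}$, exposing part of the graph to identify many almost-disjoint candidate embeddings of $F$ and then completing with fresh independent randomness so a Chernoff-type concentration applies on the conditional count. The boundary region $Y\approx 0$ can be handled separately by truncating to $|Y|\ge\delta$ for arbitrarily small $\delta$ and invoking continuity of the limiting normal distribution.
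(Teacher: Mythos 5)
Your overall strategy matches the paper's for both parts: condition on $F=G_{n,p_1}$, restrict to pseudorandom $F$, note that $\E X_F$ collapses to a deterministic $\mu$ when $p_2=1/2$ (enabling Poisson approximation) and depends exponentially on $e(F)$ when $p_2\neq 1/2$ (producing the squashed log-normal via the CLT for $e(G_{n,p_1})$ and a zero-one transfer from $\mu_F$ to $\log(1+X_F)$). The CLT computation you sketch for $(\log\mu_F)/\log N$ is correct, and part~\ref{enum:contain:dist:12} via Stein--Chen (which you mention as the alternative to factorial moments) with overlap counting controlled by pseudorandomness is exactly Section~\ref{sec:contain:dist:poisson}.

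The one place you diverge from the paper, and where there is a real issue, is the concentration step in part~\ref{enum:contain:dist:other}. You say ``the ordinary second moment method for induced subgraph counts fails here because of large variance contributions from overlapping copies'' and propose a further multi-round exposure \emph{inside} $G_{N,p_2}$ so that ``Chernoff-type concentration applies.'' This conflates two distinct phenomena. The variance blow-up that the paper emphasizes is in the \emph{unconditional} count $X$ and is caused by fluctuations in $e(G_{n,p_1})$; the overlap contributions for a \emph{fixed} pseudorandom $H\in\cT_{n,m}$ are already small. Indeed, Lemma~\ref{lem:maincontain} computes $\Var X_H=o((\E X_H)^2)$ directly via the case analysis in~\eqref{eq:generalineq:case1}--\eqref{eq:generalineq:case2}, using $\cA_n$ to kill the $|\Aut(H[L])|$ factor for large $|L|$ and $\cE_{n,m}$ to pin down $e(H[L])$, and Remark~\ref{rem:maincontain} then gives $X_H=(1\pm\xi)\E X_H$ whp by Chebyshev. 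No further exposure of $G_{N,p_2}$ is needed. Moreover, your proposed alternative is not obviously correct: a two-round exposure identifying ``almost-disjoint candidate embeddings'' and then sprinkling is the standard way to force $X_F\ge 1$, but it does not directly give multiplicative concentration $X_F\approx\E X_F$; the conditional count is a sum over all $\binom{N}{n}$ sets, not just the chosen candidates, so a Chernoff bound does not apply as stated. You should drop this extra layer and simply invoke the second-moment/Chebyshev bound for fixed pseudorandom $H$, which is precisely what the paper's two-round exposure (first $G_{n,p_1}$, then $G_{N,p_2}$) is designed to enable.

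Your handling of the boundary $Y\approx 0$ by truncating to $|Y|\ge\delta$ is fine and matches the role of $\eps_N$ and estimate~\eqref{eq:eGnp:nomiddle} in the paper, using that $Z$ has a nondegenerate normal limit.
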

%

For the uniform case $p_2=1/2$, Svante Janson pointed out that one can sharpen the containment probability estimate from Theorem~\ref{thm:contain}~\ref{enum:contain:12} a little further, 
by simply invoking the limiting Poisson result~\eqref{eq:contain:dist:12} for those~$n$ to which~\eqref{eq:contain:12} does not apply.  
This readily leads to the following refinement of Theorem~\ref{thm:contain}~\ref{enum:contain:12}, 
which (a)~shows that in~\eqref{eq:contain:12} we only need to assume ${\eps_N \cdot \log N \to \infty}$,
and (b)~also determines the limiting probability inside the `transition window' 
that was missing in~\eqref{eq:contain:12} and previous work of Chatterjee and Diaconis~\cite{chatterjee2021isomorphisms}. 
\begin{cor}[Refined sharp threshold in special case~$p_2=1/2$]\label{cor:contain}%
Let~$p_1\in (0,1)$ be a constant. 
Then the following refinement of~\eqref{eq:contain:12} holds, for independent binomial random graphs~$G_{n,p_1}$ and~$G_{N,1/2}$:%
\begin{equation}
\lim_{N \to \infty}\Pr\bigpar{G_{n,p_1}\isub G_{N,1/2}} = \begin{cases} 1 \ \ \ \ & \text{if $\bigpar{n-(2\log_2 N+1)}\log_2 N \to -\infty$,}\\ 
1-e^{-2^{-c}} & \text{if $\bigpar{n-(2\log_2 N+1)}\log_2 N \to c \in (-\infty,\infty)$,}\\
0 \ \ \ \ & \text{if $\bigpar{n-(2\log_2 N+1)}\log_2 N \to \infty$.}\end{cases}%
\end{equation}%
\end{cor}

%

%
\pagebreak[3]
\subsection{Maximum common induced subgraph problem for random graphs}\label{sec:introcommon} 
In the \emph{maximum common induced subgraph problem} the objective is to determine the maximum size of an induced subgraph of~$F$ that is isomorphic to an induced subgraph of~$G$, where the maximum size is with respect to the number of vertices (this generalizes the induced subgraph isomorphism problem, since the maximum common induced subgraph is~$F$ if and only if~$F$ is isomorphic to an induced subgraph of~$G$). 
In this paper we focus on this problem for two independent binomial random graphs~$F=G_{N,p_1}$ and~$G=G_{N,p_2}$, 
with constant edge-probabilities~$p_1,p_2 \in (0,1)$.
One motivation here comes 
from combinatorial probability theory~\cite{chatterjee2021isomorphisms}, where the following paradox was recently pointed out: 
two independent infinite random graphs with edge-probabilities~$p_1,p_2 \in (0,1)$ are isomorphic with probability one, 
but two independent finite random graphs~$G_{N,p_1}$ and~$G_{N,p_2}$ are isomorphic with probability tending to zero as~$N \to \infty$.
This discontinuity of limits raises the question of finding the size of the maximum common induced subgraph of~$G_{N,p_1}$ and~$G_{N,p_2}$, 
which also is a natural random graph problem in its own right. 
Chatterjee and Diaconis~\cite{chatterjee2021isomorphisms} answered this question in the special case~$p_1=p_2=1/2$. 
In this paper we resolve the general case with edge-probabilities~${(p_1,p_2) \in (0,1)^2}$: 
we discover that the general form of the maximum size is significantly more complicated than for the uniform case, 
which in fact is closely linked to large variance~difficulties. 

Our next result establishes two-point concentration of the size~$I_N$ of the maximum common induced subgraph of two independent binomial random graphs~$G_{N,p_1}$ and~$G_{N,p_2}$, 
resolving an open problem of Chatterjee and Diaconis~\cite{chatterjee2021isomorphisms}. 
In concrete words, Theorem~\ref{thm:maincommon} shows that~$I_N$ equals, with probability tending to one as~$N \to \infty$, one of (at most) two consecutive integers;  
see~\eqref{eq:thmmain}--\eqref{def:nN:asymp} below.  
%
%
\begin{thm}[Two-point concentration]\label{thm:maincommon}%
Let~$p_1,p_2\in (0,1)$ be constants.  
For independent binomial random graphs~$G_{N,p_1}$ and~$G_{N,p_2}$, define~$I_{N}$ as the size of the maximum common induced subgraph. 
Then 
\begin{equation}\label{eq:thmmain}
\lim_{N \to \infty}\Pr\bigpar{I_N \in \{\floor{n_N-\eps_N}, \floor{n_N+\eps_N}\}} = 1 ,
\end{equation}
where $\eps_N:=(\log \log N)^2/\log N$
and the parameter~$n_N=n_N(p_1,p_2)$  defined in Remark~\ref{rem:savingold} satisfies
\begin{equation}\label{def:nN:asymp}
n_N 
=
\frac{4\log N+O(\log \log N)}{\min_{p\in[0,1]} \max\bigcpar{\log b_0(p), \: 2\log b_1(p), \: 2\log b_2(p)}} ,
\end{equation}
where, using the convention~$0^0=1$, we have
\begin{equation}
\label{def:b}
{b_0} = {b_0}(p):= \lrpar{\frac{p}{p_1p_2}}^p \lrpar{\frac{1-p}{ (1-p_1)(1-p_2)}}^{1-p}
\quad \text{ and } \quad 
b_i = b_i(p) := \lrpar{\frac{p}{p_i}}^p \lrpar{\frac{1-p}{ 1-p_i}}^{1-p}.
\end{equation}
\end{thm}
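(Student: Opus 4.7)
The plan is to index common induced subgraphs by their edge-density $p\in[0,1]$, handling the upper bound via a density-discretized first-moment union bound and the lower bound via a pseudorandom second-moment existence argument at the optimal density, adapted to the three regions in Figure~\ref{fig:regions}.

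For the upper bound $\Pr\bigpar{I_N \ge \floor{n_N + \eps_N}} \to 0$, I would introduce $X_n^{(p)}$, the number of ordered pairs $(A,B)$ of $n$-subsets with $G_{N,p_1}[A]\cong G_{N,p_2}[B]$ and common edge-count $m=\floor{p\binom{n}{2}}$, so that $\{I_N\ge n\}=\bigcup_p\{X_n^{(p)}\ge 1\}$. Three complementary first-moment bounds are available per~$p$. Using $\sum_{[H]:e(H)=m}(n!/|\Aut(H)|)^2 \le n!\binom{\binom{n}{2}}{m}$ yields $\E[X_n^{(p)}] \lesssim N^{2n}(n!)^{-1}b_0(p)^{-\binom{n}{2}}$, vanishing for $n\gtrsim 4\log N/\log b_0(p)$. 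Letting $Y_i^{(p)}$ count the $n$-subsets of $V(G_{N,p_i})$ inducing exactly $m$ edges, the deterministic domination $X_n^{(p)}\le Y_1^{(p)}\cdot Y_2^{(p)}$ makes $X_n^{(p)}=0$ whenever either $Y_i^{(p)}$ vanishes, and $\E[Y_i^{(p)}] \lesssim N^n(n!)^{-1}b_i(p)^{-\binom{n}{2}}$ vanishes for $n\gtrsim 2\log N/\log b_i(p)$. Keeping the best of the three bounds per~$p$ and union-bounding over the $O(n^2)$ feasible densities $p\in\{k/\binom{n}{2}\}$, with Stirling corrections absorbed into $\eps_N$, yields the upper bound.

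For the lower bound $\Pr\bigpar{I_N\le\floor{n_N-\eps_N}}\to 0$, I would fix the optimizer $p^*$ of $\min_p\max\{\log b_0,\,2\log b_1,\,2\log b_2\}$ and split by region. In region~(a), where $\log b_0(p^*)$ is the binding term (so induced subgraphs of density $p^*$ are abundant in both $G_{N,p_1}$ and $G_{N,p_2}$), I would apply a second moment to a pseudorandom subcounter $\tilde X_n^{(p^*)}$ of $X_n^{(p^*)}$ that restricts to pairs $(A,B)$ whose common graph has trivial automorphism group and satisfies degree, codegree, and small-subgraph-count concentration; these quasirandomness conditions barely affect the first moment but suppress the large-overlap contributions to the covariance. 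In regions~(b) and~(c), where $2\log b_i(p^*)$ is binding and induced subgraphs of density $p^*$ in $G_{N,p_i}$ are scarce, I would use a two-round exposure: first expose $G_{N,p_i}$ and, via a refined second moment, extract with high probability a pseudorandom induced subgraph $H$ of size $n$ and density $p^*$; then use the independence of $G_{N,p_{3-i}}$ and a (pseudorandomness-aided) second moment on induced copies of $H$ to obtain $H\isub G_{N,p_{3-i}}$ with probability $1-o(1)$.

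The main obstacle lies in the second moment step of regions~(b) and~(c): at $n\approx n_N-\eps_N$ the number of candidate ``first-round'' subgraphs $H$ in $G_{N,p_i}$ is only polylogarithmic in $N$, and these candidates are tightly correlated (they typically share many vertices), so the naive variance of the natural counter exceeds the squared mean by enormous factors. Overcoming this will require pseudorandomness conditions that simultaneously hold with high probability for some extractable $H$ \emph{and} guarantee concentration of the induced-copy count of $H$ in $G_{N,p_{3-i}}$; this is the ``large variance'' issue flagged in the introduction and is, I expect, the technical heart of the argument.
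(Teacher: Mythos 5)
Your upper bound is essentially the paper's argument: the paper also union-bounds over edge counts and, for each density $p$, applies Markov's inequality to whichever of the three expectations $\E\sum_H X^{(1)}_H$, $\E\sum_H X^{(2)}_H$, $\E\sum_H X^{(1)}_H X^{(2)}_H$ is smallest, using the identity $\sum_{H\in\cG_{n,m}} n!/|\Aut(H)| = \binom{\binom{n}{2}}{m}$ exactly as you do. So no issue there.

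The gap is in the lower bound, and you have correctly located it but not closed it. You propose to handle regions~(b) and~(c) by a two-round exposure: first extract a pseudorandom density-$p^*$ induced subgraph $H$ from $G_{N,p_i}$, then embed $H$ into $G_{N,p_{3-i}}$. You acknowledge that the first round is the ``technical heart'' you have not carried out, and indeed this step is problematic. At $n\approx n_N-\eps_N$ in region~(b), the count $Y_1^{(p^*)}$ has expectation only $e^{\Theta((\log\log N)^2)}$, and the set $A$ you would extract is necessarily an atypical $n$-subset of $G_{N,p_1}$ (its induced density $p^*$ differs from $p_1$ by a constant); the conditional law of $G_{N,p_1}[A]$ given that $A$ was selected by such a rule is not $G_{n,m}$, so Lemma~\ref{lem:typical} does not directly give pseudorandomness of $H$, and the deviation $\delta_m=\Theta(n)$ places $H$ outside the regime $|\psi\delta_m|=o(n)$ required by Lemma~\ref{lem:maincontain} for the second round. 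Moreover, Lemma~\ref{lem:figcaption}\ref{enum:b} shows that at the optimal density $p_0$ in region~(b) one has $\log b_0(p_0)=2\log b_1(p_0)$, so the ``joint'' and ``single-graph'' bottlenecks are \emph{simultaneously} binding; the split into ``joint containment is the constraint'' versus ``single-graph containment is the constraint'' is not as clean as your region-wise decomposition suggests.

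The paper does not decompose the lower bound by region at all. It runs a \emph{single} unified second moment on $X=\sum_{H\in\T}X^{(1)}_HX^{(2)}_H$ for all $(p_1,p_2)$, and the ingredient that rescues the variance where your two-round route gets stuck is a refined enumeration bound~\eqref{eq:bound:Lambda}: in the large-overlap regime $\zeta n\le\ell\le n$, rather than bounding the number of valid configurations on $R_1\cup S_1$ by $|\T|^2(n!)^2$, one first chooses the $k$ edges inside the intersection $R_1\cap S_1$ (constrained to $k=\binom{\ell}{2}m/\binom{n}{2}\pm n^{2/3}(n-\ell)$ by the pseudorandom property $\cE_{n,m}$), and then the remaining edges, yielding $\Lambda_\ell(R_1,S_1)\le (p^p(1-p)^{1-p})^{-2\binom{n}{2}+\binom{\ell}{2}}e^{O(\log n + n^{3/4}(n-\ell))}$. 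Combined with the factorization $w_{\ell_1,\ell_2}/\mu^2\le u_{\ell_1}v_{\ell_2}$ in~\eqref{eq:VarX:well} (which lets one handle the two random graphs one at a time), this is what tames the variance at the optimal density in every region. If you want to pursue the two-round route, you would still need an analogue of this enumeration estimate to control the correlation between near-threshold witnesses in the first round, plus a version of Lemma~\ref{lem:maincontain} valid for $\delta_m=\Theta(n)$ — so the unified second moment with~\eqref{eq:bound:Lambda} appears to be the shorter path.
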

%
%
Interestingly, Figure~\ref{fig:regions} shows that the form of the two-point concentration location~$n_N=n_N(p_1,p_2)$ changes as we vary the edge-probabilities~$(p_1,p_2) \in (0,1)^2$, 
which is a rather surprising  
phenomenon for random graphs with constant edge-probabilities. 
In Section~\ref{sec:heuristic:MCIS} we discuss how the three different forms of~$n_N$ heuristically arise (due to containment in~$G_{N,p_1}$, containment in~$G_{N,p_2}$, and containment in both).  
The value of~$p$ which attains the minimum in~\eqref{def:nN:asymp} 
is an approximation of the edge-density of a maximum common induced subgraph in~$G_{N,p_1}$ and~$G_{N,p_2}$. 
There is a natural guess for the `correct' edge-density: if we condition on 
$G_{n,p_1}$ and $G_{n,p_2}$ being equal, then using linearity of expectation 
the expected edge-density~equals
\begin{equation}\label{def:phat}
\E\biggpar{\frac{e\bigpar{G_{n,p_1}}}{\binom{n}{2}} \: \bigg| \: G_{n,p_1} =G_{n,p_2}}=\frac{p_1p_2}{p_1p_2+(1-p_1)(1-p_2)} = : \ps(p_1,p_2) = \ps.
\end{equation}
It turns out that $\ps$ is indeed the correct edge-density
for a range of edge-probabilities~${(p_1,p_2) \in (0,1)^2}$; see region~(a) in Figure~\ref{fig:regions}. 
In those cases Corollary~\ref{cor:equalprob} gives the two-point concentration location explicitly (the derivation is deferred to Appendix~\ref{sec:calcstuff}), 
which in the special case~$p_1=p_2=1/2$ 
recovers~\cite[Theorem~1.1]{chatterjee2021isomorphisms}. 
\begin{cor}[Special cases]\label{cor:equalprob}%
Let~$p_1,p_2\in (0,1)$ be constants 
satisfying~$\log b_0(\hat{p}) > \max\bigcpar{2\log b_1(\hat{p}), \: 2\log b_2(\hat{p})}$ for~$b_j(p)$ is as defined in~\eqref{def:b}, which in particular holds when~$p_1=p_2$.  
For independent binomial random graphs~$G_{N,p_1}$ and~$G_{N,p_2}$, 
define~$I_{N}$ as the size of the maximum common induced subgraph. 
Then 
\begin{equation}\label{eq:equalprob}
\lim_{N \to \infty}\Pr\bigpar{I_N \in \bigcpar{\bigfloor{\xot_N(\ps)-\eps_N}, \bigfloor{\xot_N(\ps)+\eps_N}}} = 1 ,
\end{equation}
where~$\eps_N:=(\log \log N)^2/\log N$, $\ps=\ps(p_1,p_2)$ is defined as in~\eqref{def:phat}, $b_0=b_0(p)$ is defined as in~\eqref{def:b}, and 
\begin{align}
\label{def:x_N}
\xot_N(p) &:=  4\log_{b_0} N - 2\log_{b_0}\log_{b_0} N-2\log_{b_0} (4/e) +1.
\end{align}
\end{cor}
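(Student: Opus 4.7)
The plan is to derive Corollary~\ref{cor:equalprob} from Theorem~\ref{thm:maincommon} by pinning down the two-point concentration location $n_N$ precisely under the corollary's hypothesis on~$\ps$. The approach has three steps: (i) identify the minimum in~\eqref{def:nN:asymp} and its minimizer; (ii) extract the lower-order terms of~$n_N$; (iii) verify the hypothesis in the symmetric case $p_1 = p_2$.

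For step~(i), differentiating $\log b_0(p)$ from~\eqref{def:b} gives
$$\frac{d}{dp}\log b_0(p) \; = \; \log\frac{p}{1-p} \, - \, \log\frac{p_1 p_2}{(1-p_1)(1-p_2)},$$
which vanishes precisely at $p = \ps$ (from~\eqref{def:phat}), and strict convexity $\frac{d^2}{dp^2}\log b_0(p) = 1/[p(1-p)] > 0$ then shows that $\ps$ is the unique global minimizer, with minimum value $\log b_0(\ps) = -\log(p_1 p_2 + (1-p_1)(1-p_2))$. Combining the corollary's hypothesis $\log b_0(\ps) > 2\log b_i(\ps)$ for $i\in\{1,2\}$ with continuity, the maximum $\max\{\log b_0, 2\log b_1, 2\log b_2\}$ equals $\log b_0$ on a neighborhood $U$ of $\ps$; off $U$, strict minimality of $\ps$ gives $\log b_0(p) > \log b_0(\ps)$. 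So the overall minimum in~\eqref{def:nN:asymp} equals $\log b_0(\ps)$, attained at $\ps$.

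For step~(ii), the $O(\log\log N)$ error in~\eqref{def:nN:asymp} is too crude, so I would revisit the first-moment calculation underpinning Theorem~\ref{thm:maincommon}. Summing over isomorphism classes of $n$-vertex graphs $F$, and using that typical $F$ have $|\Aut(F)|=1$ together with the fact that the dominant contribution comes from $F$ of edge-density $\ps$, one obtains
$$\E[\text{\# common induced subgraphs of size } n] \; \approx \; \frac{(N)_n^2}{n!} \, \bigpar{p_1 p_2 + (1-p_1)(1-p_2)}^{\binom{n}{2}} \; = \; \frac{(N)_n^2}{n!}\, b_0(\ps)^{-\binom{n}{2}}.$$
Taking logarithms, applying Stirling to $(N)_n$ and $n!$, and expanding $\binom{n}{2} = n^2/2 - n/2$ gives the balance equation $n\log b_0(\ps) \approx 4\log N - 2\log n + 2 + \log b_0(\ps)$. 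Iterating with $n \approx 4\log_{b_0(\ps)}N$ yields
$$n \; = \; 4\log_{b_0} N - 2\log_{b_0}\log_{b_0}N - 2\log_{b_0}(4/e) + 1 + o(1) \; = \; x_N^{(0)}(\ps) + o(\eps_N),$$
so the concentration window in~\eqref{eq:thmmain} coincides with that in~\eqref{eq:equalprob}.

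For step~(iii), a direct computation from~\eqref{def:b} yields the identity $\log b_0(p) = \log b_1(p) + \log b_2(p) + H(p)$, where $H(p) := -p\log p - (1-p)\log(1-p)$ is the binary entropy. Via this identity, the corollary's hypothesis rewrites as $|\log b_1(\ps) - \log b_2(\ps)| < H(\ps)$, which is trivial when $p_1 = p_2$ by symmetry (left-hand side is $0$, right-hand side is positive since $\ps \in (0,1)$). The main obstacle is step~(ii): the refined Stirling bookkeeping must match $n_N$ and $x_N^{(0)}(\ps)$ up to an additive $o(\eps_N)$ discrepancy, which in particular requires tracking the $-n/2$ correction in $\binom{n}{2} = n^2/2 - n/2$ (responsible for the $+1$ constant in~\eqref{def:x_N}) and the $\log_{b_0}4$ versus $\log_{b_0}e$ interplay from Stirling (responsible for the $4/e$).
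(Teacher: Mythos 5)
The proposal has a genuine gap in step~(ii), and also a structural mismatch with how the paper sets things up. The paper does not define the concentration location~$n_N$ heuristically by a first-moment balance; it \emph{defines} it exactly in Remark~\ref{rem:savingold} as the max-min optimization
\[
n_N := \max_{p\in[0,1]}\min\bigl\{\xot_N(p),\xone_N(p),\xtwo_N(p)\bigr\},
\]
and the corollary's job is to show that under the stated hypothesis this \emph{exact} quantity equals $\xot_N(\ps)$ for all large~$N$. Your step~(ii) instead tries to re-derive a threshold from scratch via a Stirling balance equation and only lands on $n_N = \xot_N(\ps) + o(\eps_N)$. That is not enough: Theorem~\ref{thm:maincommon} gives two-point concentration at $\bigl\{\floor{n_N-\eps_N},\floor{n_N+\eps_N}\bigr\}$, and if $n_N$ and $\xot_N(\ps)$ differ by some nonzero $o(\eps_N)$, then $\floor{n_N\pm\eps_N}$ and $\floor{\xot_N(\ps)\pm\eps_N}$ can differ (by~$1$, when $\xot_N(\ps)\pm\eps_N$ is close to an integer), so the corollary's statement would not follow. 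The paper closes this via Lemma~\ref{lem:figcaption}~\ref{enum:a}: first, the hypothesis $\log b_0(\ps)>\max\{2\log b_1(\ps),2\log b_2(\ps)\}$ forces $\xot_N(\ps)<\min\{\xone_N(\ps),\xtwo_N(\ps)\}$ for large~$N$ (via the uniform asymptotics~\eqref{eq:xiot:comparison:firstorder:asymp}); second, strict convexity of $\log b_0$ gives $\xot_N(p)<\xot_N(\ps)$ for $p\ne\ps$ (via the explicit calculation~\eqref{eq:order:xot:diff}); combining, $\xot_N(\ps)\le n_N\le\max_p\xot_N(p)=\xot_N(\ps)$, so equality holds. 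You should replace your heuristic re-derivation with this max-min sandwich argument on the already-defined~$n_N$.

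Your step~(i) is correct and matches Lemma~\ref{lem:strictlyconvex}. Your step~(iii) is correct and is in fact a cleaner route than the paper's: the identity $\log b_0(p)=\log b_1(p)+\log b_2(p)+H(p)$ with $H$ the binary entropy turns the hypothesis into $\lvert\log b_1(\ps)-\log b_2(\ps)\rvert<H(\ps)$, which is immediate for $p_1=p_2$ (left side vanishes by symmetry, right side is positive since $\ps\in(0,1)$). The paper instead verifies the hypothesis for $p_1=p_2$ through a chain of Jensen-type inequalities; both work, but your identity-based argument is shorter and more transparent, and would be a welcome simplification of that part if you can first repair step~(ii).
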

In general, the two-point concentration location~$n_N=n_N(p_1,p_2)$ is defined in~\eqref{def:nN} 
as the solution of an optimization problem over all edge-densities~$p \in [0,1]$ of a potential maximum common subgraph 
(as~indicated above, in~Corollary~\ref{cor:equalprob} the maximum is attained by $\hat{p}$, i.e., we have~$n_N=\xot_N(\ps)$ in~\eqref{def:nN} below; see Lemma~\ref{lem:figcaption} in Appendix~\ref{sec:calcstuff}). 
In Section~\ref{sec:heuristic:MCIS} we discuss how this more complicated form of~$n_N$ stems from 
large variance difficulties that can arise in the second moment method arguments 
(where the typical number of copies can be zero even when the expected number of copies tends to infinity). 
While the definition~\eqref{def:nN} of~$n_N$ involves two implicitly defined~\eqref{def:xi} parameters~$x^{(i)}_N(p)$, 
we remark that when~$n_N=x^{(i)}_N(p)$ holds for $i\in \{1,2\}$, 
then~$x^{(i)}_N(p)$ has the explicit form\footnote{
One might be tempted to think that the estimate~\eqref{def:y_N:asymp} could be used 
to explicitly define~$x^{(i)}_N(p)$ for~$i \in \{1,2\}$, by simply ignoring the additive error term. 
Unfortunately, this does not work for a subtle technical reason: 
for fixed~$N$ this would lead to ${\lim_{p \to p_i}x^{(i)}_{N}(p)=-\infty}$ (since~${\lim_{p \to p_i}b_i(p)=b_i(p_i)=1}$), 
making it inadequate for the definition~\eqref{def:nN} of~$n_N$.} 
given in~\eqref{def:y_N:asymp} below; 
see Lemma~\ref{lem:figcaption} in Appendix~\ref{sec:calcstuff} for further estimates~of~$n_N$.
\begin{remark}[Explicit expression for~$n_N$]\label{rem:savingold}%
The proof of Theorem~\ref{thm:maincommon} uses 
\begin{equation}\label{def:nN}
n_N=n_N(p_1,p_2) :=\max_{p\in [0,1]} \; \min\left\{\xot_N(p), \: \xone_N(p), \: \xtwo_N(p) \right\} ,
\end{equation}
where~$\xot_N(p)$ is as defined in~\eqref{def:x_N}, 
and~$x^{(i)}_N=x^{(i)}_N(p)$ with~$i\in\{1,2\}$ is the unique solution to 
\begin{equation}\label{def:xi}
    e\cdot N \; = \; x^{(i)}_N(p) \cdot b_i^{\bigpar{x^{(i)}_N(p)-1}/2},
\end{equation}
where~$b_i=b_i(p)$ is as defined in~\eqref{def:b}. 
Furthermore, if  $n_N=x^{(i)}_N(p)$ holds in~\eqref{def:nN} for $i\in \{1,2\}$, then 
\begin{align}
\label{def:y_N:asymp}
x^{(i)}_N(p) & = 2\log_{b_i}N-2\log_{b_i}\log_{b_i}N-2\log_{b_i}(2/e) +1 +O\lrpar{ \frac{\log\log N}{\log N}}.
\end{align}
\end{remark}

\subsection{Intuition and proof heuristics}\label{sec:heur} 
The proofs of our main results are based on involved applications of the first and second moment method, 
which each require an extra twist due to large variance (that prevents standard applications of these methods). 
In the following we highlight some of the key intuition and heuristics going into our arguments.

\subsubsection{Induced subgraph isomorphism problem}\label{sec:heur:ISIP}
We now heuristically discuss the sharp threshold for induced containment~${G_{n,p_1} \isub G_{N,p_2}}$. 
Besides outlining the reasoning behind our proof approach for \refT{thm:contain}, we also motivate why the threshold is located around~${n \approx 2\log_a N + 1}$, 
and clarify why the case~${p=1/2}$ behaves so differently than the case~${p \neq 1/2}$. 

The natural proof approach would be to apply the first and second moment method to the random variable~$X$ that counts the number of induced copies of~$G_{n,p_1}$ in~$G_{N,p_2}$. 
While this approach can indeed be used to establish~\eqref{eq:contain:12} when~${p_2=1/2}$ (as done in~\cite{chatterjee2021isomorphisms} for~$p_1=p_2=1/2$), 
it fails when~${p_2 \neq 1/2}$: the reason is~$\Var X \gg (\E X)^2$, i.e., that the variance of~$X$ is too large to apply the second moment method. 

We overcome this second moment challenge for~$p_2 \neq 1/2$ by identifying the key reason for the large variance, 
which turns out to be random fluctuations of the number of edges in~$G_{n,p_1}$.
To work around the effect of these edge-fluctuations 
we use a multi-round exposure approach, where we first reveal~$G_{n,p_1}$ and then~$G_{N,p_2}$ 
(which conveniently allows us to deal with one source of randomness at a time). 
When we reveal~$G_{n,p_1}$, we exploit that~$H:=G_{n,p_1}$ will typically be asymmetric and satisfy other pseudorandom properties. 
Writing~$X_H$ for the number of induced copies of~$H$ in~$G_{N,p_2}$, 
we then focus on the conditional~probability
\begin{equation}\label{eq:heur:ISI:Pr}
\pr(G_{n,p_1} \isub G_{N,p_2} \mid G_{n,p_1}=H) = \Pr(H\isub G_{N,p_2})=\pr(X_H \ge 1) .
\end{equation}
To see how the number of edges in~$H$ affects this containment probability, 
suppose for concreteness that~$H$ has $m=\binom{n}{2}p_1+\delta n/2$ edges. 
Recalling~$a = {1/\bigpar{p_2^{p_1}(1-p_2)^{1-p_1}}}$, 
it turns out (see~\eqref{eq:firstmomentXH}--\eqref{eq:mu} in Section~\ref{sec:proofcontain} for the routine details) that the expected number of induced copies of~$H$ in~$G_{N,p_2}$ satisfies 
\begin{equation}\label{eq:heur:ISI}
\E X_H= 
(N)_n \cdot  p_2^m(1-p_2)^{\binom{n}{2}-m}
= \biggsqpar{\lrpar{\frac{p_2}{1-p_2}}^{\delta/2} N e^{-\log a \cdot (n-1)/2 + O(n/N)} }^n.
\end{equation}
Intuitively, the expected number~${\E X_H}$ depends on~$H$ only through its number of vertices~${n=v(H)}$ and edges~${m=e(H)}$, 
since (a)~for asymmetric graphs~$H$ the number~${(N)_n}$ of potential copies depends only on~$n$, 
and (b)~the containment probability~${p_2^m(1-p_2)^{\binom{n}{2}-m}}$ of any such copy of~$H$ depends only on~$n$ and~$m$. 

For~$n=2\log_a N+1+c$ and $p_2 \neq 1/2$ it follows from~\eqref{eq:heur:ISI} that 
the value of the edge-deviation parameter~$\delta$ determines whether~$\E X_H$ 
goes to~$\infty$ or~$0$, 
i.e., depending on whether~${\delta \log\bigpar{p_2/(1-p_2)}}$ is smaller or larger than~${c \log a}$ 
(here we also see why the case $p_2=1/2$ behaves so differently: in~\eqref{eq:heur:ISI} the term involving~$\delta$ equals one and thus disappears). 
With some technical effort, we can make the first moment 
heuristic that~$\E X_H\to\infty$ implies~$\pr(X_H \ge 1) \to 1$ rigorous, 
i.e., use the first and second moment method to show~that 
\begin{equation}\label{eq:heur:ISI:cases}
\pr(X_H \ge 1) = \begin{cases} o(1) \quad & \text{if~$\delta \log\bigpar{p_2/(1-p_2)}< c \log a$,}\\ 
1-o(1) \quad & \text{if~$\delta \log\bigpar{p_2/(1-p_2)}>c \log a$,}\end{cases} 
\end{equation}
where a good control of the `overlaps' of different~$H$ in the variance calculations requires us to identify and exploit suitable pseudorandom properties of~$H$ 
(which we can `with foresight' insert into our argument when we first reveal~${H=G_{n,p_1}}$). 
The crux is that the containment conditions in~\eqref{eq:heur:ISI:cases} only depend on deviations in the number~$e(G_{n,p_1})={\binom{n}{2}p_1+\delta n/2}$ of edges in~$G_{n,p_1}$, 
which in view of~\eqref{eq:heur:ISI:Pr} intuitively translates~into 
\begin{equation}\label{eq:heur:ISI:Pr2}
\pr(G_{n,p_1} \isub G_{N,p_2}) =
  \Pr\biggpar{\frac{e(G_{n,p_1})-\tbinom{n}{2}p_1}{n/2} \cdot \log\bigpar{p_2/(1-p_2)} > c \log a} + o(1)
\end{equation}
for~$n=2\log_a N+1+c$. 
This in turn makes the threshold result~\eqref{eq:contain:main} plausible 
via the Central Limit Theorem 
(in fact, \eqref{eq:heur:ISI:Pr2} is also consistent with the form of~\eqref{eq:contain:12} for~$p_2=1/2$, since then~${\log\bigpar{p_2/(1-p_2)}=0}$); 
see Section~\ref{sec:proofcontain} for the full technical~details of the proof of \refT{thm:contain}.

With this knowledge about~$X_H$ in hand, the distributional result Theorem~\ref{thm:contain:dist} follows without much extra conceptual work. 
Indeed, for~$p \neq 1/2$ we obtain the unusual limiting distribution~\eqref{eq:contain:dist:other} 
by exploiting that the event~$X_H \ge 1$ in the second moment method based~$1-o(1)$ statement in~\eqref{eq:heur:ISI:cases} can be strengthened to~$X_H \approx \E X_H$; see Section~\ref{sec:contain:dist}. 
Furthermore, for~$p=1/2$ we obtain the Poisson limiting distribution~\eqref{eq:contain:dist:12} by refining our variance estimates in order to apply the Stein-Chen method to~$X_H$; see Section~\ref{sec:contain:dist:poisson}.

\subsubsection{Maximum common induced subgraph problem}\label{sec:heuristic:MCIS}
We now heuristically discuss the two-point concentration of the size~$I_N$ of the maximum common induced subgraph in~$G_{N,p_1}$ and~$G_{N,p_2}$. 
Our main focus is on motivating the `correct' typical value~$I_N \approx n_N$ in Theorem~\ref{thm:maincommon},   
which requires a refinement of the vanilla first and second moment approach. 
For ease of exposition, here we shall restrict our attention to the first-order asymptotics of~$n_N=n_N(p_1,p_2)$ from~\eqref{def:nN:asymp}.

Armed with the first and second moment insights for~$X_H$ from the heuristic discussion in~\refS{sec:heur:ISIP},
the natural approach towards determining the typical value of~$I_N$ would be to focus 
on the random variables~$X_{n,m} := \sum_{H \in \cG_{n,m}}X^{(1)}_HX^{(2)}_H$, 
where~$\cG_{n,m}$ denotes the set of all unlabeled graphs with~$n$ vertices and~$m$ edges, 
and~$X^{(i)}_H$ counts the number of induced copies of~$H$ in~$G_{N,p_i}$.  
Here the crux is that 
\begin{equation}\label{eq:heu:MCIS:IN}
I_N = \max\bigcpar{ n \: : \: \text{$X_{n,m} \ge 1$ for some~$0 \le m \le \tbinom{n}{2}$}} . 
\end{equation}
Using~$|\cG_{n,m}|=\binom{\binom{n}{2}}{m} e^{O(n\log n)}$ and the independence of~$G_{N,p_1}$ and~$G_{N,p_2}$, 
similar to~\eqref{eq:heur:ISI} it turns out (see~\eqref{eq:keyestimate:XH1XH2:1} and~\eqref{eq:mu:EX:asymp} in Section~\ref{sec:commonfirst} for the details) 
that for edge-density~$p=p(m,n):=m/\binom{n}{2}$ we have 
\begin{equation}\label{eq:heu:MCIS:X0nm}
\begin{split}
\E X_{n,m} 
&= 
e^{O(n\log n)} \binom{\binom{n}{2}}{m} \cdot \prod_{i \in \{1,2\}} (N)_n p_i^m(1-p_i)^{\binom{n}{2}-m}
    =\left[N^2e^{-\frac{n}{2} \log {b_0}(p)+O(\log n)}\right]^n.
\end{split}
\end{equation}
Taking into account when these expectations go to~$0$ and~$\infty$, 
in view of~\eqref{eq:heu:MCIS:IN} and standard first moment heuristics it then is natural to guess\footnote{The first moment heuristic for~$n_N$ is as follows.  
For any~$n \ge (1+\xi)n_N$, the definition of~$n_N$ and~\eqref{eq:heu:MCIS:X0nm} imply that~$\sum_m\E X_{n,m} \le n^2 \cdot N^{-\Theta(\xi n)} \to 0$. 
Similarly, for any~$n \le (1-\xi)n_n$ we have~$\E X_{n,m} \ge N^{\Theta(\xi n)} \to \infty$ for~$m=\binom{n}{2}\ps$.} 
that the typical value of~$I_N$ should be approximately
\begin{equation}\label{eq:heu:MCIS:nN:0}
n_N \approx \max_{p \in [0,1]}4\log_{b_0(p)} N = \frac{4 \log N}{\min_{p \in [0,1]} \log b_0(p)} = 4 \log_{b_0(\ps)} N ,
\end{equation}
where~$\ps=\ps(p_1,p_2)$ from~\eqref{def:phat} turns out to be the unique minimizer of~$\log b_0(p)$ from~\eqref{def:b}. 
For this choice of~$n_N$ it turns out that we indeed 
typically have~${I_N \approx n_N}$ for a range of edge-probabilities ${(p_1,p_2) \in (0,1)^2}$, including the special case~$p_1=p_2$; 
see Corollary~\ref{cor:equalprob} and region~(a) in~Figure~\ref{fig:regions}.

For general edge-probabilities~${(p_1,p_2) \in (0,1)^2}$ the `correct' form of~$n_N$ is more complicated than our first guess~\eqref{eq:heu:MCIS:nN:0},
and the key reason turns out to be that due to large variance we can have~${\Pr(X_{n,m} \ge 1) \to 0}$ despite~${\E X_{n,m} \to \infty}$.
We overcome this difficulty by realizing that containment in~$G_{N,p_i}$ can, from a `first moment perspective', sometimes be harder than containment in both~$G_{N,p_1}$ and~$G_{N,p_2}$, 
i.e., that we can have~${\E X^{(i)}_{n,m} \to 0}$ for~$X^{(i)}_{n,m} := \sum_{H \in \cG_{n,m}}X^{(i)}_H$ despite~${\E X_{n,m} \to \infty}$.  
Similarly to~\eqref{eq:heu:MCIS:X0nm} and~\eqref{eq:heur:ISI} it turns out (see~\eqref{eq:keyestimate:XHi:1} in Section~\ref{sec:commonfirst} for the details)
that for edge-density~$p=p(m,n)=m/\binom{n}{2}$ we~have 
\begin{equation}\label{eq:heu:MCIS:Xinm}
\begin{split}
\E X^{(i)}_{n,m} 
&= 
e^{O(n\log n)} \binom{\binom{n}{2}}{m} \cdot (N)_n p_i^m(1-p_i)^{\binom{n}{2}-m}
    =\left[Ne^{-\frac{n}{2} \log {b_i}(p)+O(\log n)}\right]^n.
\end{split}
\end{equation}
Note that~$X_{n,m} \ge 1$ implies both~$X^{(1)}_{n,m} \ge 1$ and $X^{(2)}_{n,m} \ge 1$. 
Taking into account when the two expectations in~\eqref{eq:heu:MCIS:Xinm} go to~$0$, 
we thus refine our first guess~\eqref{eq:heu:MCIS:nN:0} for the typical value of~$I_N$ to approximately 
\begin{equation}\label{eq:heu:MCIS:nN}
\begin{split}
n_N & \approx \max_{p \in [0,1]}\min\bigcpar{4\log_{b_0(p)} N, \: 2\log_{b_1(p)} N, \: 2\log_{b_2(p)} N} \\
& = \frac{4 \log N}{\min_{p \in [0,1]} \max\bigcpar{\log b_0(p), \: 2\log b_1(p), \: 2\log b_2(p)}} .
\end{split}
\end{equation}
This turns out to be the `correct' value of~$n_N$ up to second order correction terms, see~\eqref{def:nN:asymp} and~Figure~\ref{fig:regions}. 
Indeed, with substantial technical effort (more involved than for the induced subgraph isomorphism problem) 
we can use refinements of the first and second moment method to prove two-point concentration of form~${I_N \approx n_N}$ 
for all edge-probabilities~${(p_1,p_2) \in (0,1)^2}$; 
see \refS{sec:common} for the full technical details of the proof of \refT{thm:maincommon}.

\subsubsection{Pseudorandom properties of random graphs}\label{sec:pseudorandom}
%
To enable the technical variance estimates in the proofs of our main results, we need to get good control over the `overlaps' of different induced copies. 
To this end it will be important to take certain pseudorandom properties of random graphs into account, 
by restricting our attention to (labelled) graphs~$H$ from the following three~classes:%
{\begin{itemize}[leftmargin=4em]
\itemsep 0.125em \parskip 0em  \partopsep=0pt \parsep 0em 
\item[$\cA_n$:]
The set of all $n$-vertex graphs~$H$ where, for all vertex-subsets~${L\subseteq [n]}$ of size~${|L| \ge n-n^{2/3}}$, the induced subgraph~$H[L]$ is asymmetric, i.e., 
its automorphism group satisfies~${|\Aut(H[L])|=1}$. 
\item[$\cE_{n,m}$:]
The set of all $n$-vertex graphs~$H$ where, for all non-empty vertex-subsets~${L\subseteq [n]}$,  the induced  subgraph~$H[L]$ contains $e(H[L])=\tbinom{|L|}{2}m/\tbinom{n}{2} \pm n^{2/3}(n-|L|)$ edges.
\item[$\cF_{n,p}$:]
The set of all $n$-vertex graphs~$H$ with~$e(H)=\tbinom{n}{2}p \pm n^{4/3}$ edges. 
\end{itemize}}%
In concrete words, the first property~$\cA_n$ formalizes the folklore fact that all very large subsets of random graphs are asymmetric; see~\cite{BB,ER1963,kim2002asymmetry}.
The second and third properties~$\cE_{n,m}$ and~$\cF_{n,p}$ formalize the well-known fact that the edges of a random graph are well-distributed; see~\cite{balister2019dense,BB,FKM2014}.
The following auxiliary lemma states that these are indeed typical properties of dense random graphs 
(we defer the proof to~\refA{apx:pseudorandom}, since it is rather tangential to our main arguments). 
Below~$G_{n,m}$ denotes the uniform random graph, which is chosen uniformly at random from all~$n$-vertex graphs with exactly~$m$ edges. 
Furthermore, the abbreviation whp (with high probability) means with probability tending to one as the number of vertices goes to~infinity. 
\begin{lemma}[Random graphs are pseudorandom]\label{lem:typical}
Let~$p,\gamma \in (0,1)$ be constants. Then the following~holds:
\begin{romenumerate}
\item For all~$\gamma \tbinom{n}{2} \le m \le (1-\gamma)\tbinom{n}{2}$, the uniform random graph whp satisfies~$G_{n,m} \in \cA_n \cap \cE_{n,m}$.
\item The binomial random graph whp satisfies~$G_{n,p} \in \cA_n \cap \cF_{n,p}$.
\end{romenumerate}\vspace{-0.125em}%
\end{lemma}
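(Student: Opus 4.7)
The plan is to prove each of the three properties by the same recipe: a strong tail bound for a single fixed configuration, combined with a union bound over all configurations. The edge-count property~$\cF_{n,p}$ is immediate, since $e(G_{n,p})\sim\Bi\bigpar{\binom{n}{2},p}$ has mean $\binom{n}{2}p$ and variance $\Theta(n^2)$, so a standard Chernoff bound gives $\Pr(G_{n,p}\notin\cF_{n,p})\le 2\exp(-\Omega(n^{2/3}))$, which is $o(1)$ with ample slack.

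For the asymmetry property~$\cA_n$, the key input is the folklore fact (going back to Erd\H{o}s--R\'enyi and refined by Kim, Sudakov, and Vu) that for constant $p\in(0,1)$ the binomial random graph $G_{k,p}$ satisfies $\Pr(|\Aut(G_{k,p})|>1)\le\exp(-\Omega(k))$. For any fixed $L\subseteq[n]$ with $|L|=k$, the induced subgraph $G_{n,p}[L]$ has the same distribution as $G_{k,p}$, and there are at most $\sum_{s=0}^{\lfloor n^{2/3}\rfloor}\binom{n}{s}\le\exp(O(n^{2/3}\log n))$ subsets $L$ with $|L|\ge n-n^{2/3}$. A union bound therefore gives $\Pr(G_{n,p}\notin\cA_n)\le\exp\bigpar{O(n^{2/3}\log n)-\Omega(n)}=o(1)$.

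For the edge-distribution property~$\cE_{n,m}$, I would work directly in the $G_{n,m}$ model: for fixed $L$ with $|L|=n-s$, the complementary count $e(G_{n,m})-e(G_{n,m}[L])$ of edges incident to $[n]\setminus L$ is hypergeometric with mean $\Theta(sn)$ and variance $\Theta(sn)$, so Bernstein's inequality for the hypergeometric distribution yields $\Pr\bigpar{|e(G_{n,m}[L])-\tbinom{n-s}{2}m/\tbinom{n}{2}|\ge n^{2/3}s}\le\exp(-\Omega(sn^{1/3}))$. Since $\binom{n}{s}\le\exp(O(s\log n))$ and $\log n=o(n^{1/3})$, union-bounding over all subsets of each size $n-s$ and then summing over $1\le s\le n$ yields an overall failure probability of $\exp(-\Omega(n^{1/3}))=o(1)$. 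To finish part~(i), I would combine $\cE_{n,m}$ with the asymmetry bound for $G_{n,m}$, which follows from the $G_{n,p}$ bound via the standard coupling $\Pr(G_{n,m}\in\eve)\le O(n)\cdot\Pr(G_{n,p}\in\eve)$ taken at $p=m/\binom{n}{2}$; the polynomial loss is harmless against our exponentially small error.

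The main obstacle I anticipate is choosing a concentration bound sharp enough for the small-$s$ regime of~$\cE_{n,m}$: a naive Chv\'atal-type hypergeometric bound, which has the draw size $m=\Theta(n^2)$ in the denominator, is too weak for $s=O(1)$ (where the statement essentially reduces to controlling individual vertex degrees), and one must use a Bernstein-type bound that reflects the true variance $\Theta(sn)$. Once that is in place, the rest of the argument is careful bookkeeping of exponents inside union~bounds.
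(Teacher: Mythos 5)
Your proposal is correct and follows essentially the same route as the paper: Chernoff for $\cF_{n,p}$; the Kim--Sudakov--Vu exponential asymmetry bound with a union bound over large $L$ for $\cA_n$; a $G_{n,p}$-to-$G_{n,m}$ transfer (Pittel's inequality, with its polynomially bounded ratio) for asymmetry in the uniform model; and a hypergeometric tail bound plus union bound for $\cE_{n,m}$. The one point you flag as a potential obstacle---avoiding a Chv\'atal-style bound whose exponent degrades for small $s$---is handled in the paper not via Bernstein but simply by parametrizing the hypergeometric so that the ``number of draws'' is $k_\ell=\binom{n}{2}-\binom{\ell}{2}=\Theta(sn)$ rather than $m=\Theta(n^2)$; a plain Chernoff bound with that mean then already gives $e^{-\Theta(n^{1/3}s)}$, which is the same exponent you reach via Bernstein.
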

The reason for why we only consider the edge-property~$\cE_{n,m}$ for the uniform random graph~$G_{n,m}$ is conceptual: the edges of~$G_{n,m}$ are simply more well-distributed than the edges of the binomial random~$G_{n,p}$ with~$p:=m/\binom{n}{2} \in [\gamma,1-\gamma]$, where for large vertex-subsets~$L \subseteq [n]$ of size~$|L|= n - o(n^{1/3})$ we expect that typically $\bigl|e(G_{n,p}[L])-\tbinom{|L}{2}p\bigr| = \Omega\bigpar{|L|\sqrt{p(1-p)}} \gg n^{2/3}(n-|L|)$ holds.


\pagebreak[3]
\section{Induced subgraph isomorphism problem}\label{sec:contain}
This section is devoted to the induced subgraph isomorphism problem for two independent random graphs~$G_{n, p_1}$ and~$G_{N,p_2}$ with constant edge-probabilities~${p_1,p_2 \in (0,1)}$. 
It naturally splits into several parts: in the core \refS{sec:proofcontain} we prove the sharp threshold result \refT{thm:contain}, and in the subsequent Sections~\ref{sec:contain:dist}--\ref{sec:contain:dist:poisson} we then separately prove the two parts of the distributional result \refT{thm:contain:dist}. 
%

\subsection{Sharp Threshold: Proof of \refT{thm:contain}}\label{sec:proofcontain}
In this section we prove \refT{thm:contain}, i.e., establish a sharp threshold for the appearance of an induced copy of~$G_{n,p_1}$ in~$G_{N,p_2}$, where~$p_1,p_2 \in (0,1)$. 
Our proof-strategy is somewhat roundabout,
since we need to deal with the difficulty that for~$p_2 \neq 1/2$ the induced containment event $G_{n,p_1}\isub G_{N,p_2}$ is sensitive towards small variations in the number of edges of~$G_{n,p_1}$ (as discussed in Sections~\ref{sec:intro:contain} and~\ref{sec:heur:ISIP}).
For this reason we first prove a sharp threshold result for the event that~$G_{N,p_2}$ contains an induced copy of a particular $n$-vertex pseudorandom graph~$H$ with~$m$ edges (see \refL{lem:maincontain} below). 
Since~$G_{n,p_1}$ will typically be pseudorandom by \refL{lem:typical}, 
this then effectively reduces the problem of estimating ${\Pr(G_{n,p_1}\isub G_{N,p_2})}$ 
to understanding deviations in the number of edges of~$G_{n,p_1}$, 
which is a well-understood and much simpler~problem. 

Turning to the details, to restrict our attention to graphs with pseudorandom properties 
we~introduce
\begin{equation}\label{def:Tnm}
\cT_{n,m} := \cA_n \cap \cE_{n,m},
\end{equation}
where the asymmetry property~$\cA_n$ and the edge-distribution property~$\cE_{n,m}$ are defined as in \refS{sec:pseudorandom}. 
For edge-counts~$m$ of interest, 
\refL{lem:typical} shows that the uniform random graph whp satisfies~$G_{n,m} \in \cT_{n,m}$, 
so \refL{lem:maincontain}  effectively gives a sharp threshold result for the induced containment~$G_{n,m} \isub G_{N,p_2}$. 
\begin{lemma}[Sharp threshold for pseudorandom graphs]\label{lem:maincontain}
Let $p_1,p_2\in (0,1)$ be constants.
Define $a := {1/(p_2^{p_1}(1-p_2)^{1-p_1})}$, $\eps_N:={(\log \log N)^2/\log N}$, and~$\psi:={\log_a\bigpar{p_2/(1-p_2)}}$.
If~$\delta_m := {\bigsqpar{m-\tbinom{n}{2}p_1}/(n/2)}$ 
and~$c_N:={n-(2\log_a N + 1)}$
satisfy ${\abs{\psi \delta_m} = o(n)}$ and~${\abs{c_N} = o(\log_a N)}$, then the following holds: as~$N \to \infty$, for any graph~$H \in \cT_{n,m}$:
 \begin{equation}\label{eq:p2}
\Pr\bigpar{H\isub G_{N,p_2}} = \begin{cases} o(1) \quad & \text{if~$\psi \delta_m-c_N \le -\eps_N$,}\\ 
1-o(1) \quad & \text{if~$\psi \delta_m-c_N \ge \eps_N$.}\end{cases}
\end{equation}
\end{lemma}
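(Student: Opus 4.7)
The plan is to apply the first and second moment methods to the random variable $X_H$ counting induced copies of $H$ in $G_{N,p_2}$, leveraging both components of $\cT_{n,m}= \cA_n \cap \cE_{n,m}$ to tame the second moment. Since $H \in \cA_n$ forces $|\Aut(H)|=1$, the variable $X_H$ coincides with the number of injective embeddings, so
\[
\E X_H \: = \: (N)_n \cdot p_2^m(1-p_2)^{\binom{n}{2}-m} \: = \: (N)_n \cdot a^{-\binom{n}{2}} \cdot \bigpar{p_2/(1-p_2)}^{\delta_m n/2}
\]
after substituting $m = \binom{n}{2}p_1 + \delta_m n/2$. Using $\log_a(N)_n = n \log_a N + o(1)$ (valid since $n = O(\log N)\ll N$) and $n = 2\log_a N + 1 + c_N$, a short calculation gives
\[
\log_a \E X_H \: = \: (n/2)\bigpar{\psi\delta_m - c_N} + o(1).
\]
Since $n \ge (1+o(1))\log_a N$ and $\eps_N = (\log\log N)^2/\log N$, the hypothesis $\psi\delta_m - c_N \le -\eps_N$ gives $\E X_H \le a^{-\Omega((\log\log N)^2)} \to 0$, and Markov's inequality then proves the first line of~\eqref{eq:p2}.

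For the second line we assume $\psi\delta_m - c_N \ge \eps_N$, so $\E X_H \to \infty$, and apply Paley--Zygmund, reducing our task to showing $\E X_H^2 = (1+o(1))(\E X_H)^2$. Expanding $\E X_H^2$ as a sum over ordered pairs of embeddings $(\sigma_1,\sigma_2)$ and grouping by the overlap $S = \sigma_1(V(H)) \cap \sigma_2(V(H))$, we parametrize each configuration by its size $\ell = |S|$, the preimages $L_i = \sigma_i^{-1}(S) \subseteq V(H)$, and the forced graph isomorphism $\phi : H[L_2] \to H[L_1]$. A direct count gives
\[
\E X_H^2 \: = \: \sum_{\ell=0}^n T_\ell, \quad T_\ell \: := \: (N)_n (N-n)_{n-\ell} \sum_{\substack{L_1,L_2 \subseteq V(H) \\ |L_1|=|L_2|=\ell}} |\mathrm{iso}(H[L_1], H[L_2])| \cdot p_2^{2m-e_\ell}(1-p_2)^{2\binom{n}{2}-2m-\binom{\ell}{2}+e_\ell},
\]
where $e_\ell = e(H[L_1]) = e(H[L_2])$. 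The disjoint term satisfies $T_0 = (1 + O(n^2/N))(\E X_H)^2 = (1+o(1))(\E X_H)^2$, so it remains to show $\sum_{\ell \ge 1} T_\ell = o((\E X_H)^2)$.

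We split the remainder into two regimes. In the large-overlap regime $\ell \ge n - n^{2/3}$ the asymmetry property $\cA_n$ forces $|\mathrm{iso}(H[L_1],H[L_2])| \le 1$, killing a factorial $\ell!$ that would otherwise dominate, while $\cE_{n,m}$ yields $e_\ell = \binom{\ell}{2} m/\binom{n}{2} + O(n^{2/3}(n-\ell))$. Plugging these estimates into $T_\ell/(\E X_H)^2$, a direct computation (analogous to the first moment) shows the contribution is dominated by the $\ell = n$ term, which equals $1/\E X_H = a^{-\Omega((\log\log N)^2)} = o(1)$. In the intermediate regime $1 \le \ell < n - n^{2/3}$ we use the trivial bound $|\mathrm{iso}(H[L_1],H[L_2])| \le \ell!$ combined with the quantitative edge estimate from $\cE_{n,m}$; since $(N-n)_{n-\ell}/(N)_n \le N^{-\ell}$ and $n \approx 2\log_a N$, the combinatorial cost $\binom{n}{\ell}^2 \ell! \cdot a^{\binom{\ell}{2}}$ is beaten geometrically in~$\ell$, so the sum over this regime is also~$o(1)$.

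The main obstacle will be the second-moment bookkeeping near full overlap: without the asymmetry property $\cA_n$, each of the factorially many potential relabelings of $H$ on the overlap would contribute and inflate $\E X_H^2$, causing the classical second moment method to fail (as foreshadowed in \refS{sec:heur:ISIP}). Once $\cA_n$ prunes these down to a unique compatible isomorphism per $(L_1,L_2)$ and $\cE_{n,m}$ furnishes the precise edge-counts on overlaps, Paley--Zygmund yields $\Pr(X_H \ge 1) \to 1$, giving the second line of~\eqref{eq:p2}.
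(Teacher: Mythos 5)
Your overall strategy is the same as the paper's: first and second moment methods applied to $X_H$, with the asymmetry $\cA_n$ suppressing the factorial number of automorphisms near full overlap ($\ell$ close to $n$), and the edge-regularity $\cE_{n,m}$ controlling the edge-exponent on overlaps. The first moment computation, the Markov step for the $o(1)$ half, the $T_\ell$ decomposition, the $T_0 = (1+O(n^2/N))(\E X_H)^2$ observation, and the treatment of the large-overlap regime $\ell \ge n - n^{2/3}$ are all correct and match the paper.

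However, there is a genuine gap in your treatment of the intermediate regime $1 \le \ell < n - n^{2/3}$. You state that the combinatorial cost is $\binom{n}{\ell}^2 \ell! \cdot a^{\binom{\ell}{2}}$ and that $N^{-\ell}$ beats it geometrically in $\ell$. Two things go wrong. First, plugging the $\cE_{n,m}$ estimate $e_\ell = \binom{\ell}{2}m/\binom{n}{2} + O(n^{2/3}(n-\ell))$ into $P_\ell := (p_2/(1-p_2))^{-e_\ell}(1-p_2)^{-\binom{\ell}{2}}$ yields $P_\ell = a^{(1-\psi\delta_m/n)\binom{\ell}{2}}e^{O(\psi n^{2/3}(n-\ell))}$, not $a^{\binom{\ell}{2}}$. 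The $\psi\delta_m/n$ correction cannot be dropped: the exponent of $N^{-\ell}\binom{n}{\ell}^2\ell! P_\ell$ works out to roughly $-\frac{\ell}{2}\bigl[(n-\ell)(1-o(1)) + (\psi\delta_m - c_N)\bigr] + O(\ell\log n + n^{2/3}(n-\ell))$, and it is exactly the $\psi\delta_m$ term that combines with $-c_N$ to produce the hypothesis $\psi\delta_m - c_N \ge \eps_N$. If you drop it, the exponent degrades to $-\frac{\ell}{2}[(n-\ell) - c_N] + O(\cdot)$, which is \emph{positive} whenever $c_N > n - \ell$; since the lemma allows $|c_N| = o(\log_a N)$, taking $c_N \asymp n^{0.9}$ and $\ell = n - n^{2/3}$ gives a contribution of size $a^{+\Theta(n^{1.9})}$, not $o(1)$. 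Second, even with the corrected exponent, the per-$\ell$ bound is not monotone in $\ell$: it decreases for small $\ell$, then increases back up toward $\ell = n$, so ``geometric decay in $\ell$'' is not the right picture. The honest estimate is $T_\ell/(\E X_H)^2 \le N^{-\Omega(\ell)}$ for $\ell$ up to a small constant fraction of $n$ (where $\cE_{n,m}$ is vacuous and the trivial bound $e_\ell \in [0,\binom{\ell}{2}]$ must be used instead — this is the paper's threshold $\zeta n$), and $T_\ell/(\E X_H)^2 \le a^{-\Omega(\ell(n-\ell) + \eps_N\ell)}$ for larger $\ell$ up to $n - n^{2/3}$. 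Once both corrections are put in, the sum is $o(1)$ and the argument closes as you intend.
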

\begin{remark}\label{rem:0-statement}
The~\mbox{$o(1)$ statement} in~\eqref{eq:p2} remains valid for any $n$-vertex graph~$H$ with $e(H)=m$ edges. 
\end{remark}
Before giving the (first and second moment based) proof of \refL{lem:maincontain}, we first show how it implies \refT{thm:contain} using a multi-round exposure argument, where we first reveal the number of edges of~$G_{n,p_1}$, 
then reveal the random graph~$G_{n,p_1}$, and afterwards try to embed~$G_{n,p_1}$ into the random graph~$G_{N,p_2}$, each time discarding atypical outcomes along the way (so that we can focus on pseudorandom~$G_{n,p_1}$). 
\begin{proof}[Proof of~\refT{thm:contain}]
To estimate the probability that~$G_{n,p_1} \isub G_{N,p_2}$ holds, 
by monotonicity\footnote{A standard coupling argument shows that~${\Pr(G_{n,p_1} \isub G_{N,p_2})}$ is monotone decreasing in~$n$. 
The crux is that ${G_{n+1,p_1}[\{1, \ldots, n\}]}$, i.e., the induced subgraph of $G_{n+1,p_1}$ on vertex~set ${\{1,\ldots, n\}}$, has the same distribution as~$G_{n,p_1}$.
It thus follows that $\Pr(G_{n+1,p_1} \isub G_{N,p_2}) \le \Pr({G_{n+1,p_1}[\{1, \ldots, n\}]} \isub G_{N,p_2}) = \Pr(G_{n,p_1} \isub G_{N,p_2})$, as claimed.} 
in~$n$ it suffices to consider the case where~$c_N = o(\log \log N)$ holds 
(in fact~$c_N=O(1)$ suffices, since~$\lim_{c \to \infty}f(c)=0$ and~$\lim_{c \to -\infty}f(c)=1$ when~$p_2 \neq 1/2$). 
Since~$G_{n,p_1}$ conditioned on having~$m$ edges has the same distribution as~$G_{n,m}$, 
by using \refL{lem:typical} to handle outcomes of~$G_{n,p_1}$ with an atypical number of edges~$e(G_{n,p_1})$ it follows~that
\begin{equation}\label{eq:thm:contain:m}
\begin{split}
\Bigabs{\Pr(G_{n,p_1} \isub G_{N,p_2}) - \! \sum_{m:\bigabs{m-\tbinom{n}{2}p_1} \le n^{4/3}} \!\!\!\!\!\!\!\!\!\!\!\! \Pr(G_{n,m} \isub G_{N,p_2}) \Pr(e(G_{n,p_1})=m)} 
 \: \le \: \Pr(G_{n,p_1} \not\in \cF_{n,p_1})=o(1) . 
\end{split}
\end{equation}
If~$\bigabs{m-\tbinom{n}{2}p_1} \le n^{4/3}$, 
then by using \refL{lem:typical} to handle the atypical event~$G_{n,m} \not\in \cT_{n,m}$ it follows that 
\begin{equation}\label{eq:thm:contain:Ha}
\begin{split}
\Bigabs{\Pr(G_{n,m} \isub G_{N,p_2}) -  \sum_{H \in \cT_{n,m}} \Pr(H \isub G_{N,p_2}) \Pr(G_{n,m}=H)} 
 \: \le \: \Pr(G_{n,m} \not\in \cT_{n,m}) = o(1) . 
\end{split}
\end{equation}
In particular, using the sharp threshold result \refL{lem:maincontain} for the event $H \isub G_{N,p_2}$ (which applies since~$|\psi \delta_m| =o(n)$ holds)
and \refL{lem:typical} for the typical event~$G_{n,m} \in \cT_{n,m}$, 
it follows that~\eqref{eq:thm:contain:Ha} implies 
\begin{equation}\label{eq:thm:contain:Hn}
\Pr(G_{n,m} \isub G_{N,p_2}) 
= \begin{cases} o(1) \quad & \text{if~$\psi \delta_m-c_N \le -\eps_N$,}\\ 
1-o(1) \quad & \text{if~$\psi \delta_m-c_N \ge \eps_N$.}\end{cases}
\end{equation}
If~$p_2=1/2$, 
then in the setting of~\eqref{eq:contain:12} we have~$\psi=0$ and~$\abs{c_N} \geq \eps_N$, so the sharp threshold result~\eqref{eq:contain:12}  follows by first inserting estimate~\eqref{eq:thm:contain:Hn} into~\eqref{eq:thm:contain:m} 
and finally using~\refL{lem:typical} to infer~$\Pr(G_{n,p_1} \in \cF_{n,p_1})=1-o(1)$. 

We focus on the case $p_2\neq1/2$ in the remainder of the proof.
Note that  estimate~\eqref{eq:thm:contain:Hn} does not apply when 
$|\psi\delta_{m}-c_N|< \eps_N$, but we shall now argue that we can effectively ignore this small range of edge-counts~$m$ in~\eqref{eq:thm:contain:m} above. 
Recall that~$n \to \infty$ as~$N \to \infty$. 
By the Central Limit Theorem it thus follows that, as~$N \to \infty$,  \begin{equation}\label{eq:eGnp:dto}
Z := \psi\delta_{e(G_{n,p_1})} = \psi\cdot \frac{e(G_{n,p_1})-\tbinom{n}{2}p_1}{n/2}
 \: \dto \: \Nor\bigpar{0, \; 2p_1(1-p_1)\psi^2} .
\end{equation}
Recalling that~$\eps_N = o(1)$ and $0 < \Var Z = 2p_1(1-p_1)\psi^2=\Theta(1)$, it follows~that
\begin{equation}\label{eq:eGnp:nomiddle}
\Pr(| Z-c_N| <  \eps_N) \: = \: o(1) .
\end{equation}
(This alternatively follows from~$\max_k\Pr(e(G_{n,p_1})=k) \le O(1)/\sqrt{n^2p_1(1-p_1)} \le O(1/n)$, say.)
Inserting estimates~\eqref{eq:thm:contain:Hn} and~\eqref{eq:eGnp:nomiddle} into~\eqref{eq:thm:contain:m}, using \refL{lem:typical} to infer~$\Pr(G_{n,p_1} \in \cF_{n,p_1})=1-o(1)$  
it follows~that 
\begin{equation}\label{eq:thm:contain}
\bigabs{\Pr(G_{n,p_1} \isub G_{N,p_2}) - \Pr(Z-c_N \ge 0)} \:  = \: o(1),
\end{equation}
which together with the convergence result~\eqref{eq:eGnp:dto} and~${\eps_N =o(1)}$
establishes the threshold result~\eqref{eq:contain:main}.
\end{proof}

In the following proof of~\refL{lem:maincontain} we shall apply the first and second moment method to the random variable~$X_H$, 
which we define as the number of induced copies of~$H$ in~$G_{N,p_2}$. 
Here the restriction to pseudorandom graphs~$H \in \cT_{n,m}$ will be key for controlling the expectation~$\E X_H$ and variance~$\Var X_H$.  
\begin{proof}[Proof of \refL{lem:maincontain} and Remark~\ref{rem:0-statement}]
Let~$H$ be any~$n$-vertex graph with~$e(H)=m$ edges (to clarify: with an eye on Remark~\ref{rem:0-statement} we initially do not assume~$H \in \cT_{n,m}$). 
Note that ${X_H=\sum_{S\in \binom{[N]}{n}} I_S}$, where~$I_S$ is the indicator random variable for the event that the induced subgraph~$G_{N,p_2}[S]$ is isomorphic to~$H$. 
Note that there are exactly $n!/\abs{\Aut(H)}$ distinct embeddings of~$H$ into~$S$. 
Using linearity of expectation and~$(N)_n=N!/(N-n)!$, 
in view of $m=\tbinom{n}{2}p_1+\delta_m n/2$, $a = {1/(p_2^{p_1}(1-p_2)^{1-p_1})}$, $p_2/(1-p_2) = a^{\psi}$ and~$(n-1)/2 = \log_a N + c_N/2$ it follows that the expected number of induced copies of~$H$ in $G_{N,p_2}$ satisfies
\begin{equation}\label{eq:firstmomentXH}
\begin{split}
\E X_H &= \sum_{S\in \binom{[N]}{n}} \E I_S = 
\binom{N}{n} \cdot \frac{n!}{\abs{\Aut(H)}}  \cdot p_2^{e(H)}(1-p_2)^{\binom{n}{2}-e(H)}\\
&= \frac{1}{\abs{\Aut(H)}} \cdot (N)_n \cdot p_2^m(1-p_2)^{\binom{n}{2}-m} \\
&= \frac{1}{\abs{\Aut(H)}} \cdot \lrsqpar{Ne^{O(n/N)} \cdot \lrpar{\frac{p_2}{1-p_2}}^{\delta_m/2}a^{-(n-1)/2}}^n \\
& =\frac{1}{\abs{\Aut(H)}} \cdot \lrsqpar{a^{\psi\delta_m-c_N+o(\eps_N)}}^{n/2} ,
\end{split}
\end{equation}
where we used~$n/N=\Theta((\log N)/N) = o(\eps_N)$ and~$1 < a = O(1)$ for the last step. 
If ${\psi\delta_m - c_N \leq - \eps_N}$, then using~${|\Aut(H)| \ge 1}$ and Markov's inequality together with~${\eps_N n \to \infty}$ as~${N \to \infty}$, we see that 
\[\Pr(X_H>0)  \le \E X_H \le a^{-\Omega(\eps_N n)} = o(1),
\]
establishing the~\mbox{$o(1)$ statement} in~\eqref{eq:p2} and also Remark~\ref{rem:0-statement} (since we so far did not assume~$H \in \cT_{n,m}$). 

In the remainder we focus on the~\mbox{$1-o(1)$ statement} in~\eqref{eq:p2}, i.e., we henceforth
fix~${H \in \cT_{n,m}}$ and assume that ${\psi\delta_m-c_N\geq\eps_N}$.
Since~${H \in \cT_{n,m} \subseteq \cA_N}$ implies~$\abs{\Aut(H)} = 1$, using estimate~\eqref{eq:firstmomentXH} we infer that 
\begin{equation}\label{eq:mu}
\mu := \E X_H = (N)_n \cdot p_2^m(1-p_2)^{\binom{n}{2}-m} = \lrsqpar{a^{\psi\delta_m-c_N+o(\eps_N)}}^{n/2} \ge a^{\Omega(\eps_N n)} \to\infty .
\end{equation}
To complete the proof of~\eqref{eq:p2}, 
using Chebyshev's inequality it thus suffices to show that ${\var X_H =o\bigpar{(\E X_H)^2}}$. 
Recall that ${X_H=\sum_{S\in \binom{[N]}{n}} I_S}$, where~$I_S$ is the indicator random variable for the event that the induced subgraph~$G_{N,p_2}[S]$ is isomorphic to~$H$. 
Since~$I_R$ and~$I_S$ are independent when~${|R \cap S| \le 1}$, we~have
\begin{equation}\label{eq:varXH}
\var X_H \: \le \: \sum_{2\le \ell \le  n}\underbrace{\sum_{R,S \in \binom{[N]}{n}: \abs{R\cap S}=\ell} \E(I_RI_S)}_{=: w_\ell} \: = \: \sum_{2\le \ell< n} \w_\ell + \underbrace{\E X_H}_{= \mu}.
\end{equation}
To bound the parameter~$\w_\ell$ defined in~\eqref{eq:varXH} for~$2 \le \ell < n$, we first note that there are $\binom{N}{n}\binom{n}{\ell}\binom{N-n}{n-\ell}$ ways of choosing $R,S\in\binom{[N]}{n}$ with $|R\cap S|=\ell$. 
To then get a handle on~$\E(I_RI_S)$, we count the number of ways we can embed two induced copies of~$H$ into~$R \cup S$ so that~$G_{N,p_2}[R]$ and~$G_{N,p_2}[S]$ are both isomorphic to~$H$: there are~$n!$ ways of embedding an induced copy of~$H$ into~$R$, at most~$\binom{n}{n-\ell} \cdot (n-\ell)! = \binom{n}{\ell} \cdot (n-\ell)! = (n)_{n-\ell}$ ways of choosing and embedding~$n-\ell$ vertices of a second induced copy of~$H$ into~$S \setminus R$, and at most $\max_{H\in \cT_{n,m},|L|=\ell} |\Aut(H[L])|$ ways of embedding the remaining~$\ell$ vertices of the second induced copy of~$H$ into~$R\cap S$, 
where the maximum is taken over all vertex-subsets $L \subseteq V(H)$ of size~$|L|=\ell$.
As these embeddings determine all edges and non-edges in~$G_{N,p_2}[R \cup S]$, it follows~that 
\begin{equation}\label{eq:generalineq0}
\begin{split}
	\w_\ell \le \binom{N}{n}\binom{n}{\ell} & \binom{N-n}{n-\ell}  \cdot n! \cdot  (n)_{n-\ell} \cdot \max_{H\in \cT_{n,m}, \: \abs{L}=\ell} |\Aut(H[L])| \\
	& \cdot \max_{H\in\cT_{n,m}, \: \abs{L}=\ell} p_2^{2m-e(H[L])} (1-p_2)^{2\binom{n}{2}-\binom{\ell}{2}-2m+e(H[L])} ,
\end{split}
\end{equation}
where the two maxima in~\eqref{eq:generalineq0} are each taken over all vertex-subsets $L \subseteq V(H)$ of size~$|L|=\ell$, as above. 
Note that~${n = (2+o(1))\log_a N}$, which implies $N-n > n-\ell$ for all sufficiently large~$N$. 
Recalling that $\mu = (N)_n p_2^m(1-p_2)^{\binom{n}{2}-m}$ by~\eqref{eq:mu}, using~$n^2/N = o(1)$ it follows that 
\begin{equation}\label{eq:generalineq1}
\frac{\w_\ell}{\mu^2} \leq
\underbrace{\frac{(N-n)_{n-\ell}}{(N)_n}}_{=(1+o(1))N^{-\ell}} \cdot  \binom{n}{\ell}^2 \cdot\max_{H \in \cT_{n,m}, \: \abs{L} = \ell}\abs{\Aut(H[L])} \cdot 
\underbrace{\max_{H\in \cT_{n,m}, \: \abs{L}=\ell} 
\biggpar{\frac{1-p_2}{p_2}}^{e(H[L])}(1-p_2)^{-\binom{\ell}{2}}}_{=:P_{\ell}}.
\end{equation}
Setting~$\epsp := 1/\max\bigcpar{-2\log_a(1-p_2),-2\log_a(p_2),2} >  0$, 
we now bound~$\w_\ell/\mu^2$ further using a case~distinction. 

\textit{\bf Case~$2\le \ell\le \epsp n$:} 
Here we use~$0 \leq e(H[L]) \leq \binom{\ell}{2}$ to deduce that the parameter~$P_\ell$ from~\eqref{eq:generalineq1} satisfies 
\begin{align}\label{eq:case1contain}
P_{\ell} \le \max_{0 \le k \le \binom{\ell}{2}} p_2^{-k}(1-p_2)^{-\binom{\ell}{2}+k} \le \biggsqpar{\max\biggcpar{\frac{1}{p_2},\frac{1}{1-p_2}}}^{\binom{\ell}{2}} .
\end{align}
Inserting this estimate and the trivial bound $ |\Aut(H[L])|\le |L|!$ into~\eqref{eq:generalineq1}, 
using~$\tbinom{n}{\ell} \le n^\ell/\ell!$ 
together with~$n = (2+o(1))\log_a N$ and $\ell \le \epsp n \le  (1+o(1))\log_{\max\{1/p_2,1/(1-p_2)\}} N$,
it follows for all  large enough~$N$ that 
\begin{equation}\label{eq:generalineq:case1}
\begin{split}
	\frac{\w_\ell}{\mu^2} &\leq 2N^{-\ell} \cdot \binom{n}{\ell}^2 \ell! \cdot P_{\ell} 
	\leq \Biggsqpar{N^{-1}n^{2} \cdot \biggsqpar{\max\biggcpar{\frac{1}{p_2},\frac{1}{1-p_2}}}^{(\ell-1)/2}}^{\ell} 
	\le N^{-\ell/3} \le a^{-\Omega(n)}.
\end{split}
\end{equation}

\textit{\bf Case~$\epsp n \le \ell < n$:} 
Here we improve the estimate~\eqref{eq:case1contain} for~$P_\ell$ using that~$H \in {\cT_{n,m} \subseteq \cE_{n,m}}$: this pseudorandom edge-property implies that, for any vertex-subset~$L\subseteq [n]$ of size  $|L|=\ell$, the induced number of edges satisfies~$e(H[L])=\tbinom{\ell}{2}m/\tbinom{n}{2} +  O(n^{2/3}(n-\ell))$.
Recalling~$a = {1/(p_2^{p_1}(1-p_2)^{1-p_1})}$, using~$m/\tbinom{n}{2}=p_1+\delta_m/n+O(1/n^2)$ and~$(1-p_2)/p_2 = a^{-\psi}$ it follows that the parameter~$P_\ell$ from~\eqref{eq:generalineq1} satisfies 
\begin{equation}\label{eq:case2contain}
	P_{\ell}
	= a^{\binom{\ell}{2}} \cdot \biggpar{\frac{1-p_2}{p_2}}^{\binom{\ell}{2} \delta_m/n + O(n^{2/3}(n-\ell))}
	=a^{(1-\psi \delta_m/n)\binom{\ell}{2} + O(\psi n^{2/3}(n-\ell))} .
\end{equation}
For~$\epsp n \le \ell < n - n^{2/3}$ we then estimate~$|\Aut(H[L])| \leq \ell! \le n^{\ell} \le e^{O(n^{2/3}(n-\ell))}$, and for~$n - n^{2/3} \le \ell < n$ we use~${H \in \cT_{n,m} \subseteq \cA_{n}}$ to obtain $\max_{|L|=\ell} |\Aut(H[L])|=1$.
Note that~$1 < a= O(1)$ and $\psi=O(1)$.
We now insert these estimates and~${\tbinom{n}{\ell}=\tbinom{n}{n-\ell} \le n^{n-\ell}}$ into~\eqref{eq:generalineq1}. 
Then, using $\ell-1 = (n-1) - (n-\ell)$ and  $\abs{\psi\delta_m/n} = o(1)$ as well as $(n-1)/2 = \log_a N + c_N/2$ and~$n-\ell \ge 1$, it follows for all large enough~$N$ that 
\begin{equation}\label{eq:generalineq:case2}
\begin{split}
	\frac{\w_\ell}{\mu^2}&\le 2N^{-\ell}n^{2(n-\ell)} \cdot a^{(1-\psi \delta_m/n)\binom{\ell}{2}} \cdot e^{O( n^{2/3}(n-\ell))} \\
	&\leq\lrsqpar{N^{-1}a^{(1-\psi \delta_m/n)(\ell-1)/2} \cdot e^{O(n^{2/3}(n-\ell)/\ell)}}^\ell\\
	&\leq\lrsqpar{N^{-1}a^{(n-1)/2 - \psi\delta_m/2 +o(1)  - (1-o(1))(n-\ell)/2} \cdot e^{o(n-\ell)}}^\ell\\
	& \le \Bigsqpar{a^{c_N -\psi\delta_m}}^{\ell/2} \le a^{-\Omega(\eps_N n)} \le n^{-\Omega(\log n)},
\end{split}
\end{equation}
where for the second last inequality we used that~$c_N-\psi\delta_m=-(\psi\delta_m-c_N)\leq-\eps_N$. 

To sum up: inserting the estimates~\eqref{eq:generalineq:case1} and~\eqref{eq:generalineq:case2} into the variance bound~\eqref{eq:varXH}, using~\eqref{eq:mu} it follows~that 
 \[   \frac{\var X_H }{(\E X_H)^2} \le  \frac{\sum_{2 \le \ell < n}\w_\ell+\mu}{\mu^2} \le o(1) + \frac{1}{\mu} =o(1),\]
 which completes the proof of \refL{lem:maincontain}, 
 as discussed.
\end{proof}

\begin{remark}\label{rem:maincontain}
If~$H \in \cT_{n,m}$ and~${\psi\delta_m-c_N \ge \eps_N}$, 
then the above proof of \refL{lem:maincontain} shows that 
\begin{equation*}\label{eq:secondmoment:refined}
\Pr\bigpar{|X_H-\E X_H| < \xi \E X_H} = 1-o(1)
\end{equation*}
for any~$\xi>0$, 
where the expected value~$\E X_H$ satisfies the asymptotic estimate~\eqref{eq:mu}. 
\end{remark}

\subsection{Asymptotic Distribution: Proof of \refT{thm:contain:dist}~\ref{enum:contain:dist:other}}\label{sec:contain:dist}
In this section we prove the distributional result \refT{thm:contain:dist}~\ref{enum:contain:dist:other}
as a corollary of the results from \refS{sec:proofcontain}, i.e., establish that for~$p_2 \neq 1/2$ the number of induced copies of~$G_{n,p_1}$ in~$G_{N,p_2}$ has a `squashed' log-normal limiting distribution.
Here the crux is~$X_H$ is strongly dependent on the number of edges: indeed, $X_H$ quickly changes from~$0$ to $(1+o(1)) \E X_H \to \infty$ as the number of edges~$m=e(H)$ passes through the threshold~${\psi\delta_m \sim c_N}$, see~Remarks~\ref{rem:0-statement} and~\ref{rem:maincontain}. 
This makes it plausible that~${\log(1+X_H)/\log N}$ changes abruptly around that threshold, which together with~$c_N \to c$ and the normal convergence result~\eqref{eq:eGnp:dto} for~$\psi\delta_{e(G_{n,p_1})}$
intuitively explains the form of the limiting distribution of~$\log(1+X_{G_{n,p_1}})/\log N$ in the convergence result~\eqref{eq:contain:dist:other}. 
\begin{proof}[Proof of \refT{thm:contain:dist}~\ref{enum:contain:dist:other}]
Note that~$c_N \to c$ and thus~$c_N = O(1)$ by assumption. 
In view of \refR{rem:maincontain}, define~$\cT_n$ as the union of all~$\cT_{n,m}$ from \refS{sec:proofcontain} with~$\bigabs{m-\tbinom{n}{2}p_1} \le n^{4/3}$. 
With analogous reasoning as for~\eqref{eq:thm:contain:m}--\eqref{eq:thm:contain:Ha}, 
using \refL{lem:typical} it follows that 
 \begin{equation}\label{eq:Gnp:Tn}
\Pr(G_{n,p_1} \not\in \cT_n) \: \le \: \Pr(G_{n,p_1} \not\in \cF_{n,p_1}) + \max_{m:\bigabs{m-\tbinom{n}{2}p_1} \le n^{4/3}}\!\!\!\!\!\!\Pr(G_{n,m} \not\in \cT_{n,m}) \: = \: o(1) .
\end{equation}
Furthermore, with an eye on the form~\eqref{eq:mu} of $\E X_H$ in~\refL{lem:maincontain}, 
as in~\eqref{eq:eGnp:dto} we have 
 \begin{equation}\label{eq:Z:dto}
Z := \psi \cdot \delta_{e(G_{n,p_1})} =\psi \cdot  \frac{e(G_{n,p_1})-\binom{n}{2}p_1}{n/2}  \:\: \dto \:\: \Nor\bigpar{0, \: 2p_1(1-p_1)\psi^2} .
\end{equation}

We now condition on~$G_{n,p_1} = H \in \cT_n$, so that~$X|_{G_{n,p_1}=H}=X_H$ and~$|Z| \le 2|\psi| n^{1/3} = o (\sqrt{\log N})$.
In particular, if~${Z-c_N \le -\eps_N}$ holds, then by applying  \refL{lem:maincontain} it follows that~whp
\begin{equation}\label{eq:log:small}
\frac{\log(1+X_H)}{\log N} = \frac{\log 1}{\log N} = 0. 
\end{equation}
Furthermore, if~${Z -c_N\ge \eps_N}$ holds, then by combining \refR{rem:maincontain}, estimate~\eqref{eq:mu} and~$\eps_N = o(1/\sqrt{\log N})$ with~${c_N=O(1)}$ and ${|Z| = o (\sqrt{\log N})}$ as well as~${n = 2 \log_a N + O(1)}$ and~${a=O(1)}$ it  follows that~whp
\begin{equation}\label{eq:log:large}
\frac{\log(1+X_H)}{\log N} = 
\frac{\log\bigpar{\E X_H}}{\log N} + \frac{O(1)}{\log N} =
\frac{n(Z-c_N)}{2\log_a N} + \frac{o(1)}{\sqrt{\log N}}
= Z-c_N + \frac{o(1)}{\sqrt{\log N}}.
\end{equation}
Finally, by combining estimates~\eqref{eq:Gnp:Tn} and~\eqref{eq:eGnp:nomiddle} with the conclusions of~\eqref{eq:log:small}--\eqref{eq:log:large},  
it follows~that 
\begin{equation*}\label{eq:log:approx}
\Pr\biggpar{\biggabs{\frac{\log(1+X)}{\log N}-\bigpar{Z-c_N}\indic{Z -c_N\ge \eps_N} } \ge \frac{1}{\sqrt{\log N}}} = o(1),
\end{equation*}
which together with~\eqref{eq:Z:dto} as well as~$c_N \to c$ and~$\eps_N \to 0$ establishes the convergence result~\eqref{eq:contain:dist:other}. 
\end{proof}

\begin{remark}\label{rem:sec:contain:dist}
Note that the above proof only uses the first and second moment method, 
i.e., does not require the asymptotics of~$\Var X_H$.
Given the somewhat complicated limiting distribution of~$X_H$, we leave it as an interesting open problem to complement~\eqref{eq:contain:dist:other} with near-optimal estimates on the rate of~convergence. 
Furthermore, as suggested out by a referee, it would also be worthwhile to investigate the limiting distribution of~$X_H$ in the case when~$n-(2\log_a N+1) \to \infty$. 
\end{remark}

\subsection{Asymptotic Poisson Distribution: Proof of \refT{thm:contain:dist}~\ref{enum:contain:dist:12}}\label{sec:contain:dist:poisson}
In this section we complete the proof of \refT{thm:contain:dist} by proving \refT{thm:contain:dist}~\ref{enum:contain:dist:12}, i.e., establishing that for~${p_2 =1/2}$ the number of induced copies of~$G_{n,p_1}$ in~$G_{N,p_2}$ has asymptotically Poisson distribution. 
To this end we shall use a version of the Stein-Chen method for Poisson approximation together with a two-round exposure argument 
and a refinement of the variance estimates from~\refS{sec:proofcontain}~for~${p_2=1/2}$.  
\begin{proof}[Proof of \refT{thm:contain:dist}~\ref{enum:contain:dist:12}]
Note that~$X=X_{n,N}$ conditional on~$G_{n,p_1}=H$ has the same distribution as~$X_H$. 
This enables us to again use a two-round exposure argument, where we first reveal~$G_{n,p_1}$ and then afterwards count the number of induced copies of~$G_{n,p_1}$ in~$G_{N,1/2}$.
To this end, let~$\cG_{n}$ be the set of all $n$-vertex graphs. 
Together with~$\cA_n \subseteq \cG_n$ as defined in \refS{sec:pseudorandom}, 
by applying \refL{lem:typical} it follows that 
\begin{align*}
\TV\bigpar{X, \: \Po(\mu)}
&= \sup_{S \subseteq \mathbb{N}}\bigabs{\pr(X\in S)-\pr(\Po(\mu)\in S)}\\ 
&= \sup_{S \subseteq \mathbb{N}}\Bigabs{\sum_{H\in \cG_n}\pr(G_{n,p_1}=H) \cdot \bigsqpar{\Pr(X_H\in S)-\Pr(\Po(\mu)\in S)}}  \\
&\le \sum_{H\in \cA_n}\pr(G_{n,p_1}=H) \cdot \sup_{S \subseteq \mathbb{N}}\bigabs{\Pr(X_H\in S)-\pr(\Po(\mu)\in S)} + \Pr(G_{n,p_1}\not \in \cA_n)\\ 
& \le \max_{H\in \cA_n}\TV\bigpar{X_H, \: \Po(\mu)} + o(1). 
\end{align*}

It thus remains to show that $\TV(X_H,\Po(\mu))=o(1)$ for any~$H \in \cA_n$. Fix a graph~$H \in \cA_n$. As in \refS{sec:proofcontain} we write~$X_H=\sum_{S\in \binom{[N]}{n}} I_S$, where~$I_S$ is the indicator random variable for the event that~$G_{N,p_2}[S]$ is isomorphic to~$H$. 
Since~$H \in \cA_n$ implies~$|\Aut(H)|=1$, using~\eqref{eq:firstmomentXH} with~$p_2=1/2$ it follows~that 
\begin{equation}\label{def:mu}
\E X_H = (N)_n \lrpar{\tfrac{1}{2}}^{\binom{n}{2}} = \mu \qquad \text{ for all~$H \in \cA_n$} .
\end{equation}
Note that $\TV(X_H,\Po(\mu))=o(1)$ immediately follows when~$\mu \to 0$  (since then~$X_H$ and~$\Po(\mu)$ are both whp zero). 
We may thus henceforth assume that~$\mu =\Omega(1)$, which in view of~$\mu =\bigpar{N \cdot 2^{-(n-1)/2+O(n/N)} }^{n}$ and~${n \ge 2\log_2 N-1+\eps_N}$ implies~$n=2\log_2N + O(1)$. 
Since~$I_R$ and~$I_S$ are independent when~${|R \cap S| \le 1}$, 
by applying the well-known version of the Stein-Chen method for Poisson approximation 
(based on so-called dependency graphs) 
stated in~\mbox{\cite[Theorem~6.23]{JLR}}  
it routinely follows~that
\begin{align}\label{def:Lambdai}
    \TV\bigpar{X_H, \: \Po(\mu)} \: \le \: 
   \underbrace{\min\bigcpar{\mu^{-1},1}}_{= O(\mu^{-1})} \cdot \Biggsqpar{ \underbrace{\: \sum_{\substack{R,S \in \binom{[N]}{n}:\\ 2 \le |R \cap S| < n}}\E(I_R I_S)}_{=:\Lambda_1} \ + \  \underbrace{\sum_{\substack{R,S \in \binom{[N]}{n}:\\ 2 \le |R \cap S| \le n}} \E I_R \E I_S}_{=: \Lambda_2} \: } . 
\end{align}
To establish the convergence result~\eqref{eq:contain:dist:12}, 
it thus remains to show that~$\Lambda_1$ and~$\Lambda_2$ are both~$o(\mu)$.

We first estimate~$\Lambda_2$ using basic counting arguments. 
In particular, note~that
\begin{equation*}
\label{eq:RStoS}
\Lambda_2 = \sum_{R \in \binom{[N]}{n}}\E I_R \sum_{\substack{S \in \binom{[N]}{n}:\\ 2 \le |R \cap S| \le n}} \E I_S 
\: \le \: \mu \cdot \sum_{2 \le k \le n} \binom{n}{k}\binom{N-n}{n-k} n!2^{-\binom{n}{2}}.
\end{equation*}
Recalling that~$n=2\log_2 N + O(1)$, 
for all~$2 \le k \leq n-1$ we see (with room to spare) that 
\[
\frac{\binom{n}{k}\binom{N-n}{n-k}}{\binom{n}{k+1}\binom{N-n}{n-k-1}}
= \frac{(k+1)(N-2n+k+1)}{(n-k)^2} > 1
\]
for all sufficiently large~$N$. 
Using~$n = 2\log_2 N + O(1)$ and~$n \geq 2\log_2N -1 +\eps_N$, it follows that
\begin{align*}
\frac{\Lambda_2}{\mu}
& \leq n \cdot \binom{n}{2}\binom{N-n}{n-2} \cdot n!2^{-\binom{n}{2}} \\
& \leq n^5 N^{n-2}  \cdot 2^{-\binom{n}{2}} \\
& \leq N^{-2} \cdot \Bigpar{n^{5/n} N  \cdot 2^{-(n-1)/2}}^n\\
& \leq O(2^{-n}) \cdot \Bigpar{2^{1-\eps_N/2 + o(\eps_N)}}^n\le 2^{-\Omega(\eps_Nn)} \leq n^{-\Omega(\log n)}. 
\end{align*}

Finally, we estimate~$\Lambda_1$ from~\eqref{def:Lambdai} 
by refining the variance estimates from the proof of Lemma~\ref{lem:maincontain}. 
Namely, bounding the following parameter~$\w_\ell$ from~\eqref{eq:varXH} as in~\eqref{eq:generalineq0}, 
by setting~$p_2=1/2$ we infer~that
\[
\w_\ell := \sum_{\substack{R,S \in \binom{[N]}{n}:\\ |R \cap S| =\ell}}\E(I_R I_S)
\leq \binom{N}{n}\binom{n}{\ell}\binom{N-n}{n-\ell} \cdot n! \cdot (n)_{n-\ell} \cdot \max_{H\in \cT_n,  \: \abs{L}=\ell} |\Aut(H[L])| \cdot 2^{\binom{\ell}{2}-2\binom{n}{2}} .\]
Using similar (but simpler) arguments as for inequality~\eqref{eq:generalineq1}, in view of~\eqref{def:mu} it then follows that 
\begin{equation}\label{eq:well:12}
\frac{w_\ell}{\mu} \leq N^{n-\ell} \cdot \binom{n}{\ell}^2 \cdot \max_{H \in\cT_n,  \: \abs{L}=\ell} \abs{\Aut(H[L])} \cdot 2^{\binom{\ell}{2}-\binom{n}{2}} .
\end{equation}
We now bound~$\w_\ell/\mu$ further using a case distinction.

\textit{\bf Case~$2 \leq \ell \le n - n^{2/3}$:} 
Here we insert the trivial bound $|\Aut(H[L])| \le \ell!$ into inequality~\eqref{eq:well:12}.
Writing~${\tbinom{\ell}{2}-\tbinom{n}{2}}= {-(n-\ell)(n+\ell-1)/2}$, 
using~$\tbinom{n}{\ell} \le n^\ell/\ell!$ and~${n \ge 2\log_2 N -1+\eps_N}$ it follows~that  
\begin{equation}\label{eq:ratio:12:small}
\begin{split}
\frac{\w_\ell}{\mu} 
&\leq N^{n-\ell} \cdot \binom{n}{\ell}^2 \cdot \ell! \cdot 2^{\binom{\ell}{2}-\binom{n}{2}} \\
& \leq \lrpar{N \cdot n^{2\ell/(n-\ell)} \cdot  2^{-(n-1+\ell)/2} }^{n-\ell}\\
& \leq \lrpar{2^{2-\eps_N - \ell(1-4(\log n)/(n-\ell))}}^{(n-\ell)/2} \leq 2^{-\Omega(\eps_N n)} \leq n^{-\Omega(\log n)},
\end{split}
\end{equation}
where for the second last inequality we optimized over all~$2 \leq \ell \le n - n^{2/3}$, using~$(\log n)/n = o(\eps_N)$.

\textit{\bf Case~$n - n^{2/3} \leq \ell < n$:} 
Here we exploit~$H \in \cA_n$ 
to insert $|\Aut(H[L])| =1$ into inequality~\eqref{eq:well:12}.
Writing again~${\tbinom{\ell}{2}-\tbinom{n}{2}}= {-(n-\ell)(n+\ell-1)/2}$, 
using~$\tbinom{n}{\ell}=\tbinom{n}{n-\ell} \le n^{n-\ell}$ 
and~$\ell-2+\eps_N=\Omega(n)$ it follows~that
\begin{equation}\label{eq:ratio:12:large}
\begin{split}
\frac{\w_\ell}{\mu} 
&\leq N^{n-\ell} \cdot \binom{n}{\ell}^2 \cdot 2^{\binom{\ell}{2}-\binom{n}{2}}\\
& \leq \lrpar{N \cdot n^2 \cdot 2^{-(n-1+\ell)/2}}^{n-\ell} \\
& \leq 
\lrpar{2^{-(\ell-2+\eps_N)+4 \log n}}^{(n-\ell)/2} \le 2^{-\Omega(n)}.
\end{split}
\end{equation}

To sum up: combining~\eqref{eq:ratio:12:small}--\eqref{eq:ratio:12:large} implies~$\Lambda_1=\sum_{2 \le \ell < n}\w_\ell=o(\mu)$, 
which completes the proof of~\eqref{eq:contain:dist:12}. 
\end{proof}

\begin{remark}\label{rem:sec:dist:poisson}
The above calculations yield~$\Var X_H =(1+o(1)) \E X_H$ by~\eqref{eq:varXH}, and can thus also be used to give an alternative (and compared to Section~\ref{sec:proofcontain} more direct) proof of the sharp threshold result~\refT{thm:contain}~\ref{enum:contain:12} for~$p_2=1/2$. 
We leave the optimal rate of convergence in~\eqref{eq:contain:dist:12} as an intriguing open problem. 
\end{remark}

\section{Maximum common induced subgraph problem}\label{sec:common}
In this section we prove Theorem~\ref{thm:maincommon}, i.e., establish two-point concentration of the size~$I_N$ of the maximum common induced subgraph of two independent random graphs~$G_{N, p_1}$ and~$G_{N,p_2}$ with constant edge-probabilities~${p_1,p_2 \in (0,1)}$. 
It naturally splits into two parts: 
we prove the whp upper bound~${I_N \le \floor{\nN+\eps_N}}$ in Section~\ref{sec:commonfirst}, 
and prove the whp lower bound ${I_N \ge \floor{\nN-\eps_N}}$ in Section~\ref{sec:commonsecond}.

To analyze the parameter~$n_N$ defined in~\eqref{def:nN}, it will be convenient to study the auxiliary~function
\begin{equation}\label{deg:gp}
g(p):=\max\bigcpar{\log b_0(p), \: 2\log b_1(p), \: 2\log b_2(p)} \qquad \text{for~$p \in [0,1]$,}
\end{equation}
where the functions~$b_0(p)$ and~$b_i(p)$ are defined as in~\eqref{def:b}. 
Note that~$g(p)$ 
depends on~$p_1,p_2 \in (0,1)$, but not on~$N$. 
The following key analytic lemma establishes lower bounds on $b_0,b_1$ and $b_2$, as well as properties of $g$ (the proof
is based on standard calculus techniques, and thus deferred to Appendix~\ref{sec:calcstuff}).
\begin{lemma}\label{lem:analytic}%
The functions~$g(p)$ and $b_j(p)$ with~$j \in \{0,1,2\}$ have the following properties:%
\begin{romenumerate}
    \item\label{item:approximate} The function~$g:[0,1] \to (0,\infty)$ has a unique minimizer~$p_0 \in (0,1)$.
    Furthermore, there is~$\xi=\xi(p_1,p_2) \in (0,1/2)$ such that $g(p)\ge  g(p_0) +\xi$ for all~${p\in [0,\xi]\cup [1-\xi,1]}$.
     \item\label{item:bounded} 
     We have $\min_{p\in [0,1]} b_i(p)\ge 1$ for $i\in \{1,2\}$.
     Furthermore, there is~$\lambda=\lambda(p_1,p_2)>1$ such that $\min_{p\in [0,1]} b_0(p) \ge \lambda$, 
		and for sufficiently large~$N \ge N_0(p_1,p_2)$  the following holds for any~${p=p(N) \in [0,1]}$ and $i\in \{1,2\}$: 
		if $x^{(i)}_N(p)\le x^{(0)}_N(p)$,  then~$b_i(p)\ge \lambda$.
     In particular, for any ${p=p(N)\in [0,1]}$ we~have 
		\begin{equation}\label{eq:firstorderofmin}
        \min\left\{\xot_N(p), \: \xone_N(p), \: \xtwo_N(p) \right\}= \frac{4\log N}{g(p)} + O(\log\log N) ,
    \end{equation}
		where the implicit constant in $O(\log\log N)$ depends only on~$p_1,p_2$ (and not on~$p$).
\end{romenumerate}\vspace{-0.125em}%
\end{lemma}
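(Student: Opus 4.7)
My plan is to exploit that $\log b_0$ and $\log b_i$ are Kullback--Leibler divergences of Bernoulli distributions, which makes them strictly convex non-negative functions of~$p \in [0,1]$ with explicit unique minima. For~$i \in \{1,2\}$ we have $\log b_i(p) = D_{KL}\bigpar{\mathrm{Ber}(p) \,\Vert\, \mathrm{Ber}(p_i)} \ge 0$, with equality iff~$p = p_i$, giving $b_i(p) \ge 1$. Direct differentiation shows $\log b_0$ is uniquely minimized at~$p = \hat{p}$ from~\eqref{def:phat}, and a short substitution gives $b_0(\hat{p}) = 1/q$ with $q := p_1 p_2 + (1-p_1)(1-p_2)$; since $p_1, p_2 \in (0,1)$ we have $q < 1$, so $\lambda := 1/q > 1$ is the initial uniform lower bound on $b_0$ in part~(ii).

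For part~(i), $g$ is the maximum of three continuous strictly convex functions, hence continuous and convex. Any interval on which $g$ were constant would force one of these three pieces to agree with $g$ and hence be constant on a sub-interval, which is impossible by strict convexity; so $g$ has a unique minimizer $p_0$. The derivative $\tfrac{d}{dp}\log b_0(p) = \log\bigpar{p(1-p_1)(1-p_2)/((1-p) p_1 p_2)}$ (and its analogues for $\log b_i$) tends to $-\infty$ at $p = 0^+$ and to $+\infty$ at $p = 1^-$, so $g$ is strictly decreasing on some $(0, \delta)$ and strictly increasing on some $(1-\delta, 1)$; this places $p_0 \in (0,1)$ with $g(p_0) < \min\{g(0), g(1)\}$. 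Continuity and compactness then supply a small $\xi = \xi(p_1, p_2) \in (0, 1/2)$ that simultaneously serves as the set-size and the gap $g(p) - g(p_0) \ge \xi$ on $[0,\xi] \cup [1-\xi, 1]$.

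The conditional statement in part~(ii) follows from a continuity plus compactness argument. At $p = p_i$ we have $b_i(p_i)^2 = 1 < a = b_0(p_i)$, so by continuity there is an open neighborhood $U_i$ of $p_i$ with $b_0(p) > b_i(p)^2$ throughout. Taking logarithms of the defining equation $eN = x^{(i)}_N(p) \cdot b_i(p)^{(x^{(i)}_N(p)-1)/2}$ yields the asymptotic $x^{(i)}_N(p) = 2\log N/\log b_i(p) + O(\log\log N)$ whenever $b_i(p)$ is bounded away from $1$ (the constants depending only on that lower bound), and forces $x^{(i)}_N(p)$ to grow much faster than $\log N$ when $b_i(p)$ is close to $1$. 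Combined with $x^{(0)}_N(p) = 4\log N/\log b_0(p) + O(\log\log N)$ and $b_0 > b_i^2$ on $U_i$, this shows that for all large enough $N$ and all $p \in U_i$ we have $x^{(0)}_N(p) < x^{(i)}_N(p)$. Contrapositively, $x^{(i)}_N(p) \le x^{(0)}_N(p)$ forces $p \in [0,1] \setminus U_i$, on which $\log b_i$ is continuous and strictly positive, yielding a uniform lower bound $b_i(p) \ge \lambda$ (possibly after shrinking $\lambda$, still $> 1$).

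Finally, the asymptotic~\eqref{eq:firstorderofmin} follows by case analysis on which term attains $g(p)$. If $g(p) = \log b_0(p)$, then $x^{(0)}_N(p) = 4\log N/g(p) + O(\log\log N)$ with constants uniform in $p$ (using $b_0 \ge \lambda$). If instead $g(p) = 2\log b_i(p)$, then $\log b_i(p) \ge \log\lambda/2 > 0$, so $b_i$ is uniformly bounded away from $1$, and the asymptotic above gives $x^{(i)}_N(p) = 4\log N/g(p) + O(\log\log N)$. The matching lower bound on each of the other two $x^{(j)}_N(p)$ uses $\log b_0 \le g$ and, for $j \in \{1,2\}$, either the bounded-away-from-$1$ case ($b_j \ge \lambda$ yielding $2\log b_j \le g$) or the conditional statement just proved to conclude $x^{(j)}_N > x^{(0)}_N \ge 4\log N/g + O(\log\log N)$ when $b_j$ is close to $1$. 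The main obstacle will be making all $O(\log\log N)$ error constants uniform in $p \in [0,1]$, which is precisely what the uniform bounds from part~(ii) are designed to provide.
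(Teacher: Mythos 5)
Your overall plan is the same as the paper's: recognize $\log b_i$ as KL divergences (giving $b_i\ge 1$ with equality only at $p=p_i$), compute $b_0(\ps)=1/q>1$ explicitly, establish strict convexity of $g$ and hence uniqueness of $p_0\in(0,1)$, and then reduce~\eqref{eq:firstorderofmin} to a case split on which term attains $g(p)$, using the bootstrapped asymptotic $x^{(i)}_N(p)=2\log_{b_i}N+O(\log\log N)$ when $b_i(p)$ is bounded away from~$1$. Part~(i) and the final case analysis are sound.

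The gap is in your proof of the conditional statement in part~(ii) (that $x^{(i)}_N(p)\le x^{(0)}_N(p)$ forces $b_i(p)\ge\lambda$, uniformly in $p$ for all large~$N$). You define $U_i$ as an open neighborhood of $p_i$ where $b_0>b_i^2$ and claim that ``for all large enough $N$ and all $p\in U_i$ we have $x^{(0)}_N(p)<x^{(i)}_N(p)$.'' But the argument you give does not deliver this uniformly: near the boundary of $U_i$ the gap $\log b_0(p)-2\log b_i(p)$ can be arbitrarily small, so the comparison via $4\log_{b_0}N$ vs.~$2\log_{b_i}N$ plus $O(\log\log N)$ errors has an $N$-threshold that blows up; and for $p$ near $p_i$ itself, ``$x^{(i)}_N(p)$ grows much faster than $\log N$'' is not quantified (for each fixed $p\ne p_i$, $x^{(i)}_N(p)$ is still $\Theta(\log N)$, just with a huge constant). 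What is actually needed is a single \emph{explicit} $\lambda>1$ and a bound that is uniform in $p$ with $b_i(p)<\lambda$. The paper achieves this by choosing $\lambda=1+\eta$ via the two numerical conditions in~\eqref{def:lambda}; then $x^{(0)}_N(p)\le 4\log_{b_0(\ps)}N+1$ always, and when $1\le b_i(p)<\lambda$ one inserts the defining identity $eN=x^{(i)}_N b_i^{(x^{(i)}_N-1)/2}$ and uses the monotonicity of $x\mapsto x\lambda^{(x-1)/2}$ (not the asymptotics) to force $x^{(i)}_N(p)>4\log_{b_0(\ps)}N+1$, see~\eqref{eq:xiot:comparison}. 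Your neighborhood route can be repaired by (a) shrinking $U_i$ so that $b_0\ge b_i^2+\delta$ with a fixed margin, and (b) separately handling $b_i(p)<\lambda'$ via the explicit inequality $(\lambda')^{(x^{(i)}_N-1)/2}\ge eN/x^{(i)}_N$ with $\lambda'$ chosen so small that $\log\lambda'/\log b_0(\ps)<1/2$; but as written, the ``continuity plus compactness'' phrasing glosses over exactly the uniformity you acknowledge in your final sentence, so the argument is circular: you use part~(ii) to provide uniformity for~\eqref{eq:firstorderofmin}, but the proof of part~(ii) itself relies on an unjustified uniform claim.
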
%
\noindent
Note that, using the uniformity of estimate~\eqref{eq:firstorderofmin}, now~\eqref{def:nN:asymp} and~$n_N = \Theta(\log N)$ follow readily~from 
\begin{equation}\label{eq:firstorderonN}
    n_N =\max_{p\in [0,1]} \; \min\left\{\xot_N(p), \: \xone_N(p), \: \xtwo_N(p) \right\} \
    = \frac{ 4\log N}{g(p_0)} + O(\log\log N) . 
\end{equation}

\subsection{Upper bound: No common induced subgraph of size $\ceil{\nN+\eps_N}$} \label{sec:commonfirst}
In this section we prove the upper bound in~\eqref{eq:thmmain} of Theorem~\ref{thm:maincommon}.
More precisely, we show that whp there is no common induced subgraph of size $\ceil{\nN+\eps_N}$ in~$G_{N,p_1}$ and~$G_{N,p_2}$, 
which implies the desired whp upper bound~$I_N \le \ceil{\nN+\eps_N}-1 \le  \floor{\nN+\eps_N}$. 
Our proof-strategy employs a refinement of the standard first moment method: the idea is to apply different first moment bounds
for different densities of the potential common induced subgraphs, 
which in turn deal with the three containment bottlenecks discussed in Section~\ref{sec:heuristic:MCIS} (i.e., containment in~$G_{N,p_1}$, containment in~$G_{N,p_2}$, and containment in~both). 
As we shall see, these estimates are enabled by the corresponding three terms appearing in the definition~\eqref{def:nN}~of~$n_N$.

Turning to the details, to avoid clutter we define
\begin{equation}\label{eq:n:UB}
n:=\ceil{\nN+\eps_N}.
\end{equation}
Writing $\eve_m$ for the `bad' event that $G_{N,p_1}$ and $G_{N,p_2}$ have a common induced subgraph with $n$~vertices and $m$~edges,  
using a standard union bound argument it follows that
\begin{equation}\label{eq:In:UB}
\Pr(I_N \ge \ceil{\nN+\eps_N}) 
 \le \sum_{0\le m\le \binom{n}{2}} \hspace{-0.25em}\Pr(\eve_m).
\end{equation}
Since we are only interested in equality of common subgraphs up to isomorphisms, we define $\mathcal{G}_{n,m}$ as the set of all unlabeled graphs with $n$~vertices and $m$~edges. 
Let $\Xone_H$ and $\Xtwo_H$ denote the number of induced copies of $H$ in $G_{N,p_1}$ and $G_{N,p_2}$, respectively. 
The crux is that if $\eve_m$ holds, then the three inequalities~$\sum_{H\in \mathcal{G}_{n,m}} \Xone_H \ge 1$ and~$\sum_{H\in \mathcal{G}_{n,m}} \Xtwo_H \ge 1$ as well as~$\sum_{H\in \mathcal{G}_{n,m}} \Xone_H\Xtwo_H \ge 1$ all hold. 
Invoking Markov's inequality three times, it thus follows that the bad event~$\cB_m$ holds with probability at most
\begin{equation}\label{eq:fmmcommon}
\begin{split}
	\pr(\eve_m)& \le \min\Biggcpar{
\E\hspace{-0.25em}\sum_{H\in \mathcal{G}_{n,m}} \Xone_H, \ \
\E\hspace{-0.25em}\sum_{H\in \mathcal{G}_{n,m}} \Xtwo_H, \ \ 
\E\hspace{-0.25em}\sum_{H\in \mathcal{G}_{n,m}} \Xone_H\Xtwo_H} ,
\end{split}
\end{equation}
where the three expectations correspond to the three containment bottlenecks discussed in Section~\ref{sec:heuristic:MCIS}. 
%
Let 
\begin{equation}\label{def:density}
p=p(m,n):=\frac{m}{\binom{n}{2}}.
\end{equation}
The form of~\eqref{eq:fmmcommon} suggests that we might need a good estimate of~$|\mathcal{G}_{n,m}|$, but it turns out that we can avoid~this: 
by double-counting labeled graphs on $n$ vertices with $m$ edges, the crux is that we obtain the~identity
\begin{equation}
\label{eq:cgnmcardinality}
\sum_{H \in \cG_{n,m}} \frac{n!}{\abs{\Aut(H)}} = \binom{\binom{n}{2}}{m},
\end{equation}
which in view of~\eqref{eq:firstmomentXH} for~$G_{N,p_2}$ interacts favorably with the form of the expectations~$\E \Xone_H$ and~$\E\Xtwo_H$. 
We shall further approximate~\eqref{eq:cgnmcardinality} using the following consequence of Stirling's approximation~formula:
\begin{equation}\label{eq:entropyestimate}
\binom{\binom{n}{2}}{m} 
= e^{O(\log n)} \lrpar{\frac{1}{p^p(1-p)^{1-p}}}^{\binom{n}{2}} 
\qquad \text{when~$m=p\binom{n}{2}$.}
\end{equation}
The heuristic idea for bounding~$\pr(\eve_m)$ is to focus on the smallest expectation in~\eqref{eq:fmmcommon}  for~$m=p\binom{n}{2}$, 
but due to the definition~\eqref{def:nN} of the parameter~$n_N$ in~${n=\ceil{\nN+\eps_N}}$ it will be easier to use a case distinction depending on which term attains the minimum among~$\xot_N(p)$, $\xone_N(p)$ and~$\xtwo_N(p)$. 

\textbf{Case $x^{(i)}_N(p)=\min\bigcpar{\xot_N(p),\xone_N(p),\xtwo_N(p)}$ for $i\in\{1,2\}$:} 
Here we focus on 
on~$\E\sum_{H\in \mathcal{G}_{n,m}} X^{(i)}_H$ in~\eqref{eq:fmmcommon}.
Invoking the estimates~\eqref{eq:cgnmcardinality}--\eqref{eq:entropyestimate} to bound~$\sum_{H \in \cG_{n,m}}1/\abs{\Aut(H)}$, 
by using the definition~\eqref{def:b} of~$b_i=b_i(p) = (p/p_i)^p[(1-p)/(1-p_i)]^{1-p}$ and~$n^2/N=O((\log N)^2/N) = o(1)$ it follows similarly to~\eqref{eq:firstmomentXH} that 
\begin{equation}\label{eq:keyestimate:XHi:1}
 \begin{split}
     \pr(\eve_m)  & \le \E\sum_{H\in \mathcal{G}_{n,m}} X^{(i)}_H = \sum_{H\in \mathcal{G}_{n,m}} \E X^{(i)}_H\\
     & = \sum_{H \in \cG_{n,m}} \frac{1}{\abs{\Aut(H)}}
     \cdot (N)_n \cdot p_i^{p\binom{n}{2}}(1-p_i)^{(1-p)\binom{n}{2}} \\
     & = e^{O(\log n)} \frac{N^n}{n!} 
  b_i^{-\binom{n}{2}} .
 \end{split}
 \end{equation}
Since $x^{(i)}_N=x^{(i)}_N(p)\le \xot_N(p)$, by Lemma~\ref{lem:analytic}~\ref{item:bounded} we have~$b_i=b_i(p)\ge \lambda$ for some constant~${\lambda=\lambda(p_1,p_2)>1}$. 
Inserting Stirling's approximation formula~$n! = e^{O(\log n)}(n/e)^{n}$ and the identity~$eN = x^{(i)}_Nb_i^{(x^{(i)}_N-1)/2}$ from~\eqref{def:xi} into estimate~\eqref{eq:keyestimate:XHi:1}, 
using~$n \ge \nN+\eps_N =  x_N^{(i)} + \eps_N$ (which implies~$x_N^{(i)} \le n$ and $n-x_N^{(i)} \geq \eps_N$) it follows~that
\begin{equation}\label{eq:keyestimatebi1}
 \begin{split}
     \pr(\eve_m) & \le \lrsqpar{e^{O(\log n/n)}\tfrac{eN}{n}b_i^{-(n-1)/2}}^n
     = \lrsqpar{e^{O(\log n/n)}\tfrac{x^{(i)}_N}{n} b_i^{-(n-x^{(i)}_N)/2}}^n \le \lambda^{-\Omega(\eps_Nn)} \le n^{-\Omega(\log n)}.
 \end{split}
 \end{equation}
 
\textbf{Case~$x^{(0)}_N(p)=\min\bigcpar{\xot_N(p),\xone_N(p),\xtwo_N(p)}$:}
Here we focus 
on~$\E\sum_{H\in \mathcal{G}_{n,m}} \Xone_H\Xtwo_H$ in~\eqref{eq:fmmcommon}.
Exploiting independence of the two random graphs~$G_{N,p_1}$ and~$G_{N,p_2}$, using~$\abs{\Aut(H)}^2 \ge \abs{\Aut(H)}$, 
and applying the definition~\eqref{def:b} of~$b_0=b_0(p)$ it follows similarly to~\eqref{eq:keyestimate:XHi:1} that 
\begin{equation}\label{eq:keyestimate:XH1XH2:1}
 \begin{split}
    \pr(\eve_m) 
    & \le \E\sum_{H\in \mathcal{G}_{n,m}} \Xone_H\Xtwo_H = \sum_{H\in \mathcal{G}_{n,m}} \E\Xone_H \E\Xtwo_H \\
    & = 
    \sum_{H \in \cG_{n,m}} \frac{1}{\abs{\Aut(H)}^2}
    \cdot (N)_n^2 \cdot  \bigpar{p_1p_2}^{p\binom{n}{2}} \bigpar{(1-p_1)(1-p_2)}^{(1-p)\binom{n}{2}}\\
    & \leq e^{O(\log n)} \frac{N^{2n}}{n!} {b_0}^{-\binom{n}{2}} .
 \end{split}
 \end{equation}
By Lemma~\ref{lem:analytic}~\ref{item:bounded} we have~$b_0=b_0(p)\ge \lambda$ for some constant~${\lambda=\lambda(p_1,p_2)>1}$.
Recalling that~$\log N = \Theta(n)$ by~\eqref{eq:firstorderonN}, observe that the definition~\eqref{def:x_N} of~$x^{(0)}_N = x^{(0)}_N(p)$ ensures that
\begin{equation}\label{def:x0}
x^{(0)}_N b_0^{(x^{(0)}_N-1)/2} = \tfrac{x^{(0)}_N}{4 \log_{b_0}(N)} 
 \cdot e N^2 
 = \Bigpar{1+O\bigpar{\tfrac{\log \log N}{\log N}}} \cdot  eN^2 
= e^{O( (\log n)/n)} e N^2.
\end{equation}
Inserting Stirling's approximation formula~$n! = e^{O(\log n)}(n/e)^{n}$ and~\eqref{def:x0} into~\eqref{eq:keyestimate:XH1XH2:1}, 
using~$n \ge \nN+\eps_N = x_N^{(0)} + \eps_N$ (which implies~$x_N^{(0)} \le n$ and $n-x_N^{(0)} \geq \eps_N$) it follows~that
\begin{equation}\label{eq:mu0est}
\begin{split}
    \pr(\eve_m)& \le \lrsqpar{e^{O(\log n/n)}  \tfrac{e N^2}{n} {b_0}^{-(n-1)/{2}}  }^n 
=  \lrsqpar{e^{O(\log n/n)}\tfrac{x^{(0)}_N}{n} b_0^{-(n-x^{(0)}_N)/2}}^n \le \lambda^{-\Omega(\eps_Nn)} \le n^{-\Omega(\log n)} .
\end{split}
\end{equation}

To sum up: inserting~\eqref{eq:keyestimatebi1} and~\eqref{eq:mu0est} into~\eqref{eq:In:UB} readily gives~$\Pr(I_N \ge \ceil{\nN+\eps_N})=o(1)$, 
which as discussed completes the proof of the upper bound in~\eqref{eq:thmmain}, i.e., that whp~$I_N \le  \floor{\nN+\eps_N}$.

\subsection{Lower bound: Common induced subgraph of size $\floor{\nN-\eps_N}$} \label{sec:commonsecond}
In this section we prove the lower bound in~\eqref{eq:thmmain} of Theorem~\ref{thm:maincommon}. 
More precisely, we establish the desired whp lower bound~$I_N \ge \floor{\nN-\eps_N}$ by showing that whp there is a common induced subgraph of size $\floor{\nN-\eps_N}$ in~$G_{N,p_1}$ and~$G_{N,p_2}$.
Our proof-strategy is inspired by that of Lemma~\ref{lem:maincontain} from Section~\ref{sec:proofcontain}, though the technical details are significantly more involved: the idea is to pick an `optimal' edge-density~$p$, and then apply the second moment method to the total number of pairs of induced copies of~$H$ in $G_{N,p_1}$ and $G_{N,p_2}$, where we consider only  pseudorandom graphs~$H$ with $\floor{\nN-\eps_N}$ vertices and $\floor{p\binom{n}{2}}$ edges. 
Here the restriction to pseudorandom~$H$ will again be key for controlling the expectation and variance, 
with the extra wrinkle that the resulting involved variance calculations share some similarities with fourth moment~arguments 
(that require some new ideas to control the `overlaps' of different~$H$, including more careful enumeration~arguments).

Turning to the details, we pick~$p$ as a maximizer of $\min\bigcpar{ \xot_N(p), \xone_N(p), \xtwo_N(p)}$.
By comparing the asymptotic estimate~\eqref{eq:firstorderofmin} with the  asymptotics~\eqref{eq:firstorderonN} of~$n_N$, it follows from Lemma~\ref{lem:analytic} that there is a constant~$\xi=\xi(p_1,p_2) \in (0,1/2)$ such that~${p \in [\xi,1-\xi]}$ for all sufficiently large~$N$ (otherwise the first order asymptotics of~\eqref{eq:firstorderofmin} and~\eqref{eq:firstorderonN} would differ, contradicting our choice of~$p$).
To avoid clutter,  we~define
\begin{equation}\label{eq:n:LB:m}
n:=\floor{\nN-\eps_N} \qquad \text{and} \qquad m:=\floor{p\tbinom{n}{2}}.
\end{equation}
Recall that in Section~\ref{sec:proofcontain} we introduced the set~$\cT_{n,m} = \cA_n \cap \cE_{n,m}$ of pseudorandom graphs with~$n$~vertices and~$m$~edges, where~$\cA_n$ and~$\cE_{n,m}$ are defined as in \refS{sec:pseudorandom}. 
Since we are only interested in the existence of isomorphisms between induced subgraphs, we now define~$\T$ as the unlabeled variant of~$\cT_{n,m}$, which can formally be constructed by ignoring labels of the graphs in~$\cT_{n,m}$.
Since any graph in~$\cT_{n,m} \subseteq \cA_n$ is asymmetric, we have~$|\T| = |\cT_{n,m}|/n!$, and so it follows from Lemma~\ref{lem:typical} that 
\begin{equation}\label{eq:sizet}
    |\T|= (1+o(1)) \binom{\binom{n}{2}}{m}\frac{1}{n!}.
\end{equation}
As in Section~\ref{sec:commonfirst}, let $\Xone_H$ and $\Xtwo_H$ denote the number of induced copies of $H$ in $G_{N,p_1}$ and $G_{N,p_2}$, respectively. We then define the random variable 
\begin{equation}\label{eq:defX}
    X := \sum_{H\in \T} \Xone_H\Xtwo_H ,
\end{equation}
where~$X>0$ implies that $G_{N,p_1}$ and $G_{N,p_2}$ have a common induced subgraph on $n$ vertices, i.e., that~${I_N \ge n}$. 
To complete the proof of~$\Pr(I_N \ge \floor{\nN-\eps_N})=1-o(1)$, using Chebyshev's inequality it thus suffices to show that~$\E X \to \infty$ and $\Var X = o\bigpar{(\E X)^2}$ as~$N \to \infty$. 

We start by showing that~$\E X \to \infty$ as~$N \to \infty$.  
Analogous to Section~\ref{sec:proofcontain} we have ${\Xonetwo_H=\sum_{R_i\in \binom{[N]}{n}} \ionetwo_{H,R_i}}$, 
where~$\ionetwo_{H,R_i}$ is the indicator random variable for the event that the induced subgraph~$G_{N,p_i}[R_i]$ is isomorphic to~$H$.
Note that every unlabeled graph~$H \in \T$ satisfies the asymmetry property~$\cA_{n,m}$ from \refS{sec:pseudorandom}, so that~$|\Aut(H)|=1$.
It thus follows similarly to~\eqref{eq:keyestimate:XH1XH2:1} in Section~\ref{sec:commonfirst} that 
\begin{equation}\label{eq:mu:EX}
\mu :=\E X = \sum_{H\in \T} \E\Xone_H \E\Xtwo_H = |\T| \cdot \tbinom{N}{n}^2 \cdot  \mu_1\mu_2 ,
\end{equation}
where
\begin{equation}\label{eq:mu12}
\mu_i := \E \ionetwo_{H,R_i}=n!p_i^m(1-p_i)^{\binom{n}{2}-m}.
\end{equation}
Inspecting the form of~\eqref{eq:mu:EX} and~\eqref{eq:cgnmcardinality}, 
note that the asymptotic estimate~\eqref{eq:sizet} of~$|\T|$ allows us to estimate~$\E X$ analogously to~$\E\sum_{H \in \mathcal{G}_{n,m}}\Xone_H\Xtwo_H$ in~\eqref{eq:keyestimate:XH1XH2:1}--\eqref{eq:mu0est}. 
Indeed, using~${b_0=b_0(p) \ge \lambda=\lambda(p_1,p_2) > 1}$ and~${n \le \nN-\eps_N \le x_N^{(0)} - \eps_N}$ (which implies~$x_N^{(0)} \ge n$ and $x_N^{(0)}-n \geq \eps_N$) it here follows~that
\begin{equation}\label{eq:mu:EX:asymp}
\begin{split}
\E X &=  (1+o(1)) \binom{\binom{n}{2}}{m}\frac{1}{n!} \cdot (N)_n^2  \bigpar{p_1p_2}^{p\binom{n}{2}} \bigpar{(1-p_1)(1-p_2)}^{(1-p)\binom{n}{2}}\\
    & = e^{O(\log n)} \frac{N^{2n}}{n!} {b_0}^{-\binom{n}{2}} =
\lrsqpar{e^{O(\log n/n)}\tfrac{x^{(0)}_N}{n} b_0^{(x^{(0)}_N-n)/2}}^n  \geq {\lambda}^{\Omega(\eps_N n)} \geq n^{\Omega(\log n)}  \to \infty . 
\end{split}
\end{equation}

The remainder of this section is devoted  to showing that $\Var X = o\bigpar{(\E X)^2}$. 
Note that~$I^{(1)}_{H,R_1}$ and~$I^{(2)}_{H',R_2}$ are always independent. 
Since~$I^{(i)}_{H,R_i}$ and $I^{(i)}_{H',S_i}$ are independent when~$\abs{R_i \cap S_i} \leq 1$, it follows~that 
\begin{equation}\label{eq:VarX}
\var X \leq \sum_{\substack{0 \le \ell_1,\ell_2 \le n:\\ \max\{\ell_1,\ell_2\}\geq 2}}\underbrace{\sum_{H,H'\in \T} \Biggpar{\sum_{\substack{R_1,S_1 \in \binom{[N]}{n}:\\ |R_1\cap S_1|=\ell_1}} \E \ione_{H,R_1}\ione_{H',S_1}}\biggpar{\sum_{\substack{R_2,S_2 \in \binom{[N]}{n}:\\ |R_2\cap S_2|=\ell_2}}\E \itwo_{H,R_2}\itwo_{H',S_2}}}_{=: w_{\ell_1,\ell_2}} .
\end{equation}
Our upcoming estimates of this somewhat elaborate variance expression use a case distinction depending on whether~$\ell_1\ge \max\{\ell_2,2\}$ or~$\ell_2\ge \max\{\ell_1,2\}$ holds. 
Both cases can be handled by the same argument (with the roles of~$G_{N,p_1}$ and~$G_{N,p_2}$ in the below definition~\eqref{eq:VarX:well} of~$u_{\ell_1}$ and~$v_{\ell_2}$ interchanged), so we shall henceforth focus on the case where~$\ell_1\ge \max\{\ell_2,2\}$ holds. 
In this case we find it convenient to~estimate
\begin{equation}\label{eq:VarX:well}
\frac{w_{\ell_1,\ell_2}}{\mu^2}
\le \underbrace{\frac{\sum_{H,H'\in \T}\sum_{R_1,S_1:|R_1\cap S_1|=\ell_1} \E \ione_{H,R_1}\ione_{H',S_1}}{|\T|^2\binom{N}{n}^2\mu_1^2}}_{=:\u_{\ell_1}} \cdot \underbrace{\max_{H,H'\in \T} \frac{\sum_{R_2,S_2:|R_2\cap S_2|=\ell_2}\E \itwo_{H,R_2}\itwo_{H',S_2}}{\binom{N}{n}^2\mu_2^2}}_{=:\v_{\ell_2}},
\end{equation}
where the sums are taken over all vertex sets $R_1,S_1 \in \tbinom{[N]}{n}$ and $R_2,S_2 \in \binom{[N]}{n}$, as in~\eqref{eq:VarX} above.
This splitting allows us to deal with one random graph~$G_{N,p_i}$ at a time, 
which we shall exploit when bounding~$u_{\ell_1}$ and~$v_{\ell_2}$ in the upcoming Sections~\ref{sec:vell}--\ref{sec:uell}. 
For later reference, we now define the (sufficiently small)~constant
\begin{equation}\label{eq:def:zeta}
\zeta := \min\Biggcpar{\frac{\log b_0}{4\log \max\Bigcpar{\frac{1}{p_1},\frac{1}{1-p_1},\frac{1}{p_2},\frac{1}{1-p_2}}}, \: \frac{1}{2}}.
\end{equation}

\subsubsection{Contribution of~$G_{N,p_2}$ to variance: Bounding $\v_{\ell_2}$}\label{sec:vell}
We first bound the parameter~$v_{\ell_2}$ defined in~\eqref{eq:VarX:well} for~$0 \le \ell_2 \le n$, using similar arguments as for the variance calculations from Section~\ref{sec:proofcontain}. 
To avoid clutter, in the following we will drop the index from~$\ell_2$ and simply write~$\ell=\ell_2$ for brevity. 
We start with the pathological case~$0 \le \ell \le 1$, where in view of~\eqref{eq:mu12} we trivially (due to independence of~$\itwo_{H,R_2}$ and~$\itwo_{H',S_2}$ when~$|R_2\cap S_2| \le 1$) have
\begin{equation}\label{eq:vell:patho}
v_{\ell} = \max_{H,H'\in \T} \frac{\sum_{R_2,S_2:|R_2\cap S_2|=\ell}\E \itwo_{H,R_2} \E \itwo_{H',S_2}}{\binom{N}{n}^2\mu_2^2} \le 1 \qquad \text{when~$0 \le \ell \le 1$.}
\end{equation}

It remains to bound~$v_{\ell}$ for~$2 \le \ell \le n$.
Here we shall reuse some ideas from  Section~\ref{sec:proofcontain}: 
bounding the numerator in the definition~\eqref{eq:VarX:well} of~$v_{\ell}$ as in~\eqref{eq:generalineq0}--\eqref{eq:generalineq1},
in view of~$\mu_2={n!p_2^m(1-p_2)^{\binom{n}{2}-m}}$ it follows that  
\begin{equation}\label{eq:bound:vell}
\begin{split}
\v_\ell &\le \frac{\binom{N}{n}\binom{n}{\ell}\binom{N-n}{n-\ell} \cdot n! \cdot (n)_{n-\ell}}{\binom{N}{n}^2(n!)^2} \cdot \max_{H\in \T, \: |L|=\ell} |\Aut(H[L])| \\
& \phantom{\le \frac{\binom{N}{n}\binom{n}{\ell}\binom{N-n}{n-\ell} \cdot n! \cdot (n)_{n-\ell}}{\binom{N}{n}^2(n!)^2}} \cdot \max_{\substack{H\in \T, \: |L|=\ell}}\frac{p_2^{2m-e(H[L])} (1-p_2)^{2\binom{n}{2}-2m-\binom{\ell}{2}+e(H[L])}}{ p_2^{2m} (1-p_2)^{2\binom{n}{2}-2m}}\\
&\le \underbrace{\frac{(N-n)_{n-\ell}}{(N)_n}}_{=(1+o(1))N^{-\ell}} \cdot  \binom{n}{\ell}^2
\cdot 
\max_{H\in \T, \: |L|=\ell} |\Aut(H[L])|
\cdot \underbrace{\max_{H\in \T, \: |L|=\ell}{p_2^{-e(H[L])} (1-p_2)^{-\binom{\ell}{2}+e(H[L])}}}_{=:P_{\ell,2}},
\end{split}
\end{equation}
where the four maxima in~\eqref{eq:bound:vell} are each taken over all vertex-subsets $L \subseteq V(H)$ of size~$|L|=\ell$, as before. 
We define the parameter~$P_{\ell,1}$  analogously to~$P_{\ell,2}$ (by replacing~$p_2$ with~$p_1$).
Since every unlabeled graph ${H \in \T}$ satisfies the pseudorandom edge-properties of~$\cE_{n,m}$ from Section~\ref{sec:pseudorandom}, 
it follows that 
\begin{equation}\label{eq:gbound:0}
\begin{split}
P_{\ell,i}
& \le \max_{k: \abs{k-\binom{\ell}{2}m/\binom{n}{2}}\le n^{2/3}(n-\ell)} p_i^{-k} (1-p_i)^{-\binom{\ell}{2}+k}
\end{split}
\end{equation}
for any~$0 \le \ell \le n$, which in view of~$m=\floor{p\tbinom{n}{2}}$ 
yields, similarly to~\eqref{eq:case1contain} and~\eqref{eq:case2contain} from Section~\ref{sec:proofcontain}, that 
\begin{equation}\label{eq:gbound}
\begin{split}
P_{\ell,i}
& \le \begin{cases}
        \biggsqpar{\max\biggcpar{\frac{1}{p_i},\frac{1}{1-p_i}}}^{\binom{\ell}{2}} \quad & \text{if~$0 \le \ell\le \zeta n$,} \\
        e^{O\lrpar{n^{2/3}(n-\ell)+1}} \cdot \lrpar{{p_i^p (1-p_i)^{1-p}} }^{-\binom{\ell}{2}} \quad 
        & \text{if~$\zeta n\le \ell\le n$.}
    \end{cases}
\end{split}
\end{equation}
After these preparations, we are now ready to bound~$\v_\ell$ further using a case distinction, 
where~$\zeta$ is defined as in~\eqref{eq:def:zeta}.

\textbf{Case $2\le \ell\le \zeta n$:} 
Here we proceed similarly to~\eqref{eq:case1contain}--\eqref{eq:generalineq:case1}, and exploit that for all large enough~$N$ we~have 
\begin{equation}\label{eq:bound:ell}
\ell \leq \zeta n \leq \zeta \cdot \nN \le \zeta \cdot x^{(0)}_N(p) \leq \zeta \cdot \frac{4\log N}{\log b_0} \leq \frac{\log N}{\log \max\Bigcpar{\frac{1}{p_1},\frac{1}{1-p_1},\frac{1}{p_2},\frac{1}{1-p_2}}}
\end{equation}
by our choice of~$\zeta$.
Inserting~\eqref{eq:gbound} and the trivial bound~$\abs{\Aut(H[L])} \leq \abs{L}! = \ell!$ into~\eqref{eq:bound:vell}, 
using~$\binom{n}{\ell} \le n^\ell/\ell!$ and~\eqref{eq:bound:ell} it follows for all large enough~$N$ that
\begin{align}\label{eq:cestimate1}
\v_\ell& \leq 2 N^{-\ell} \cdot \binom{n}{\ell}^2
\ell! \cdot P_{\ell,2} 
	\leq \Biggpar{N^{-1}n^{2} \cdot  \biggsqpar{\max\biggcpar{\frac{1}{p_2},\frac{1}{1-p_2}}}^{(\ell-1)/2}}^{\ell} 
 \le N^{-\ell/3} \le e^{-\Omega(n)}.
\end{align}

\textbf{Case $\zeta n\le \ell\le  n$:} 
Here we refine the previous argument, 
proceeding similarly to~\eqref{eq:generalineq:case2}. 
For~$\zeta n \le \ell < n - n^{2/3}$ we estimate~$|\Aut(H[L])| \leq \ell! \le n^{\ell} \le e^{O(n^{2/3}(n-\ell))}$, and for~$n - n^{2/3} \le \ell \le n$ 
we have~$|\Aut(H[L])|=1$ since 
every unlabeled graph~$H \in \T$ satisfies the asymmetry property~$\cA_{n,m}$ from \refS{sec:pseudorandom}. 
Inserting these estimates and~$\binom{n}{\ell}=\binom{n}{n-\ell} \le n^{n-\ell} = e^{O(n^{2/3}(n-\ell))}$ into~\eqref{eq:bound:vell}, using~\eqref{eq:gbound} it  follows~that 
\begin{equation}\label{eq:cestimate4}
\begin{split}
\v_\ell &\leq 2 N^{-\ell} \cdot \binom{n}{\ell}^2 \cdot e^{O\lrpar{n^{2/3}(n-\ell)+1}} \cdot P_{\ell,2} 
\leq O(1) \cdot \Bigsqpar{N^{-1}(p_2^p(1-p_2)^{1-p})^{-(\ell-1)/2} e^{O\lrpar{n^{-1/3}(n-\ell)}}}^\ell,
\end{split}
\end{equation}
which in \refS{sec:well} will later turn out to be a useful upper bound.

\subsubsection{Contribution of~$G_{N,p_1}$ to variance: Bounding $\u_{\ell_1}$}\label{sec:uell}
We now bound the parameter~$u_{\ell_1}$ defined in~\eqref{eq:VarX:well} for~$2 \le \ell_1 \le n$. 
To avoid clutter, in the following we will drop the index from~$\ell_1$ and simply write~$\ell=\ell_1$ for brevity. 
For the simpler case $2\le \ell\le \zeta n$ we shall reuse the argument from Section~\ref{sec:vell}, while the more elaborate case $\zeta n\le \ell\le n$ requires further new~ideas. 
Recall that~$\zeta$ is defined as in~\eqref{eq:def:zeta}.  

\textbf{Case  $2\le \ell\le \zeta n$:}
Using the pair $H,H'\in \T$ which maximizes the summand as an upper bound reduces this case to the analogous bounds for~$\v_{\ell}$ 
from Section~\ref{sec:vell}. Indeed, by proceeding this way we obtain that 
\begin{equation}\label{eq:bestimate1}
    \u_\ell \le \max_{H,H'\in \T} \frac{\sum_{R_1,S_1:|R_1\cap S_1|=\ell}\E \ione_{H,R_1}\ione_{H',S_1}}{\binom{N}{n}^2\mu_1^2}\le e^{-\Omega(n)},
\end{equation}
where the last inequality follows word-by-word (with $p_2$ replaced by $p_1$, and~$P_{\ell,2}$ replaced by~$P_{\ell,1}$)
from the arguments leading to~\eqref{eq:bound:ell}--\eqref{eq:cestimate1}.

\textbf{Case $\zeta n \le \ell \leq n$:} 
Here one key idea is to interchange the order of summation in the numerator to~obtain
\begin{equation}\label{eq:bound:uell}
\u_{\ell} = 
\frac{\sum_{R_1,S_1:|R_1\cap S_1|=\ell}\sum_{H,H'\in \T}\E \ione_{H,R_1}\ione_{H',S_1}}{\binom{N}{n}^2|\T|^2\mu_1^2} ,
\end{equation}
which then allows us to exploit that not too many choices of the pseudorandom graphs~$H,H' \in \T$ can intersect in a `compatible' way. 
Indeed, if we proceeded similarly to the argument leading to~\eqref{eq:bound:vell}, \eqref{eq:cestimate1} and~\eqref{eq:bestimate1}, 
then after choosing~${R_1,S_1 \in \binom{[N]}{n}}$ with $\abs{R_1 \cap S_1} = \ell$, 
in the numerator of~\eqref{eq:bound:uell} we would use
\begin{equation*}
|\T|^2 \cdot n! \cdot (n)_{n-\ell} \cdot \max_{H\in \T,  \: |L|=\ell} |\Aut(H[L])| \: \le \:  |\T|^2 (n!)^2 
\end{equation*}
as a simple upper bound  on the number~$\Lambda_{\ell}(R_1,S_1)$ of labeled graphs~$F$ on~$R_1 \cup S_1$ such that~$F[R_1]$ and~$F[S_1]$ are isomorphic to some~$H$ and~$H'$, respectively, where~$H,H' \in \T$. 
Here our key improvement idea is to more carefully enumerate all possible such graphs~$F$, by first choosing the edges in the intersection~$R_1 \cap S_1$ of size~$|R_1 \cap S_1|=\ell$, and only then the remaining edges of~$F$. 
The crux is that since all possible~$H,H' \in \T$ satisfy the pseudorandom edge-properties of~$\cE_{n,m}$ from Section~\ref{sec:pseudorandom}, 
we know in advance that all possible numbers~$k$ of edges inside~$R_1 \cap S_1$ must satisfy $\bigabs{k-\binom{\ell}{2}m/\binom{n}{2}} \le n^{2/3}(n-\ell)$. 
Hence, by first choosing~$k$ edges in the intersection~$R_1 \cap S_1$, and then the remaining~$2(m-k)$ edges, 
using~$m=\floor{p\binom{n}{2}}$ it follows~that 
\begin{equation}\label{eq:bound:Lambda}
\begin{split}
\Lambda_{\ell}(R_1,S_1) 
&\le \sum_{k: \abs{k-\binom{\ell}{2}m/\binom{n}{2}}\le n^{2/3}(n-\ell)} \binom{\binom{\ell}{2}}{k} \binom{2\bigsqpar{\binom{n}{2}-\binom{\ell}{2}}}{2(m-k)}\\
&\le n^2 \cdot \binom{\binom{\ell}{2}}{\floor{p\binom{\ell}{2}}}\binom{2\bigsqpar{\binom{n}{2}-\binom{\ell}{2}}}{\floor{2p\bigsqpar{\binom{n}{2}-\binom{\ell}{2}}}}  \cdot n^{O\lrpar{n^{2/3}(n-\ell)+1}} \\
&\le \lrpar{{p^p(1-p)^{1-p}}}^{-2\binom{n}{2}+\binom{\ell}{2}} \cdot e^{O\lrpar{\log n+n^{3/4}(n-\ell)}} ,
\end{split}
\end{equation}
where for the last inequality we used Stirling's approximation formula similarly to~\eqref{eq:entropyestimate}. 
Using again Stirling's approximation formula similarly to~\eqref{eq:entropyestimate}, from the asymptotic estimate~\eqref{eq:sizet} for~$|\T|$ it follows~that 
\begin{equation}\label{eq:estimate:T}
|\T|^2(n!)^2= (1+o(1)) \binom{\binom{n}{2}}{m}^2 = e^{O(\log n)} \lrpar{{p^p(1-p)^{1-p}}}^{-2\binom{n}{2}},
\end{equation}
which in view of~$\binom{\ell}{2}=\Theta(n^2)$ makes it transparent that the refined upper bound~\eqref{eq:bound:Lambda} on~$\Lambda_{\ell}(R_1,S_1)$ is significantly smaller than the simple upper bound~$|\T|^2(n!)^2$ mentioned above.  
After these preparations, we are now ready to estimate~$\u_\ell$ as written in~\eqref{eq:bound:uell}: namely, (i)~we bound the numerator as in~\eqref{eq:bound:vell}, the key difference being that we use~$\Lambda_{\ell}(R_1,S_1)$ to account for the choices of ${H,H'\in \T}$ and their embeddings into~$R_1$ and~$S_1$, and (ii)~we also use~\eqref{eq:estimate:T} to estimate the denominator in~\eqref{eq:bound:uell}. 
Taking these differences to~\eqref{eq:bound:vell} into account, using~$\mu_1={n!p_1^m(1-p_1)^{\binom{n}{2}-m}}$ it then follows that  
\begin{equation*}
\begin{split}
\u_{\ell} & \le \frac{\binom{N}{n}\binom{n}{\ell}\binom{N-n}{n-\ell}}{\binom{N}{n}^2|\T|^2(n!)^2}\cdot 
\max_{R_1,S_1:|R_1\cap S_1|=\ell}\Lambda_{\ell}(R_1,S_1) 
\cdot \max_{\substack{H\in \T, \: |L|=\ell}}\frac{p_1^{2m-e(H[L])} (1-p_1)^{2\binom{n}{2}-2m-\binom{\ell}{2}+e(H[L])}}{ p_1^{2m} (1-p_1)^{2\binom{n}{2}-2m}}\\
&\le (1+o(1))N^{-\ell} \cdot  \binom{n}{\ell}^2\ell! \cdot \lrpar{{p^p(1-p)^{1-p}}}^{\binom{\ell}{2}} e^{O\lrpar{\log n+n^{3/4}(n-\ell)}}  
\cdot P_{\ell,1} ,
\end{split}
\end{equation*}
where~$P_{\ell,1}$ is defined as below~\eqref{eq:bound:vell}. 
Combining the upper bound~\eqref{eq:gbound:0}--\eqref{eq:gbound} for~$P_{\ell,1}$ with the definition~\eqref{def:b} of~${b_1=b_1(p)} = {(p/p_1)^p[(1-p)/(1-p_1)]^{1-p}}$,
using Stirling's approximation formula~$\ell! = e^{O(\log n)}(\ell/e)^{\ell}$ and~$\binom{n}{\ell}^2=\binom{n}{n-\ell}^2\le n^{2(n-\ell)}=e^{O(n^{3/4}(n-\ell))}$ it follows that  
\begin{align}\label{eq:bound:uell:prep}
    u_\ell 
 \le \lrpar{\tfrac{\ell}{eN}}^{\ell} \cdot b_1^{\binom{\ell}{2}} \cdot e^{O\lrpar{\log n+n^{3/4}(n-\ell)}} 
    \le \lrsqpar{e^{O\lrpar{\log n/n+n^{-1/4}(n-\ell)}} \tfrac{\ell}{eN} b_1^{(\ell-1)/2}  }^\ell .
\end{align}
We now find it convenient to treat the case where~$b_1$ is close to~$1$ separately from the case where~$b_1$ is bounded away from~$1$. 
From~\eqref{eq:firstorderonN} we know that~$\ell\le n=O(\log N)$, so there exists a constant $\rho=\rho(p_1,p_2)>0$ such that $b_1\le 1+\rho$ implies $b_1^{(\ell-1)/2}\le e^{\rho(\ell-1)/2}\le \sqrt{N}$.
In case of~$b_1\le 1+\rho$ we thus~infer~that 
\begin{equation}\label{eq:322b1small}
    u_\ell \le 
    \lrsqpar{e^{O(n^{3/4})} \tfrac{\ell}{eN} b_1^{(\ell-1)/2}  }^\ell\le \lrsqpar{ e^{O(n^{3/4})} \tfrac{\ell}{\sqrt{N}}}^\ell\le n^{-\Omega(\log n)}.
\end{equation} 
In case of~$b_1\ge 1+\rho$ we insert the identity~$eN = x^{(1)}_Nb_1^{(x^{(1)}_N-1)/2}$ from~\eqref{def:xi} 
into estimate~\eqref{eq:bound:uell:prep}, 
and then use ${\zeta n \le \ell\le n} \le {n_N-\eps_N} \le {\xone_N- \eps_N}$  
(which implies~$|n-\ell| \le \xone_N-\ell$ and~$\ell\le \xone_N$ as well as~$\xone_N-\ell \geq \eps_N$ and~$\ell \ge \zeta n$)
 to infer~that 
\begin{equation}\label{eq:322b1large}
u_\ell \le  \lrsqpar{e^{O(\log n/n)+o(n-\ell)} \tfrac{\ell}{x^{(1)}_N} b_1^{-(x^{(1)}_N-\ell)/2}  }^\ell
\le  \lrsqpar{e^{O(\log n/n)}  (1+\rho)^{-(x^{(1)}_N-\ell)/3} }^\ell 
\le e^{-\Omega(\eps_Nn)}\le n^{-\Omega(\log n)} .
\end{equation}
To sum up: from the two estimates~\eqref{eq:322b1small}--\eqref{eq:322b1large} it  follows that we always have 
\begin{equation}\label{eq:bestimate2}
 \u_{\ell} \leq n^{-\Omega(\log n)} \qquad \text{when~$\zeta n \le \ell \le n$.}
 \end{equation}

\subsubsection{Putting things together: Bounding $w_{\ell_1,\ell_2}/\mu^2$}\label{sec:well}
Using the estimates from Sections~\ref{sec:vell}--\ref{sec:uell}, 
in the case~$\ell_1\ge \max\{\ell_2,2\}$ we are now ready to bound the parameter~$w_{\ell_1,\ell_2}/\mu^2 \le \u_{\ell_1}\v_{\ell_2}$ from~\eqref{eq:VarX:well} using a case distinction based on whether or not $\ell_2\le \zeta n$. 

\textbf{Case~$0 \le \ell_2\le \zeta n$ and~$2 \le \ell_1 \le n$:} 
From~\eqref{eq:vell:patho} and~\eqref{eq:cestimate1}, we infer that $v_{\ell_2}\le 1$ for all $0 \le \ell_2\le \zeta n$. 
From~\eqref{eq:bestimate1} and~\eqref{eq:bestimate2}, we infer that $u_{\ell_1}\le n^{-\Omega(\log n)}$ for all~$2 \le \ell_1 \le n$. Hence
\begin{equation}\label{eq:bound:uv:1}
\u_{\ell_1}\v_{\ell_2} \le n^{-\Omega(\log n)}.
\end{equation}

\textbf{Case~$\zeta n\le \ell_2 \le \ell_1\le n$:}.
Here we split our analysis into two cases, based on whether or not the upper bound~\eqref{eq:cestimate4} on $v_{\ell_2}$ is effective on its own. 
We start with the case where~$N(p_2^p(1-p_2)^{1-p})^{
(\ell_2-1)/2}\ge e^{n^{3/4}}$ holds: here estimate~\eqref{eq:cestimate4} implies~$\v_{\ell_2}\le e^{-\Omega(\ell_2n^{3/4})}=o(1)$, which together with~\eqref{eq:bestimate2} 
yields that
\begin{equation}\label{eq:bound:uv:2}
\u_{\ell_1}\v_{\ell_2}
\leq n^{-\Omega(\log n)}.
\end{equation}
We henceforth consider the remaining case, where in view of~$\ell_1 = \Omega(n)$ we have 
\begin{equation*}
N(p_2^p(1-p_2)^{1-p})^{
(\ell_1+\ell_2-1)/2}\le e^{n^{3/4}} \cdot (p_2^p(1-p_2)^{1-p})^{\ell_1/2}\le  e^{-\Omega(n)} .
\end{equation*}
By combining the estimates~\eqref{eq:bound:uell:prep} and~\eqref{eq:cestimate4} for~$\u_{\ell_1}$ and~$\v_{\ell_2}$ with~$\binom{\ell_2}{2}={\binom{\ell_1}{2}-(\ell_1-\ell_2)(\ell_1+\ell_2-1)/2}$ as well as~$b_0 = {b_1 \cdot p_2^{-p}(1-p_2)^{-(1-p)}}$ and the identity~\eqref{def:x0}, 
in view of~$\ell_2=O(n)$ and $\ell_1 = \Omega(n)$ it follows that 
\begin{equation*}
\begin{split}
    \u_{\ell_1} \v_{\ell_2}&\le \Bigpar{\lrpar{\tfrac{\ell_1}{eN}}^{\ell_1}  b_1^{\binom{\ell_1}{2}}}
 \cdot \Bigpar{ N^{-\ell_2} (p_2^p(1-p_2)^{1-p})^{-\binom{\ell_2}{2}}} \cdot e^{O\lrpar{\log n + n^{3/4}(n-\ell_2)}} \\
    &\le \lrsqpar{ \tfrac{\ell_1}{eN^2} b_0^{(\ell_1-1)/2}  }^{\ell_1} \cdot \lrsqpar{N(p_2^p(1-p_2)^{1-p})^{(\ell_1+\ell_2-1)/2}}^{\ell_1-\ell_2} \cdot e^{O\lrpar{\log n +n^{3/4}(n-\ell_1) + n^{3/4}(\ell_1-\ell_2)}}  \\
    &\leq \lrsqpar{e^{O(\log n/n)+o(n-\ell_1)}\tfrac{\ell_1}{x^{(0)}_N} b_0^{-(x^{(0)}_N-\ell_1)/2}}^{\ell_1} \cdot e^{-\Omega(n(\ell_1-\ell_2))} .
\end{split}
\end{equation*}
Recall that by Lemma~\ref{lem:analytic}~\ref{item:bounded} we have~$b_0=b_0(p)\ge \lambda$ for some constant~${\lambda=\lambda(p_1,p_2)>1}$.
Using~${\zeta n \le \ell_1 \le n} \le {n_N-\eps_N} \le {x^{(0)}_N-\eps_N}$ 
(which implies~$|n-\ell_1| \le x^{(0)}_N-\ell_1$ and~$\ell_1 \le x^{(0)}_N$ as well as~$x^{(0)}_N-\ell_1 \ge \eps_N$ and~$\ell_1 \ge \zeta n$) it follows~that 
\begin{equation}\label{eq:bound:uv:4}
\begin{split}
    \u_{\ell_1} \v_{\ell_2}&\le \lrsqpar{e^{O(\log n/n)} \lambda^{-(x^{(0)}_N-\ell_1)/3}}^{\ell_1} \le 
    e^{-\Omega(\eps_Nn)}
\le n^{-\Omega(\log n)}.
\end{split}
\end{equation}

To sum up: in the case~$\ell_1\ge \max\{\ell_2,2\}$ we each time obtained~$w_{\ell_1,\ell_2}/\mu^2 \le  n^{-\Omega(\log n)}$. 
The same argument (with the roles of~$G_{N,p_1}$ and~$G_{N,p_2}$ interchanged in the definition~\eqref{eq:VarX:well} of~$u_{\ell_1}$ and~$v_{\ell_2}$) also gives~$w_{\ell_1,\ell_2}/\mu^2 \le  n^{-\Omega(\log n)}$ when~$\ell_2\ge \max\{\ell_1,2\}$. 
Inserting these estimates into~\eqref{eq:VarX} readily implies~$\Var X = o((\E X)^2)$.
As discussed, this completes the proof of the lower bound in~\eqref{eq:thmmain}, i.e., that whp~$I_N \ge  \floor{\nN-\eps_N}$, 
which together with the upper bound from  Section~\ref{sec:commonfirst} 
completes the proof of~Theorem~\ref{thm:maincommon}.\noproof

\begin{remark}\label{rem:sec:common}
From Theorem~\ref{thm:maincommon} it readily follows that there is a set~$M\subseteq \bN$ with density~$1$ 
such that, for any $N\in M$, we have~$\floor{n_N-\eps_N}=\floor{n_N+\eps_N}$ and thus whp~$I_N =\floor{n_N-\eps_N}$. 
We leave it as an interesting open problem to show, that for infinitely many~$N$, the size~$I_N$ can be equal to the each of the two different numbers~$\floor{n_N-\eps_N}$ and~$\floor{n_N+\eps_N}$ with probabilities bounded away from~$0$ (similar as for cliques in~$G_{N,p}$; see~\cite[Theorem~11.7]{BB}). 
\end{remark}

\section{Concluding remarks}\label{sec:final}
%
%
In this paper we resolved the induced subgraph isomorphism problem and the maximum common induced subgraph problem for dense random graphs, i.e., with constant edge-probabilities. 
Besides the convergence questions already mentioned in Sections~\ref{sec:contain:dist}, \ref{sec:contain:dist:poisson} and~\ref{sec:common} (see Remarks~\ref{rem:sec:contain:dist}, \ref{rem:sec:dist:poisson} and~\ref{rem:sec:common}),  
there are two main directions for further research: \linebreak[3]
extensions of our main results to sparse random graphs, where edge-probabilities~$p_i \to 0$ are allowed \mbox{(Problem~\ref{prb:sparse})}, 
and generalizations of the graph-sizes \mbox{(Problem~\ref{prb:general})}.   

\begin{problem}[Edge-Sparsity]\label{prb:sparse}
Prove versions of the induced subgraph isomorphism results Theorems~\ref{thm:contain}--\ref{thm:contain:dist} and maximum common induced subgraph result Theorem~\ref{thm:maincommon} for sparse random graphs. 
\end{problem}
As a first step towards such sparse extensions of our main results, one can initially aim at slightly weaker results in the sparse case.  
For example, in Theorem~\ref{thm:maincommon} one could instead try to show that typically~$I_N = (1+o(1))\Lambda_N$ for some explicit~$\Lambda_N=\Lambda_N(p_1,p_2)$.\
Furthermore, in Theorem~\ref{thm:contain} one could instead try to determine some explicit~$n^*=n^*(N,p_1,p_2)$ such that~$\Pr(G_{n,p_1} \isub G_{N,p_2})$ changes from~$1-o(1)$ to~$o(1)$ for~$n \le(1-\eps)n^*$ and~$n \ge (1+\eps) n^*$, respectively.
As another example, in Theorem~\ref{thm:maincommon} with~$p_1=p_2=p$ and~$p=p(N) \to 0$ we wonder if two-point concentration of~$I_N$ remains valid for~$p \gg N^{-2/3+\eps}$.

\begin{problem}[Generalization]\label{prb:general}
Fix constants~$p_1,p_2 \in (0,1)$. 
Determine, as~$N_1,N_2 \to \infty$, the typical size~$I_{N_1,N_2}$ of the maximum common induced subgraph of the independent random graphs~$G_{N_1,p_1}$ and~$G_{N_2,p_2}$.
\end{problem}
This problem aims at a common generalization of our main results, 
since for~$N_1 \le N_2$ the two extreme cases $N_1 \le 2\log_a N_2-\omega(1)$ and~$N_1=N_2$ 
are already covered by Theorems~\ref{thm:contain} and~\ref{thm:maincommon}. 
So the real question is what happens in-between: 
will, similar as in this paper, new concentration phenomena~occur?

\bigskip{\noindent\bf Acknowledgements.} 
We are grateful to the referees for useful suggestions concerning the presentation. 
We also would like to thank Svante Janson for suggesting the inclusion of Corollary~\ref{cor:contain}.

\footnotesize

\normalsize

\appendix

\section{Locating the parameter~$n_N$ from Theorem~\ref{thm:maincommon}}
\label{sec:calcstuff}
In this appendix we approximately determine the value of the parameter~$n_N$ from Theorem~\ref{thm:maincommon}, which in~\eqref{def:nN} of Section~\ref{sec:introcommon} is defined as the solution to an optimization problem over all edge-densities~$p \in [0,1]$. 
In particular, 
Lemma~\ref{lem:figcaption} below  locates~$n_N$ in terms of the unique minimizer~$p_0$ of the auxiliary function 
\begin{equation}\label{def:gp:2}
g(p)=g_{p_1,p_2}(p):=\max\bigcpar{\log b_0(p), \: 2\log b_1(p), \: 2\log b_2(p)} \qquad \text{for~$p \in [0,1]$,}
\end{equation}
where the functions~$b_0(p)$ and~$b_i(p)$ that depend on~$p_1,p_2$ are defined as in~\eqref{def:b}. 
Using the standard convention that~$0 \log 0 = \lim_{x \searrow 0} x\log x = 0$ (which is consistent with~$0^0=1$), for~$i \in \{1,2\}$ we here continuously extend $\log b_i(p)$ to~$p \in \{0,1\}$, as usual. 
As in Section~\ref{sec:introcommon}. we also introduce the parameter
\begin{equation}\label{def:phat:2}
\ps = \ps(p_1,p_2) := \frac{p_1p_2}{p_1p_2+(1-p_1)(1-p_2)} 
\end{equation}
that occurs in the different cases of Lemma~\ref{lem:figcaption}, which each assume information about the form of~$g(\ps)$. 
In~\eqref{eq:nN:x0}--\eqref{eq:nN:x1} below we use $a_N \sim b_N$ as a shorthand for~$a_N=(1+o(1))b_N$ as~$N \to \infty$, as~usual.
\begin{lemma}[Locating the parameter~$n_N$]\label{lem:figcaption}
Fix~$p_1,p_2 \in (0,1)$. 
Then the function~$g=g_{p_1,p_2}:[0,1] \to (0,\infty)$ from~\eqref{def:gp:2} 
 is strictly convex, and 
has a unique minimizer~$p_0 \in (0,1)$.
Furthermore, writing~$\ps=\ps(p_1,p_2)$ as in~\eqref{def:phat:2}, the following holds for~$p_0$ and the parameter~$n_N$ from Theorem~\ref{thm:maincommon} defined in~\eqref{def:nN}: 
\begin{romenumerate}
    \item\label{enum:a}  If $\log b_0(\hat{p}) > \max\bigcpar{2\log b_1(\hat{p}), \: 2\log b_2(\hat{p})}$, then $p_0 = \ps$. 
    Moreover, for all large enough~$N$, we~have
\begin{equation}\label{eq:nN:x0}
n_N=\xot_N(\ps) \sim 4 \log_{b_0(\ps)} N .
\end{equation}
    \item\label{enum:b} If $\log b_0(\ps)\le 2\log b_i(\ps)$ for~$i \in \{1,2\}$, then~$p_0$ is the unique solution of~$\log b_0(p)=2\log b_i(p)$, and~$p_0$ lies between~$\ps$ and~$p_i$, with~$p_0 \neq p_i$.  
    Moreover, for all large enough~$N$, we~have
\begin{equation}\label{eq:nN:x1}
n_N=\xonetwo_N(p_0) +O(\log \log N) 
\sim 2 \log_{b_i(p_0)} N .
\end{equation}%
\vspace{-0.25em}\end{romenumerate}\vspace{-0.25em}%
\end{lemma}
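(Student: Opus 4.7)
The plan is to first establish structural properties of the functions $\log b_j$ and hence of $g$, and then to analyze the two cases in turn. The starting point is a direct calculation: for $j \in \{0,1,2\}$, $\tfrac{d^2}{dp^2}\log b_j(p) = \tfrac{1}{p(1-p)} > 0$ on $(0,1)$, so each $\log b_j$ is strictly convex on $[0,1]$ (extended continuously via $0\log 0 = 0$), with unique minimizer $\ps$ for $j=0$ and $p_i$ for $j=i$. The pointwise maximum of finitely many strictly convex functions is again strictly convex, so $g$ is strictly convex and continuous on $[0,1]$, hence has a unique minimizer $p_0 \in [0,1]$; since $\tfrac{d}{dp}\log b_j(p) \to -\infty$ as $p\to 0^+$ and $+\infty$ as $p \to 1^-$ for every $j$, the one-sided derivatives of $g$ at the endpoints force $p_0 \in (0,1)$. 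A second key input, used repeatedly in case~(ii), is the identity
\begin{equation*}
\log b_0(p) \; = \; \log b_1(p) + \log b_2(p) + H(p), \qquad H(p) := -p\log p - (1-p)\log(1-p),
\end{equation*}
obtained by direct expansion of~\eqref{def:b}.

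For case~(i), the strict inequality $\log b_0(\ps) > 2\log b_i(\ps)$ and continuity yield a neighborhood of $\ps$ on which $g$ coincides with $\log b_0$. Since $\log b_0$ has its unique minimum at $\ps$, this is a local minimum of $g$, hence the global minimum by strict convexity, so $p_0 = \ps$. For the exact value $n_N = \xot_N(\ps)$, I plan to inspect the explicit formula~\eqref{def:x_N} and verify, for all sufficiently large $N$, that $\xot_N(p)$ is a strictly decreasing function of $\log b_0(p) \in [\log b_0(\ps),\infty)$; this gives $n_N \le \max_p \xot_N(p) = \xot_N(\ps)$. The matching lower bound follows because at $p=\ps$ the strict inequality propagates, via the asymptotics from~\refL{lem:analytic}, to $\xot_N(\ps) < \xonetwo_N(\ps)$ for both $i\in\{1,2\}$ and all large $N$, so the minimum in~\eqref{def:nN} at $\ps$ equals $\xot_N(\ps)$.

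For case~(ii), take $i=1$ (the case $i=2$ is symmetric) and define $h(p) := 2\log b_1(p) - \log b_0(p)$, which is strictly convex by the same second-derivative calculation. By hypothesis $h(\ps)\ge 0$, and $h(p_1) = -\log b_0(p_1) < 0$: here $\log b_0(p_1) > 0$ because $p_1\ne \ps$ in case~(ii) (if $p_1=\ps$, the hypothesis would force $\log b_0(\ps)\le 0$, contradicting $b_0(\ps)>1$). The intermediate value theorem combined with strict convexity of $h$ yields a unique zero $p_0$ in the closed interval between $\ps$ and $p_1$, and $p_0\ne p_1$. At $p_0$ the identity above forces $2\log b_2(p_0) = \log b_0(p_0) - 2H(p_0) < \log b_0(p_0)$, so $g(p_0) = \log b_0(p_0) = 2\log b_1(p_0)$, and by continuity $g$ agrees with $\max\{\log b_0, 2\log b_1\}$ in a neighborhood of $p_0$. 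Since the left derivative $(2\log b_1)'(p_0)<0$ (as $p_0$ lies before $p_1$) and the right derivative $(\log b_0)'(p_0)>0$ (as $p_0$ lies past $\ps$), $p_0$ is a local, hence by strict convexity the global, minimizer of $g$. The estimate $n_N = \xone_N(p_0) + O(\log\log N)$ then follows from the asymptotics in~\refL{lem:analytic}~\ref{item:bounded} together with the strict inequality $2\log b_2(p_0) < \log b_0(p_0) = 2\log b_1(p_0)$, which ensures that $\xone_N$ realizes the minimum of the three terms at $p_0$ up to lower-order corrections.

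The step I expect to be most delicate is the case-(i) claim of exact equality $n_N = \xot_N(\ps)$, rather than merely $\xot_N(\ps) + O(\log\log N)$: this requires a careful non-asymptotic monotonicity analysis of the explicit formula~\eqref{def:x_N} as a function of $\log b_0(p)$, uniform in $p$ over $[0,1]$ and valid for all sufficiently large $N$. A secondary subtlety in case~(ii) is the handling of the non-smoothness of $g$ at points where two of the $\log b_j$ coincide; the identity-based bound $2\log b_2(p_0) < \log b_0(p_0)$ above is precisely what prevents a third function from becoming active, allowing the one-sided-derivative argument to conclude.
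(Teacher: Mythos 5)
Your proposal follows essentially the same route as the paper: strict convexity of each $\log b_j$ from the common second derivative $1/[p(1-p)]$, hence strict convexity of $g$, and then the auxiliary function $h(p)=2\log b_1(p)-\log b_0(p)$ for case~(ii). One genuine stylistic difference worth flagging is that you use the identity $\log b_0 = \log b_1 + \log b_2 + H$ (with $H$ the binary entropy) to conclude $2\log b_2(p_0) < \log b_0(p_0)$; the paper instead computes $b_0(\pz)\cdot b_0(\pz)/b_1(\pz)^2 = \bigl[(1/p_2)^{\pz}(1/(1-p_2))^{1-\pz}\bigr]^2 \ge b_2(\pz)^2$ directly. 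Both are short and correct, and yours is perhaps slightly cleaner.

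There are two minor points you should tighten. First, the lemma asserts that $p_0$ is \emph{the unique} solution of $\log b_0(p)=2\log b_i(p)$ in $[0,1]$, but your IVT-plus-convexity step only gives uniqueness in the closed interval between $\ps$ and $p_i$; a strictly convex function can have a second zero outside that interval. The paper closes this by noting that $h(0)=\log\frac{1-p_2}{1-p_1}$ and $h(1)=\log\frac{p_2}{p_1}$ have opposite signs when $p_1\ne p_2$ (and $p_1=p_2$ lands in case~(i) by Corollary~\ref{cor:equalprob}), so the strictly convex $h$ has exactly one zero on all of $[0,1]$. Second, your one-sided derivative argument assumes the strict inequality $(\log b_0)'(p_0)>0$, but if the case-(ii) hypothesis holds with equality, i.e.\ $h(\ps)=0$, then $p_0=\ps$ and $(\log b_0)'(p_0)=0$. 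The conclusion is still fine — having the left derivative $\le 0$ and right derivative $\ge 0$ suffices for a convex function — but as written your inequality is wrong in this boundary case. The paper sidesteps this by instead proving directly that $f(p)=\max\{\log b_0,2\log b_1\}$ is decreasing on $[0,\pz]$ and increasing on $[\pz,1]$, using the monotonicity intervals of the two constituent functions, which handles the equality case without a separate argument.

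Also, your parenthetical justification that $\log b_0(p_1)>0$ via $p_1\ne\ps$ is an unnecessary detour: $\log b_0(p)\ge \log b_0(\ps)>0$ holds for every $p\in[0,1]$, so $h(p_1)=-\log b_0(p_1)<0$ unconditionally. None of these issues is a fundamental gap, but they should be patched in a final write-up.
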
\vspace{-0.25em}
The proof of Lemma~\ref{lem:figcaption} is based on standard calculus techniques (mainly using convexity), and is spread across the following subsections.
As a byproduct of these techniques, in Appendix~\ref{sec:deferred1} and~\ref{sec:keylemma:apx}
we also give the deferred proofs of the closely related results Corollary~\ref{cor:equalprob}, Remark~\ref{rem:savingold} and Lemma~\ref{lem:analytic}.

\subsection{Proofs of Remark~\ref{rem:savingold} and  Lemma~\ref{lem:analytic}} 
\label{sec:deferred1}
%
The functions $g(p)$ and ${\log b_j(p)}$ appearing in the following auxiliary lemma of course depend on~$p_1,p_2 \in (0,1)$, but to avoid clutter we henceforth suppress the dependence on~$p_1,p_2$ in the notation, as~usual.
\begin{lemma}\label{lem:strictlyconvex}
Fix~$p_1,p_2 \in (0,1)$. 
The functions $\log b_0(p)$, $\log b_1(p)$ and~$\log b_2(p)$ are strictly convex functions for $p\in [0,1]$, and achieve their unique minima in $[0,1]$ at $\ps$, $p_1$ and~$p_2$, respectively.
Furthermore, 
the function~$g:[0,1] \to (0,\infty)$ is strictly convex, and has a unique minimizer in~$[0,1]$ at~$p_0 \in (0,1)$.
\end{lemma}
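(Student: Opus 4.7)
The plan is to analyze each $\log b_j$ individually via direct differentiation, then propagate strict convexity to the pointwise maximum $g$, and finally locate the minimizer in the open interior.

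\textbf{Individual convexity and minimizers.} Using the definition~\eqref{def:b} and the convention $0\log 0 = 0$, I will expand
\[
\log b_i(p) \: = \: h(p) \: - \: \bigsqpar{p\log p_i + (1-p)\log(1-p_i)}
\]
for $i\in\{1,2\}$, and analogously for $\log b_0$ with $p_i$ and $1-p_i$ replaced by $p_1p_2$ and $(1-p_1)(1-p_2)$, where $h(p) := p\log p + (1-p)\log(1-p)$. Since the subtracted bracketed terms are linear in $p$, each second derivative equals $h''(p) = 1/\bigsqpar{p(1-p)} > 0$ on $(0,1)$, giving strict convexity on the open interval; continuity of $\log b_j$ at the endpoints $\{0,1\}$ then extends strict convexity to all of $[0,1]$ via a standard chord argument. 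Setting $(\log b_i)'(p) = \log\bigpar{p/(1-p)} - \log\bigpar{p_i/(1-p_i)}$ to zero will pin the unique minimizer at $p = p_i$, and the analogous equation for $\log b_0$ pins its minimizer at $p = \ps$ through~\eqref{def:phat:2}.

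\textbf{From the pieces to $g$.} For any $x\neq y$ in $[0,1]$ and $\lambda\in(0,1)$, I will pick an index attaining the maximum in the definition of $g$ at $\lambda x + (1-\lambda)y$ and combine strict convexity of that component $f_j \in \{\log b_0, 2\log b_1, 2\log b_2\}$ with the pointwise bound $f_j \le g$ to obtain
\[
g(\lambda x + (1-\lambda)y) \: = \: f_j(\lambda x + (1-\lambda)y) \: < \: \lambda f_j(x) + (1-\lambda) f_j(y) \: \le \: \lambda g(x) + (1-\lambda) g(y),
\]
establishing strict convexity of $g$ on $[0,1]$ and hence uniqueness of the minimizer $p_0$. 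To place $p_0$ in the open interval, I will exploit that each component has derivative $\log\bigpar{p/(1-p)}$ up to an additive constant, which tends to $-\infty$ as $p\to 0^+$ and to $+\infty$ as $p\to 1^-$; the same one-sided limiting behavior then carries over to $g$, making it strictly decreasing near $0$ and strictly increasing near $1$, which forces $p_0\in(0,1)$. Strict positivity $g>0$ will follow from $\log b_j \ge 0$ for all $j$, together with the computation $\log b_0(\ps) = -\log\bigsqpar{p_1p_2 + (1-p_1)(1-p_2)} > 0$ for $p_1,p_2\in(0,1)$, yielding $g:[0,1]\to(0,\infty)$.

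\textbf{Anticipated obstacle.} The analysis is elementary calculus; the only genuine care needed is in extending strict convexity across the closed-interval endpoints and handling the blow-up of the derivative at $p\in\{0,1\}$, both of which are standard technicalities (continuity plus interior strict inequality for the former, direct expansion of $\log b_j$ near the endpoints for the latter).
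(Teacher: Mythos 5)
Your proposal is correct and follows essentially the same route as the paper: explicit differentiation showing the second derivative is $1/[p(1-p)]$, solving the first-order condition for the minimizers, strict convexity of the pointwise maximum via the maximizing component, endpoint derivative behavior to force $p_0\in(0,1)$, and positivity from $\log b_0(\ps) > 0$. The only cosmetic difference is your entropy-plus-linear decomposition of $\log b_j$, which the paper instead verifies by direct computation of the derivatives.
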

\begin{proof}
We start with the functions $\log b_j(p)$. 
By routine calculus, the first and second derivatives are 
\begin{equation}\label{eq:firstderivatives}
\begin{split}
    \frac{\partial}{\partial p} \log b_0(p) & =\log\lrpar{ \frac{p(1-p_1)(1-p_2)}{(1-p)p_1p_2}},\\
    \frac{\partial}{\partial p} \log b_i(p)& =\log\lrpar{ \frac{p(1-p_i)}{(1-p)p_i}} \phantom{\frac{1}{p(1-p)}} \quad \text{for~$i \in \{1,2\}$,}\\ %
    \frac{\partial^2}{\partial^2 p} \log b_j(p) & 
    = \frac{1}{p(1-p)} \phantom{\log\lrpar{ \frac{p(1-p_i)}{(1-p)p_i}}} \quad \text{for~$j \in \{0,1,2\}$.} %
\end{split}
\end{equation}
Note that the $\log b_j(p)$ with~$j \in \{0,1,2\}$ are all strictly convex functions for~$p\in [0,1]$, 
since each function is continuous on $[0,1]$ with a strictly positive second derivative for $p \in (0,1)$.
Note that the first derivatives~\eqref{eq:firstderivatives} of the~$\log b_j(p)$ are all negative near~$0$ and positive near~$1$. 
So by solving for the zeroes of the first derivative,  
it follows that $\log b_0(p)$, $\log b_1(p)$ and~$\log b_2(p)$ achieve their unique minima at $\ps$, $p_1$ and~$p_2$, respectively.

We now turn to the function~$g(p)$.
Letting $f_0(x) = \log b_0(x)$, $f_1(x) = 2\log b_1(x)$ and $f_2(x) = 2\log b_2(x)$, 
note that for any $a,b \in [0,1]$ and~$t \in (0,1)$ with~$a \neq b$, 
by strict convexity of the functions~$f_j(x)$ it follows~that
\begin{align*}
    g(t a+(1-t)b)&=\max_{j \in \{0,1,2\}} \{f_j(t a+(1-t)b)\}
    < \max_{j \in \{0,1,2\}} \{t f_j(a)+(1-t)f_j(b)\}
    \le t g(a)+(1-t) g(b),
\end{align*}
which establishes that~$g(p)$ is a strictly convex function for~$p\in [0,1]$.
As uniqueness of the minimizer~$p_0$ of~$g(p)$ over~$p \in [0,1]$ follows from strict convexity, it suffices to check that the minimum of $g(p)$ is not attained at the endpoints~${p \in \{0,1\}}$. 
This follows from the behavior of the first derivatives~\eqref{eq:firstderivatives} of the~$\log b_j(p)$ established above, which imply that~$g(p)$ is decreasing near~$0$ and increasing near~$1$. 
Finally, it remains to determine the range of~$g(p)$ for all~$p \in [0,1]$: the upper bound $g(p) \le \max\{g(0),g(1)\} <\infty$ follows from strict convexity of~$g(p)$ and the convention~$0\log 0=0$  (mentioned at the beginning of Appendix~\ref{sec:calcstuff}), 
and the lower bound $g(p)\ge \log b_0(p)\ge \log b_0(\ps) >0$ follows from 
the properties of~$\log b_0(p)$ established above.
\end{proof}

\begin{proof}[Proof of Lemma~\ref{lem:analytic}]
\ref{item:approximate}: 
From Lemma~\ref{lem:strictlyconvex} we know that $g:[0,1] \to (0,\infty)$ has a unique minimizer~$p_0 \in (0,1)$, 
and that~$g(p)$ is strictly convex, which together readily establishes Lemma~\ref{lem:analytic}~\ref{item:approximate}. 

\ref{item:bounded}: 
From Lemma~\ref{lem:strictlyconvex}, for any $p\in [0,1]$ we know that $b_0(p)\ge b_0(\ps)>1$ and $b_i(p)\ge b_i(p_i)\ge 1$ for~$i \in \{1,2\}$. 
This enables us to pick~$\eta=\eta(p_1,p_2)>0$ sufficiently small such that, for all sufficiently large~$N$, we~have
\begin{equation} \label{def:lambda}
(4\log_{b_0(\ps)}N+1) \cdot  (1+\eta)^{4\log_{b_0(\ps)}N} \le N \qquad \text{and} \qquad \log b_0(\ps) \ge 3\log (1+\eta).
\end{equation}
Setting~$\lambda := 1+\eta$, we first analyze properties of~$\xot_N(p)$. 
Since~$\log b_0(p) \ge \log b_0(\ps) >  \log \lambda > 0$, 
by inspecting the definition~\eqref{def:x_N} of $\xot_N(p)$ we infer that, for all sufficiently large~$N$ (depending only on~$\lambda$), we have
\begin{align} \label{eq:xot:bound}
\xot_N(p) & \le 4\log_{b_0(\ps)}N+1,\\
\label{eq:xot:asymp}
\xot_N(p) & = 4\log_{b_0(p)} N + O(\log \log N) , 
\end{align} 
where the asymptotic estimate in~\eqref{eq:xot:asymp} is uniform in~$p$, i.e., the implicit error term~$O(\log \log N)$ depends only on~$p_1,p_2$ (and not on~$p$). 
%
We next turn to~$x^{(i)}_N(p)$ with~$i\in \{1,2\}$, 
where ${b_i=b_i(p) \ge 1}$ holds.
Using the definition~\eqref{def:xi} it is easy to see that~$x^{(i)}_N(p) > 1$ holds for all~$N \ge 1$ (since~$x^{(i)}_N(p) \le 1$ implies that $e \le eN \le 1 \cdot b_i^{-(1-x^{(i)}_N(p))/2} \le 1$).
For technical reasons we now distinguish whether~${b_i=b_i(p)}$ is smaller or larger than~$\lambda$. 
We start with the case~${1 \le b_i < \lambda}$, 
where using~\eqref{def:xi} and~\eqref{def:lambda} we infer~that
\[ 
x^{(i)}_N(p) \cdot \lambda ^{(x^{(i)}_N(p)-1)/2} 
\ge 
eN>(4\log_{b_0(\ps)}N+1) \cdot  \lambda^{[(4\log_{b_0(\ps)}N+1)-1]/2} ,\]
which in turn, by noting that~$x\lambda^{(x-1)/2}$ is increasing for~$x \ge 1$, implies together with~\eqref{eq:xot:bound} that
\begin{equation} \label{eq:xiot:comparison}
x^{(i)}_N(p)> 4\log_{b_0(\ps)}N+1 \ge x^{(0)}_N(p) .
\end{equation}
In this case, gearing up towards~\eqref{eq:firstorderofmin}, using~\eqref{def:lambda} we also infer that 
\begin{equation} \label{eq:xiot:comparison:firstorder}
\frac{2\log N}{\log b_i(p)}\ge \frac{2\log N}{\log \lambda} >\frac{4\log N}{\log b_0(p)}.
\end{equation}
We next consider the case~$b_i\ge \lambda > 1$.
Applying a bootstrapping argument to the implicit definition~\eqref{def:xi} of $x^{(i)}_N(p)$, i.e., $eN=x^{(i)}_N(p) b_i^{(x^{(i)}_N(p)-1)/2}$, 
for all sufficiently large~$N$ (depending only on~$\lambda$) 
it follows that
\begin{equation}\label{eq:xifirstorder}
\begin{split}
     x^{(i)}_N(p)&=2\log_{b_i}N -2\log_{b_i} x^{(i)}_N(p)+2\log_{b_i} e +1\\
    &= 2\log_{b_i}N -2\log_{b_i}(2\log_{b_i}N -2\log_{b_i} x^{(i)}_N(p)+2\log_{b_i} e +1)+2\log_{b_i} (e)+1
        \\
    &= 2\log_{b_i}N -2\log_{b_i}\log_{b_i}N-2\log_{b_i} (2/e)+1 +O\lrpar{\frac{\log\log N}{\log N}} \\
    & = 2\log_{b_i}N + O(\log \log N) , 
\end{split}
\end{equation}
where all the asymptotic estimates in~\eqref{eq:xifirstorder} are uniform, i.e., do not depend on~$p$ (here we exploit that~${b_i \ge \lambda > 1}$ in this case).
In both cases,
by combining~\eqref{eq:xot:asymp} with~\eqref{eq:xiot:comparison}--\eqref{eq:xiot:comparison:firstorder} and~\eqref{eq:xifirstorder}, we obtain the uniform estimate 
\begin{equation}
\label{eq:xiot:comparison:firstorder:asymp}
\begin{split}
\min \bigcpar{\xot_N(p),\: x^{(i)}_N(p)} 
& = \min \left\{ \frac{4\log N}{\log b_0(p)},\: \frac{2\log N}{\log b_i(p)} \right\} + O(\log\log N) .
\end{split}
\end{equation}
Applying~\eqref{eq:xiot:comparison:firstorder:asymp} for both~$i \in \{1,2\}$ then establishes the desired  estimate~\eqref{eq:firstorderofmin} by definition of~$g(p)$, 
completing the proof of Lemma~\ref{lem:analytic}~\ref{item:bounded}. 
\end{proof}

\begin{proof}[Proof of Remark~\ref{rem:savingold}]
The definition of~$n_N$ is exactly as in the proof of Theorem~\ref{thm:maincommon}, 
so we only need to establish the asymptotic estimate~\eqref{def:y_N:asymp}.
Here the crux is that from~\eqref{eq:xiot:comparison} we know that $x^{(i)}_N(p) \leq x^{(0)}_N(p)$ implies~$b_i \ge \lambda$, 
so that estimate~\eqref{eq:xifirstorder} applies to~$x^{(i)}_N(p)$, 
which immediately establishes~\eqref{def:y_N:asymp}, as desired. 
\end{proof}

\subsection{Proofs of Corollary~\ref{cor:equalprob} and Lemma~\ref{lem:figcaption}}\label{sec:keylemma:apx}
\begin{proof}[Proof of Corollary~\ref{cor:equalprob}]
By assumption, Lemma~\ref{lem:figcaption}~\ref{enum:a} implies that $n_N=\xot_N(\ps)$ for all~$N$ large enough, 
Invoking Theorem~\ref{thm:maincommon}, it thus only remains to verify that~$\log b_0(\ps) > \max\{2\log b_1(\ps),2\log b_2(\ps)\}$ holds when~$p_1=p_2=p$. 
To this end, we start with the trivial inequality 
\[ \frac{p^4+(1-p)^4}{p^2+(1-p)^2}<p^2+(1-p)^2.\]
Since $\log(x)$ is a concave function, using~$\ps=\ps(p,p)=p^2/(p^2+(1-p)^2)$ and Jensen's inequality it follows~that
\[ \ps \log p^2 +(1-\ps) \log (1-p)^2 \le \log\lrpar{\frac{p^4+(1-p)^4}{p^2+(1-p)^2}} < \log\bigpar{p^2+(1-p)^2}.\]
Exponentiating both sides, we infer that
\[ p^{2\ps} (1-p)^{2(1-\ps)}< p^2+(1-p)^2.\]
Dividing both sides by~$(p^2+(1-p)^2)^2>0$, using~$p_1=p_2=p$ we conclude that
\[b_1(\ps)^2=b_2(\ps)^2= \lrpar{\frac{\ps}{p}}^{2\ps} \lrpar{\frac{1-\ps}{1-p}}^{2(1-\ps)}=\frac{p^{2\ps}(1-p)^{2(1-\ps)}}{(p^2+(1-p)^2)^2}< \frac{1}{p^2+(1-p)^2}=b_0(\ps), \]
which establishes that~$\log b_0(\ps) > \max\{2\log b_1(\ps),2\log b_2(\ps)\}$, 
as desired.
\end{proof}

\begin{proof}[Proof of Lemma~\ref{lem:figcaption}]
\ref{enum:a}: From Lemma~\ref{lem:strictlyconvex}, we know that $\log b_0(p)$ achieves its unique minimum at~$p=\ps$.
Our assumption implies that $g(\ps) = \log b_0(\ps)$. 
For any $p\in[0,1]$ we thus~have
\begin{equation*}
    g(p) \ge \log b_0(p)\ge \log b_0(\ps)= g(\ps),
\end{equation*}  
which establishes that $\ps$ is the unique (see Lemma~\ref{lem:strictlyconvex}) minimizer~$p_0$ of the function~$g(p)$.

It remains to prove that $n_N=\xot_N(\ps)$ for all large enough $N$. 
Our assumption implies $\tfrac{1}{4}
\log b_0(\ps)>\max\{\tfrac{1}{2}\log b_1(\ps), \tfrac{1}{2}\log b_2(\ps)\}$, 
which by comparing the first order terms in~\eqref{eq:xiot:comparison:firstorder:asymp}, \eqref{eq:xiot:comparison} and \eqref{eq:xifirstorder} implies 
\begin{equation}\label{eq:order:xot}
\xot_N(\ps)<\min\left\{\xone_N(\ps), \: \xtwo_N(\ps)\right\}
\end{equation}
for all~$N$ large enough (depending only on the functions~$\log b_j(\ps)$ and thus~$p_1,p_2$). 
Fix~$p\in [0,1]$ with~$p \neq \ps$. Setting~$r:=\log b_0(\ps) > 0$ and $\gamma:=\log b_0(p)-r >0$ 
(where~$\gamma > 0$ follows from~$p \neq \ps$, as $\ps$ is the unique minimizer of $\log b_0(p)$ by Lemma~\ref{lem:strictlyconvex}), 
using~${\log(1+\gamma/r) \le \gamma/r}$ it follows for all~$N$ large enough (depending only on~$\log b_0(\ps)$ and thus~$p_1,p_2$) that 
\begin{equation}\label{eq:order:xot:diff}
\begin{split}
        &\xot_N(\ps)-\xot_N(p)\\
        & = \frac{4\log N-2\log\log N-2\log(4/e)+2\log r}{r} - \frac{4\log N-2\log\log N-2\log(4/e)+2\log (r+\gamma)}{r+\gamma}\\
        &=\frac{\gamma}{r(r+\gamma)} \biggsqpar{4\log N-2\log\log N -2\log(4/e)-\frac{2r}{\gamma}\log\lrpar{1+\frac{\gamma}{r}} + 2\log r}\\
        & \ge \frac{\gamma}{r(r+\gamma)} \Bigsqpar{2\log N-2+2 \log \log b_0(\ps)} \ge \frac{\gamma \log N}{r(r+\gamma)} > 0 .
\end{split}
\end{equation}
Combining inequalities~\eqref{eq:order:xot} and~\eqref{eq:order:xot:diff}
with the definition~\eqref{def:nN} of~$n_N$, it then follows that
\[
\xot_N(\ps) = \min\left\{\xot_N(\ps), \: \xone_N(\ps), \: \xtwo_N(\ps) \right\} \le n_N \le \max_{p\in [0,1]} \xot_N(p) = \xot_N(\ps) .
\]
Hence~$n_N=\xot_N(\ps)$ for all large enough~$N$ (depending only on~$p_1,p_2$), 
which similar to the asymptotic reasoning for~\eqref{eq:xot:bound} implies~\eqref{eq:nN:x0} by the definition of $\xot_N(\ps)$, 
and thus completes the proof of Lemma~\ref{lem:figcaption}~\ref{enum:a}. 
    
\ref{enum:b}: 
First, we show that the solution~$\pz$ of~$\log b_0(p)=2\log b_i(p)$ is unique, and that~$\pz$ lies between~$\ps$ and~$p_i$, with~$\pz \neq p_i$. 
To this end we introduce the auxiliary function
\begin{align*}
    h(p):=2\log b_i(p)-\log {b_0}(p)= p \log \lrpar{\frac{pp_1p_2}{p_i^2}}+ (1-p)\log\lrpar{\frac{(1-p)(1-p_1)(1-p_2)}{(1-p_i)^2}}.
\end{align*}
Using~\eqref{eq:firstderivatives} we see that the second derivative is $h''(p)=1/[p(1-p)]$. 
Hence~$h(p)$ is a strictly convex function for~$p\in [0,1]$, 
since it is a continuous function on $[0,1]$ with a strictly positive second derivative for $p \in (0,1)$.
The above proof of Corollary~\ref{cor:equalprob} 
shows that $p_1=p_2$ falls in the previous case~\ref{enum:a}, so that we here have~$p_1 \neq p_2$. 
Consequently, $h(0)$ and $h(1)$ are both not equal to~$0$, and 
in particular have different signs (using~$0\log 0=0$, as mentioned at the beginning of Appendix~\ref{sec:calcstuff}). 
Using strict convexity it follows that~$h(p)$ has a unique zero in~$[0,1]$, which by construction is the unique solution~$\pz$ of~$\log b_0(p)=2\log b_i(p)$. 
Furthermore, since our assumptions imply $2\log b_i(\ps) \ge \log b_0(\ps)$ and~$p_1,p_2 \in (0,1)$, for~$j \in \{1,2\} \setminus \{i\}$ it follows that
\begin{equation*}
h(\ps)\ge 0 \qquad\text{and} \qquad h(p_i)=p_i\log p_j+(1-p_i) \log (1-p_j)<0.
\end{equation*}
which establishes that~$\pz$ lies between $\ps$ and $p_i$, with~$\pz \neq p_i$. 

Second, we claim that $f(p):=\max\{\log b_0(p), 2\log b_i(p)\}$ is decreasing in~$[0,\pz]$ and increasing in~$[\pz,1]$, which together with~$\log b_0(\pz)=2\log b_i(\pz)$ establishes the useful fact 
\begin{equation}\label{eq:f:optimizer}
\min_{p\in [0,1]}\max\bigcpar{\log b_0(p),\: 2\log b_i(p)}= 2\log b_i(\pz).
\end{equation}
By Lemma~\ref{lem:strictlyconvex}, $\log b_0(p)$ is decreasing in~$[0,\ps]$ and increasing in~$[\ps,1]$. 
Furthermore, $2\log b_i(p)$ is decreasing in~$[0,p_1]$ and increasing in~$[p_1,1]$. 
Hence~$f(p)$ is  increasing in~$[\max\{p_1,\ps\}, 1]$.
Furthermore, recalling that~$\pz$ lies between~$\ps$ and~$p_1$, it follows that in~$[\pz, \max\{p_1,\ps\}]$  one of~$\log b_0(p)$ or~$2\log b_i(p)$ is increasing, whereas the other one is decreasing. 
Since~$\log b_0(\pz) = 2\log b_i(\pz)$, it follows that $f(p)$ is increasing in~$[\pz, \max\{p_1,\ps\}]$, 
establishing that $f(p)$ is  increasing in~$[\pz,1]$. 
A similar argument shows that $f(p)$ is decreasing in~$[0,\pz]$.  

Third, we show that $\pz$ is the minimizer of~$g(p)$.
To see this, note that~$\log b_0(\pz)=2\log b_i(\pz)$ implies
\[ b_0(\pz) = b_0(\pz)  \cdot \frac{b_0(\pz)}{b_i(\pz)^2} 
=\lrsqpar{\lrpar{\frac{1}{p_j}}^p \lrpar{\frac{1}{ 1-p_j}}^{1-p}}^2 \ge b_j(\pz)^2  \]
for~$j \in \{1,2\} \setminus \{i\}$,
so that~$g(\pz)=f(\pz) = 2\log b_i(\pz)$. 
Using~\eqref{eq:f:optimizer}, for any~$p\in [0,1]$ it then follows~that
\[ g(p)\ge \max\bigcpar{\log b_0(p),\: 2\log b_i(p)} \ge  2\log b_i(\pz)=  g(\pz),\]
which establishes that $\pz$ is the unique (see Lemma~\ref{lem:strictlyconvex}) minimizer~$p_0$ of the function~$g(p)$. 

Finally, from Lemma~\ref{lem:analytic}~\ref{item:bounded} and~\eqref{eq:firstorderonN}, 
using~$p_0=\pz \in (0,1)$ and~$g(p_0)=2\log b_i(p_0)>0$ it follows that 
\[ n_N= \frac{4\log N}{g(p_0)} + O(\log\log N)= \xonetwo_N(p_0) + O(\log\log N) =(1+o(1)) 2\log_{b_i(p_0)}N , \]
which establishes~\eqref{eq:nN:x1} and thus 
completes the proof of Lemma~\ref{lem:figcaption}~\ref{enum:b}.
\end{proof}

\section{Proof of \refL{lem:typical}: Pseudorandom properties}\label{apx:pseudorandom}
\begin{proof}[Proof of \refL{lem:typical}]
We start with the binomial random~$G_{n,p}$, 
whose number of edges has a binomial distribution with expected value~$\tbinom{n}{2} \cdot p$. 
Using Chebyshev's inequality (or Chernoff bounds),
it easily follows~that 
\begin{equation}\label{eq:edges:gnp}
\Pr(G_{n,p} \not\in \cF_{n,p}) 
= o(1). 
\end{equation}
Furthermore, the estimate 
\begin{equation}\label{eq:asym:gnp:basic}
\Pr(G_{n,p} \text{ is not asymmetric}) \le e^{-\Theta(np(1-p))} 
\end{equation}
follows, for instance, by the proof of Theorem 3.1 in~\cite{kim2002asymmetry} when replacing the~$\eps$ in their proof by some small constant (their proof in fact works for any~$p=p(n)$ satisfying $np(1-p) \gg \log n$).  Note that the induced subgraph~$G_{n,p}[L]$ has the same distribution as~$G_{|L|,p}$. 
Taking a union bound over all possible vertex-subsets~$L \subseteq [n]$ of size~$\ell := |L| \ge n-n^{2/3}$,   
using~\eqref{eq:asym:gnp:basic} and~$\tbinom{n}{\ell}=\tbinom{n}{n-\ell} \le n^{n-\ell} \le n^{n^{2/3}}$ we~obtain 
\begin{equation}\label{eq:asym:gnp}
\Pr(G_{n,p} \not\in \cA_n) \le \sum_{n-n^{2/3} \le \ell \le n} \binom{n}{\ell}  \cdot e^{-\Theta(\ell p(1-p))} \le e^{-\Theta(np(1-p))} = o(1), 
\end{equation}
which in fact holds for any edge-probability~$p=p(n)$ satisfying $n^{1/3}p(1-p) \gg \log n$. 
Combining~\eqref{eq:edges:gnp} and~\eqref{eq:asym:gnp} establishes that~$G_{n,p}\in \cA_n \cap \cF_{n,p}$ whp. 

We next consider the  uniform random graph~$G_{n,m}$. Using Pittel's inequality (see~\cite[Theorem~2.2]{BB}) with~$p'=p'(n):=m/\tbinom{n}{2} \in [\gamma,1-\gamma]$, it routinely follows from inequality~\eqref{eq:asym:gnp} that
\begin{equation}\label{eq:asym:gnm}
\Pr(G_{n,m} \not\in \cA_n) \le 3 \sqrt{m} \cdot \Pr(G_{n,p'} \not\in \cA_n) \le O(n) \cdot  e^{-\Theta(np'(1-p'))}  = o(1). 
\end{equation}
Turning to~$\cE_{n,m}$, 
fix any~$L \subseteq [n]$ and write~$\ell := |L|$ for the size of~$L$, as before. 
To estimate the number of edges contained in the induced subgraph~$G_{n,m}[L]$, 
we use that by definition of~$G_{n,m}$ we have
\begin{equation}\label{eq:edge-decomp}
\underbrace{|E(G_{n,m})|}_{=m}
= |E(G_{n,m}[L])| + \underbrace{|E(G_{n,m}) \setminus E(G_{n,m}[L])|}_{=: X_L}.
\end{equation}
For~$L=[n]$ this already  gives~$|E(G_{n,m}[L])|=\tbinom{|L|}{2}m/\tbinom{n}{2}$, 
so it remains to consider the case~${L \subsetneq [n]}$.
The random variable~$X_L$ has a hypergeometric distribution 
with parameters~$K:=\tbinom{n}{2}$, $m= \Theta(n^2)$ and 
\begin{equation*}
k_{\ell}:=\binom{n}{2}-\binom{\ell}{2} = \frac{(n-\ell)(n+\ell-1)}{2},
\end{equation*}
with expected value $\E X_L=k_{\ell} \cdot m/K = \Theta(n(n-\ell))$, 
since we have~$k_{\ell}$ random draws (without replacement) out of a total of $K=\tbinom{n}{2}$ potential edges, 
where~$m \in {[\gamma \tbinom{n}{2},(1-\gamma)\tbinom{n}{2}]}$ out of the potential edges are present. 
Since standard Chernoff bounds for binomial random variables with expected value~$k_{\ell} \cdot m/K$
also apply to~$X_L$, see~\mbox{\cite[Theorem~2.1 and~2.10]{JLR}},
it routinely follows that 
\[
\Pr\bigpar{|X_L-k_{\ell} m/K| \ge n^{2/3}(n-\ell)} \le 2 \exp\lrpar{\frac{-n^{4/3}(n-\ell)^2}{3 \E X_L}} \le e^{-\Theta(n^{1/3}(n-\ell))} . 
\]
Writing~$\cE'_{n,m}$ for the event that~$|X_L-k_{\ell} m/K| < n^{2/3}(n-\ell)$ for all~$L \subsetneq [n]$, using a standard union bound argument and $\tbinom{n}{\ell}=\tbinom{n}{n-\ell}\le n^{n-\ell}$ it readily follows that 
\begin{equation}\label{eq:asym:Bnm}
\Pr(G_{n,m} \not\in \cE'_{n,m}) \le \sum_{0 \le \ell \le n-1}\binom{n}{\ell} \cdot e^{-\Theta(n^{1/3}(n-\ell))} = o(1) .
\end{equation}
Combining~\eqref{eq:asym:gnm} and~\eqref{eq:asym:Bnm} establishes that~$G_{n,m} \in \cA_n \cap \cE'_{n,m}$ whp. 
It remains to show that~$G_{n,m} \in \cE'_{n,m}$ implies~$G_{n,m} \in \cE_{n,m}$. 
To see this note that, for any~$L \subsetneq [n]$, equation~\eqref{eq:edge-decomp} and~$G_{n,m} \in \cE'_{n,m}$ imply
\begin{equation*}
|E(G_{n,m}[L])| = m - X_L = m  - \frac{\bigsqpar{\binom{n}{2}-\binom{|L|}{2}}m}{\binom{n}{2}} \pm n^{2/3}(n-|L|) = \frac{\tbinom{|L|}{2}m}{\binom{n}{2}} \pm n^{2/3}(n-|L|),
\end{equation*}
which completes the proof of \refL{lem:typical} (since this edge-estimate is trivially true for~$L=[n]$, as discussed). 
\end{proof}

\end{document}